\let\hat=\widehat
\let\tilde=\widetilde
\numberwithin{equation}{subsection}
\newtheorem{soustheorem}[equation]{Théorème} 
\newtheorem{theorem}[]{Théorème} 
\newtheorem{proposition}[equation]{Proposition}
\newtheorem{lemme}[equation]{Lemme}
\newtheorem{corollaire}[equation]{Corollaire}
\let\leq\leqslant
\let\geq\geqslant
\theoremstyle{remark}
\DeclareMathOperator{\CK}{CK}
\DeclareMathOperator{\codim}{codim}
\DeclareMathOperator{\DR}{DR}
\DeclareMathOperator{\irr}{irr}
\DeclareMathOperator{\reg}{reg}
\DeclareMathOperator{\Sh}{Sh}
\DeclareMathOperator{\Supp}{Supp}
\DeclareMathOperator{\Diag}{Diag}
\DeclareMathOperator{\rg}{rg}
\DeclareMathOperator{\car}{car}
\def\cartesien{\ar@{}[rd]|{\square}}
\DeclareMathOperator{\Irr}{Irr}
\DeclareMathOperator{\hol}{hol}
\DeclareMathOperator{\sing}{sing}
\title{Sur une caractérisation des $\mathcal{D}$-modules holonomes réguliers}
\author[J.-B.~Teyssier]{Jean-Baptiste Teyssier\thanks{Ce travail a été financé par la fondation Einstein, ainsi que par la bourse ERC 226257.}}
\date{}
\curraddr{Freie Universität Berlin, Mathematisches Institut, Arnimallee 3, 14195 Berlin, Germany}
\email{teyssier@zedat.fu-berlin.de}
\begin{document}

\maketitle

%\subsection{Définitions et propriétés}

%\subsection{Deux théorèmes de Mebkhout}
%\begin{theorem}\label{theoremMeb1}
%Soit $X$ est une variété complexe lisse et $\mathcal{M}$ un $\mathcal{D}_{X}$-module holonome sur $X$. Alors, $\mathcal{M}$ est régulier si et seulement si $\Irr^{\ast}_{x}(\mathcal{M})\simeq 0$ pour tout $x\in X$.
%\end{theorem}
%\begin{theorem}\label{theoremMeb2}
%\end{theorem}
\section*{Introduction}
Soit $X$ une variété complexe lisse et $\mathcal{M}_1$,$\mathcal{M}_2$ deux objets de la catégorie dérivée $D^{b}_{\hol}(\mathcal{D}_X)$ formée des complexes de $\mathcal{D}_X$-modules à cohomologie bornée et holonome. On note $\Sh_{\mathds{C}}(X)$ la catégorie des faisceaux en $\mathds{C}$-espaces vectoriels sur $X$ et $D^{b}_{c}(X,\mathds{C})$ la catégorie dérivée formée des complexes d'objets de $\Sh_{\mathds{C}}(X)$ à cohomologie bornée et constructible. \\ \indent
Traditionnellement \cite[6.1.1]{KashKawai}\cite[2.1.1]{Mebuneautre}, la pleine fidélité de la correspondance de \- Riemann-Hilbert se démontre en prouvant que le morphisme 
\begin{equation}\label{RH}
\xymatrix{
RH_{\mathcal{M}_1,\mathcal{M}_2}: R\mathcal{H}om_{\mathcal{D}_X}(\mathcal{M}_1,\mathcal{M}_2)\ar[r]     & R\mathcal{H}om_{\Sh_{\mathds{C}}(X)}(\mathbf{S}(\mathcal{M}_2),\mathbf{S}(\mathcal{M}_1))
}
\end{equation}
est un isomorphisme de $D^{b}_{c}(X,\mathds{C})$ dès que les complexes $\mathcal{M}_1$ et $\mathcal{M}_2$ sont à cohomologie régulière. Le but de cet article est de prouver le 
\begin{theorem}\label{theo1}
Soit $\mathcal{M}\in D^{b}_{\hol}(\mathcal{D}_X)$. Si le morphisme de Riemann-Hilbert $RH_{\mathcal{M},\mathcal{M}}$ est un isomorphisme, alors $\mathcal{M}$ est à cohomologie régulière.
\end{theorem}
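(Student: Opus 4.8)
The plan is to reduce the statement to a local computation on a curve, where the failure of $RH_{\mathcal{M},\mathcal{M}}$ to be an isomorphism can be read off from irregularity numbers. Since regularity is local and can be tested on the cohomology sheaves $\mathcal{H}^{k}(\mathcal{M})$, and since a holonomic module is regular if and only if its restriction to suitable curves is regular (irregularity along a divisor $Z\subset X$ being detected by the restriction to a generic curve transverse to $Z$), it suffices to treat the case $\dim X=1$. I would argue by contraposition: assuming $\mathcal{M}$ not regular, choose a component $Z$ of the irregularity locus and a germ of curve $i\colon(\Delta,0)\to X$ transverse to $Z$ at a generic smooth point, so that $i^{*}\mathcal{M}$ is irregular at $0$. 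The point is that for such a non-characteristic $i$ the functors $\DR$, $\mathbf{S}$, $R\mathcal{H}om_{\mathcal{D}}(-,-)$ and the topological $R\mathcal{H}om$ all commute with $i^{*}$ up to shift, and the morphism $RH$ is natural for this restriction; hence $RH_{i^{*}\mathcal{M},i^{*}\mathcal{M}}$ is again an isomorphism. One is reduced to proving the theorem on $\Delta$, and there, after splitting off the point-supported (hence regular) summands by dévissage, to the case of a single meromorphic connection $\mathcal{M}$ with a singularity at $0$.

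On the curve I identify the two sides: $R\mathcal{H}om_{\mathcal{D}_{\Delta}}(\mathcal{M},\mathcal{M})\simeq\DR(\mathcal{E}nd\,\mathcal{M})$, where $\mathcal{E}nd\,\mathcal{M}=\mathbb{D}\mathcal{M}\otimes^{L}\mathcal{M}$ is the internal endomorphism module, while $R\mathcal{H}om_{\Sh_{\mathds{C}}(X)}(\mathbf{S}\mathcal{M},\mathbf{S}\mathcal{M})\simeq R\mathcal{H}om(\DR\mathcal{M},\DR\mathcal{M})$ by Verdier duality. Both agree on $\Delta^{*}$, so the obstruction is concentrated at $0$; since an isomorphism forces equality of the stalk Euler–Poincaré characteristics at $0$, it is enough to compute these. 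For the right-hand side I would use that on a curve every contribution coming from the punctured disk dies in Euler characteristic (the cohomology of a local system on $S^{1}$ has vanishing Euler characteristic), leaving only the contribution of the origin and yielding $\chi\big(R\mathcal{H}om(\DR\mathcal{M},\DR\mathcal{M})\big)_{0}=\chi(\mathbf{S}\mathcal{M})_{0}^{2}$. By the Komatsu–Malgrange index theorem $\chi(\mathbf{S}\mathcal{M})_{0}=-\Irr_{0}(\mathcal{M})$, while the same index theorem applied to the localized connection $\mathcal{E}nd\,\mathcal{M}$ gives $\chi\big(\DR(\mathcal{E}nd\,\mathcal{M})\big)_{0}=-\Irr_{0}(\mathcal{E}nd\,\mathcal{M})$.

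Combining the two computations, the discrepancy between the stalks at $0$ of source and target of $RH_{\mathcal{M},\mathcal{M}}$ is, in Euler characteristic,
\begin{equation*}
\chi\big(R\mathcal{H}om(\mathbf{S}\mathcal{M},\mathbf{S}\mathcal{M})\big)_{0}-\chi\big(R\mathcal{H}om_{\mathcal{D}}(\mathcal{M},\mathcal{M})\big)_{0}=\Irr_{0}(\mathcal{M})^{2}+\Irr_{0}(\mathcal{E}nd\,\mathcal{M}).
\end{equation*}
This quantity is the sum of a square and a nonnegative irregularity, hence $\geq 0$, and it vanishes exactly when $\Irr_{0}(\mathcal{M})=0$, that is, when $\mathcal{M}$ is regular at $0$. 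If $RH_{\mathcal{M},\mathcal{M}}$ is an isomorphism the left-hand side is $0$, forcing $\Irr_{0}(\mathcal{M})=0$ and contradicting the irregularity of $i^{*}\mathcal{M}$; this proves the theorem. The self-pairing $R\mathcal{H}om(\mathbf{S}\mathcal{M},\mathbf{S}\mathcal{M})$ being literally a square at the level of Euler characteristics is what rules out the a priori danger of cancellation in the diagonal case.

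I expect the main obstacle to lie in the reduction machinery rather than in the final positivity: one must check carefully that the hypothesis ``$RH_{\mathcal{M},\mathcal{M}}$ is an isomorphism'' is inherited by non-characteristic restriction to a transversal curve and survives the dévissage to a single connection, i.e. that the interaction terms between the irregular part and the regular (in particular point-supported) parts contribute equally to both sides and so drop out of the discrepancy. Once this compatibility and the two index theorems are in place, the positivity of irregularity makes the conclusion immediate.
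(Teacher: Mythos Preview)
Your curve computation is essentially correct and in fact recovers precisely the quantity the paper finds: the obstruction on a disk is $\irr_0(\mathcal{M})^{2}+\irr_0(\mathcal{E}nd\,\mathcal{M})$, a sum of nonnegative terms vanishing only for regular $\mathcal{M}$. The gap lies exactly where you suspect, in the reduction, and it is not a matter of care but of principle.

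Your reduction says: if $\mathcal{M}$ is irregular, pick a component $Z$ of the irregularity locus and a curve $i:(\Delta,0)\to X$ transverse to $Z$ at a generic point, non-characteristic for $\mathcal{M}$, $\mathcal{M}^{\vee}$ and $\mathcal{M}\otimes^{L}\mathcal{M}^{\vee}$, so that the hypothesis transfers to $i^{+}\mathcal{M}$. The trouble is that such a curve need not exist. Take $\mathcal{M}=i_{C+}\mathcal{N}$ for a curve $C\subset X$ and $\mathcal{N}$ irregular at a point $p\in C$: the irregularity locus is $\{p\}$, and $\car\mathcal{M}=T^{*}_{C}X$ fills an $(n-1)$-dimensional piece of the cotangent fibre over $p$; any curve through $p$ has an $(n-1)$-dimensional conormal there, so no curve through $p$ is non-characteristic for $\mathcal{M}$. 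More generally, the ``curves detecting irregularity'' in the fundamental criterion are not required to be non-characteristic, while transfer of the $RH$ hypothesis via Cauchy--Kowalevski needs precisely that; the two families of curves need not overlap.

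The paper circumvents this by never restricting to a curve directly. One first uses non-characteristic \emph{hypersurfaces} (which do exist through generic points) and induction on $\dim X$ to reduce to $\mathcal{M}$ regular away from a single point $x_{0}$. At $x_{0}$ one then applies Mebkhout's descending induction via algebraic local cohomology along a chain of hypersurfaces to produce a (possibly singular) curve $C\ni x_{0}$ with $R\mathcal{M}(\ast C)$ regular, and transfers the vanishing of $\Irr^{*}_{\Delta_X}$ to $R\Gamma_{[C]}\mathcal{M}^{\vee}\boxtimes R\Gamma_{[C]}\mathcal{M}$. Two further difficulties then arise which your sketch does not address: these two complexes are \emph{not} dual to one another (local cohomology does not commute with duality), so the curve result must be proved for an arbitrary pair $(\mathcal{M}_{1},\mathcal{M}_{2})$, not just $(\mathcal{M}^{\vee},\mathcal{M})$; and they are genuine complexes, so one needs an amplitude estimate for $\Irr^{*}_{\Delta_X}(\mathcal{H}^{i}\mathcal{M}_{1}\boxtimes\mathcal{H}^{j}\mathcal{M}_{2})$ (resting on Mebkhout's perversity theorem for the irregularity sheaf) to force the relevant spectral sequence to degenerate and pass to individual cohomology sheaves. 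Only after a normalization argument does one land on a smooth disk with two honest holonomic \emph{modules}, where a computation equivalent to yours concludes.
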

\noindent
Il s'agit d'une caractérisation de la régularité qui, bien qu'invé\-rifiable en pratique, a  certaines conséquences méta-mathématiques qui mériteraient d'être approfondies, comme par exemple le fait que si l'on cherche à définir une notion d'objet régulier dans quelque contexte que ce soit, il faut d'abord chercher à définir un analogue au foncteur solution. Pour un exemple formellement proche des $\mathcal{D}$-modules, on peut mentionner  \cite{EK}. \\ \indent
Détaillons la stratégie de la preuve du théorème \ref{theo1}. Notons $\Delta_X$ la diagonale de $X\times X$ et désignons par $\Irr^{\ast}_{\Delta_X}: D^{b}_{\hol}(\mathcal{D}_{X}) \longrightarrow D^{b}_{c}(X,\mathds{C})$  le foncteur d'irrégularité \cite{Mehbgro} le long de $\Delta_X$. Notons enfin $\mathcal{M}^{\vee}$ le complexe dual de $\mathcal{M}$ et pour $\mathcal{M}_1$,$\mathcal{M}_2\in D^{b}_{\hol}(\mathcal{D}_X)$, notons $\mathcal{M}_{1}\boxtimes \mathcal{M}_{2}$ le produit tensoriel externe de $\mathcal{M}_{1}$ et $\mathcal{M}_{2}$. \\ \indent L'analyse\footnote{Entièrement due à Mebkhout, et rappelée ici en partie \ref{Rappel}.} du cône de $RH_{\mathcal{M}_1,\mathcal{M}_2}$ permet de voir que le théorème \ref{theo1} est équivalent au 
\begin{theorem}\label{theo2prime}
Si $\Irr^{\ast}_{\Delta_X}(\mathcal{M} \boxtimes \mathcal{M}^{\vee})$ est l'objet nul de $D^{b}_{c}(X,\mathds{C})$, alors $\mathcal{M}$ est à cohomologie régulière.
\end{theorem}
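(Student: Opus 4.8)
La stratégie est de raisonner par contraposée : on suppose $\mathcal{M}$ à cohomologie non régulière et l'on cherche à établir que $\Irr^{\ast}_{\Delta_X}(\mathcal{M}\boxtimes\mathcal{M}^{\vee})$ n'est pas l'objet nul. La première étape est de se ramener à un unique module holonome. La formule de Künneth pour le produit externe donne, pour chaque entier $m$, une décomposition
\begin{equation*}
\mathcal{H}^{m}(\mathcal{M}\boxtimes\mathcal{M}^{\vee})\simeq\bigoplus_{k+l=m}\mathcal{H}^{k}(\mathcal{M})\boxtimes\mathcal{H}^{l}(\mathcal{M}^{\vee}),
\end{equation*}
où la dualité holonome identifie $\mathcal{H}^{l}(\mathcal{M}^{\vee})$ au dual $\mathcal{H}^{-l}(\mathcal{M})^{\vee}$. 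Comme $\Irr^{\ast}_{\Delta_X}$ est exact et envoie, d'après le théorème de positivité de Mebkhout, tout module holonome sur un faisceau pervers (à décalage près) à cycle caractéristique effectif, il est $t$-exact pour la $t$-structure perverse ; ses faisceaux de cohomologie perverse sur $\mathcal{M}\boxtimes\mathcal{M}^{\vee}$ sont donc les $\Irr^{\ast}_{\Delta_X}\mathcal{H}^{m}(\mathcal{M}\boxtimes\mathcal{M}^{\vee})$, et l'effectivité des cycles caractéristiques interdit toute compensation, que ce soit entre les degrés $m$ ou entre les facteurs de la somme directe. Il suffit alors de fixer un module holonome irrégulier $\mathcal{N}$ et de montrer que $\Irr^{\ast}_{\Delta_X}(\mathcal{N}\boxtimes\mathcal{N}^{\vee})\neq 0$, la contribution cherchée devant provenir des termes diagonaux $\mathcal{H}^{k}(\mathcal{M})\boxtimes\mathcal{H}^{k}(\mathcal{M})^{\vee}$.

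La deuxième étape ramène au cas $\dim X=1$. La régularité se testant sur les courbes, l'irrégularité de $\mathcal{N}$ fournit une hypersurface $Z$, un point lisse générique de $Z$ au voisinage duquel $\mathcal{N}$ est irrégulier, puis une courbe lisse $C$ transverse à $Z$ en ce point telle que la restriction non caractéristique $\mathcal{N}|_{C}$ reste irrégulière. Le produit externe et la dualité commutant à une telle restriction, et puisque $\Delta_X\cap(C\times C)=\Delta_C$ et que le foncteur d'irrégularité commute lui aussi à la restriction non caractéristique, $\Irr^{\ast}_{\Delta_X}(\mathcal{N}\boxtimes\mathcal{N}^{\vee})$ se restreint en $\Irr^{\ast}_{\Delta_C}(\mathcal{N}|_{C}\boxtimes(\mathcal{N}|_{C})^{\vee})$ ; il suffit d'en prouver la non-nullité.

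On se place alors sur un disque $C$ de coordonnée $t$ et de point singulier $0$, avec $\mathcal{N}$ irrégulier en $0$. Après un revêtement ramifié, le théorème de Levelt--Turrittin décompose le formalisé de $\mathcal{N}$ en $\bigoplus_{i}E^{\varphi_{i}}\otimes R_{i}$, les $R_{i}$ étant réguliers et l'un au moins des $\varphi_{i}$ ayant un pôle de pente $r_{i}>0$. Sur $C\times C$, de coordonnées $(t_{1},t_{2})$ et de diagonale $\{t_{1}=t_{2}\}$, les facteurs exponentiels de $\mathcal{N}\boxtimes\mathcal{N}^{\vee}$ sont les $\varphi_{i}(t_{1})-\varphi_{j}(t_{2})$. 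En restreignant à une droite $\{t_{2}=\lambda t_{1}\}$ transverse à la diagonale, le facteur diagonal $\varphi_{i}(t_{1})-\varphi_{i}(\lambda t_{1})$ reste de pente $r_{i}$ pour $\lambda$ générique ; la restriction de $\mathcal{N}\boxtimes\mathcal{N}^{\vee}$ à cette transversale est donc irrégulière en $0$, son nombre d'irrégularité étant une somme de pentes positives dont l'une au moins est strictement positive. La compatibilité de $\Irr^{\ast}$ à la restriction non caractéristique donne alors $\Irr^{\ast}_{\Delta_C}(\mathcal{N}|_{C}\boxtimes(\mathcal{N}|_{C})^{\vee})_{0}\neq 0$, ce qui conclut.

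Le point véritablement délicat sera d'exclure toute compensation : les facteurs croisés $\varphi_{i}(t_{1})-\varphi_{j}(t_{2})$ avec $i\neq j$ et les parties régulières pourraient a priori contribuer de façon à annuler le terme diagonal. C'est ici qu'interviendra de manière essentielle la positivité de Mebkhout --- sur une courbe le nombre d'irrégularité est une somme de pentes positives ou nulles, donc aucune compensation n'est possible, et en dimension supérieure l'effectivité des cycles caractéristiques des complexes d'irrégularité joue le même rôle, aussi bien pour passer d'un complexe à ses modules de cohomologie que pour recoller les transversales. Les vérifications techniques restantes --- bonne structure formelle en plusieurs variables, traitement de la ramification et choix d'une transversale non caractéristique --- constitueront l'essentiel du travail.
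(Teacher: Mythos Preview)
Your proposal contains two genuine gaps.

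\textbf{First gap (réduction aux modules).} Your claim that $\Irr^{\ast}_{\Delta_X}$ is $t$-exact for the perverse $t$-structure rests on Mebkhout's perversity theorem, but that theorem applies to $\Irr^{\ast}_Z$ only when $Z$ is a \emph{hypersurface}. The diagonal $\Delta_X\subset X\times X$ has codimension $d=\dim X$, so is a hypersurface only when $d=1$. For $d>1$ there is no reason for $\Irr^{\ast}_{\Delta_X}(\mathcal{N})$ to be perverse up to shift, and in fact the paper establishes (Proposition~\ref{lemme0}) that for modules supported on a curve it is concentrated in \emph{two} ordinary degrees $2d-1$ and $2d$, not one. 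This two-degree amplitude is what makes the hypercohomology spectral sequence degenerate at $E_2$ (Lemma~\ref{endroelsem}) and permits the passage from the complex to its cohomology modules --- but only after the reduction to curve support has been carried out. Your argument inverts this order and invokes a perversity statement that does not hold.

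\textbf{Second gap (le cas de la courbe).} Your computation of $\Irr^{\ast}_{\Delta_C}(\mathcal{N}\boxtimes\mathcal{N}^{\vee})_{(0,0)}$ proceeds by restricting to a line $\{t_2=\lambda t_1\}$ through the origin and appealing to the compatibility of $\Irr^{\ast}$ with non-characteristic restriction (Proposition~\ref{restrcar}). But that proposition requires the line to be non-characteristic for $\mathcal{N}\boxtimes\mathcal{N}^{\vee}$. Since $\mathcal{N}$ is singular at $0$, the characteristic variety of $\mathcal{N}\boxtimes\mathcal{N}^{\vee}$ contains the full cotangent fibre $T^{\ast}_{(0,0)}(C\times C)$, and \emph{no} curve through $(0,0)$ is non-characteristic. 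The restriction to a transversal line therefore does not compute the diagonal irregularity. The paper's treatment of the curve case (Proposition~\ref{uneprop}) avoids this entirely: it works directly with the triangles \eqref{irrdiagetponct} and \eqref{triangle2} and a dimension count involving the formal decomposition $\hat{\mathcal{M}}=\hat{\mathcal{M}}^{\reg}\oplus\hat{\mathcal{M}}^{\irr}$ (Lemma~\ref{majorationn}), rather than via restriction.

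More broadly, the paper's architecture is different from yours: it first uses non-characteristic \emph{hypersurface} sections and induction on $\dim X$ to reduce to the case where $\mathcal{M}$ is regular away from a single point $x_0$ (\S\ref{regensdisct}); then applies Mebkhout's fundamental criterion iteratively to descend to a complex supported on a curve through $x_0$ (\S\ref{reduction}); then passes to the normalisation of that curve (\S\ref{reddim1}), where the diagonal finally \emph{is} a hypersurface and perversity applies. Your shortcut of restricting directly to a curve and invoking perversity on $X\times X$ bypasses exactly the steps that make these tools available.
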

Pour prouver ce théorème, on raisonne par récurrence sur la dimension de $X$. le cas où $X$ est une courbe est traité en \ref{cascourbe} via une analyse détaillée des triangles \eqref{irrdiagetponct} et \eqref{triangle2} calculant l'irrégularité diagonale.\\ \indent
En dimension $>1$, on commence en \ref{regensdisct} par se ramener au cas où $\mathcal{M}$ est régulier en dehors d'un point $x_0$.  Cela se fait via un résultat \ref{restrcar} de commutation de $\Irr^{\ast}$ avec la restriction non-caractéristique.\\ \indent
A l'aide d'une technique de récurrence descendante sur la dimension du support due à Mebkhout, on prouve en \ref{reduction}  l'existence d'une courbe $C$ (éventuellement singulière en $x_0$) qui est telle que la régularité de $\mathcal{M}$ équivaut à la régularité de l'un des complexes $\mathcal{M}_1:=R\Gamma_{[C]}\mathcal{M}^{\vee}$ et $\mathcal{M}_2:=R\Gamma_{[C]}\mathcal{M}$. La clé de voûte de cette technique est la généralisation au cas des modules holonomes du critère de Deligne \cite{Del} stipulant que pour tester la régularité d'une connexion méromorphe, il suffit de le faire génériquement le long du lieu des pôles. C'est le \textit{critère fondamental de la régularité} \cite[4.3-16]{Mehbsmf}.
Dans le contexte des modules holonomes, ce résultat n'est efficient que dans le cas où le module étudié est régulier sur un ouvert assez gros et à support de dimension $>1$.\\ \indent
Pour démontrer que $\mathcal{M}_1$ ou $\mathcal{M}_2$ est régulier, on fait la distinction entre les composantes de $C$ le long desquelles $\mathcal{M}_1$ admet un module de cohomologie irrégulier  et les composantes le long desquelles tous les $\mathcal{H}^{i}\mathcal{M}_1$ sont réguliers. Ces dernières composantes sont non pertinentes pour l'étude de la régularité de $\mathcal{M}_1$. On montre en \ref{elimination} qu'on peut toujours les éliminer du problème, et de même avec $\mathcal{M}_2$. On raisonne alors par l'absurde: si les composantes de $C$ ne peuvent pas être toutes éliminées, c'est que $C$ admet une composante le long de laquelle deux modules de cohomologie $\mathcal{H}^{i}\mathcal{M}_1$ et  $\mathcal{H}^{j}\mathcal{M}_2$ sont irréguliers.\\ \indent
Par argument  de normalisation, on aboutit en \ref{reddim1} à une situation on l'on dispose de deux modules holonomes irréguliers $\mathcal{N}_1$ et $\mathcal{N}_2$ définis sur un voisinage de $0$ dans $\mathds{C}$ et tels que $\Irr^{\ast}_{\Delta_{\mathds{C}}}(\mathcal{N}_1 \boxtimes \mathcal{N}_2)\simeq 0$. Cette situation est contradictoire avec l'analyse menée en \ref{cascourbe}, ce qui conclut la preuve du théorème \ref{theo2prime}. Un ingrédient essentiel du passage de $C$ à sa normalisation est une estimation \ref{lemme0} de l'amplitude de l'irrégularité diagonale pour un produit tensoriel externe de modules à support une courbe. Cette estimation repose de façon cruciale sur le théorème de perversité du faisceau d'irrégularité le long d'une hypersurface \cite[2.1.6]{Mehbgro}.  
\\ \indent

Je remercie Claude Sabbah pour de multiples discussions concernant ce travail. Je remercie aussi Hélène Esnault pour ses encouragements durant l'élaboration de cet article, ainsi qu'Yves Laurent et Stéphane Guillermou pour m'avoir signalé une erreur dans une version antérieure de ce texte.

\section{Notations et rappels}
Les résultats apparaissant dans cette section seront utilisés à plusieurs reprises dans la suite de ce texte. On les récapitule ici une fois pour toute. La lettre $X$ désigne une variété complexe lisse et sauf mention du contraire,  $\mathcal{M}$ désigne un objet de $D^{b}_{\hol}(\mathcal{D}_X)$.  On dira que  $\mathcal{M}$ est \textit{régulier} si ses modules de cohomologie sont réguliers. Enfin, on notera $\car \mathcal{M}$ la \textit{variété caractéristique} de $\mathcal{M}$.
\subsection{}\label{commutationsolbox}
Notons $\DR:D^{b}_{\hol}(\mathcal{D}_X) \longrightarrow D^{b}_{c}(X,\mathds{C})$ le \textit{foncteur de De Rham} et désignons par $\mathbf{S}:D^{b}_{\hol}(\mathcal{D}_X) \longrightarrow D^{b}_{c}(X,\mathds{C})$ le \textit{foncteur solution}. On rappelle que si 
$\mathcal{M}_1,\mathcal{M}_2\in  D^{b}_{\hol}(\mathcal{D}_X)$, on a suivant \cite[1.4.3]{KashKawai}  une identification canonique
$$
\mathbf{S}(\mathcal{M}_{1}\boxtimes \mathcal{M}_{2})\overset{\sim}{\longrightarrow}
\mathbf{S}(\mathcal{M}_{1})\boxtimes_{\mathds{C}} \mathbf{S}(\mathcal{M}_{2})
$$
\subsection{}Pour tout morphisme $f:Y\longrightarrow X$ avec $Y$ variété complexe lisse, on note $f^{+}:D^{b}_{\hol}(\mathcal{D}_X)\longrightarrow D^{b}_{\hol}(\mathcal{D}_Y)$ et $f_{+}:D^{b}_{\hol}(\mathcal{D}_Y)\longrightarrow D^{b}_{\hol}(\mathcal{D}_X)$ les foncteurs image inverse et image directe pour les $\mathcal{D}$-modules. On notera $f^{\dag}$ pour $f^{+}[\dim Y-\dim X]$.
\subsection{}\label{localisation}
Pour tout sous-espace analytique fermé $Z$ de $X$, on note $i_Z:Z\hookrightarrow X$ l'inclusion canonique. Suivant \cite[IV]{MT}, on dispose du \textit{triangle de cohomologie locale algébrique}
\begin{equation}\label{ocholocal}
\xymatrix{
R\Gamma_{[Z]}\mathcal{M}\ar[r]&  \mathcal{M} \ar[r]& R\mathcal{M}(\ast Z)\ar[r]^-{+1}& 
}
\end{equation}
Il s'agit d'un triangle distingué de $D^{b}_{\hol}(\mathcal{D}_X)$.
Dans ce triangle, le complexe $R\Gamma_{[Z]}\mathcal{M}$ est \textit{la cohomologie locale algébrique} de $\mathcal{M}$ le long de $Z$ et $R\mathcal{M}(\ast Z)$ est la \textit{localisation} de $\mathcal{M}$ le long de $Z$. \\ \indent
Pour tout morphisme $f:Y\longrightarrow X$, on a un isomorphisme canonique de triangles distingués  \cite[IV.4.1]{MT}
\begin{equation}\label{cohlociminv}
\xymatrix{
 f^{+}R\Gamma_{[Z]}\mathcal{M}\ar[r] \ar[d]^-\wr&   f^{+}\mathcal{M}   \ar[r] \ar[d]^-\wr&   f^{+}R\mathcal{M}(\ast Z)  \ar[d]^-\wr \\
R\Gamma_{[f^{-1}(Z)]}f^{+}\mathcal{M}    \ar[r] &      f^{+}\mathcal{M}        \ar[r] &   R(f^{+}\mathcal{M})(\ast f^{-1}(Z))
}
\end{equation}
Si de plus $Z$ est lisse, on a une identification canonique \cite[IV.5.1]{MT} 
\begin{equation}\label{calculcohlocale}
R\Gamma_{[Z]}\mathcal{M} \overset{\sim}{\longrightarrow}  i_{Z+}i^{\dag}_{Z}\mathcal{M}.
\end{equation}
\begin{lemme}\label{coholocalsupport}
Soit $\mathcal{M}\in D^{b}_{\hol}(\mathcal{D}_X)$  à support contenu dans un sous-espace analytique fermé $Z$ de $X$. Alors on a $R\mathcal{M}(\ast Z)\simeq 0$.
\end{lemme}
\begin{proof}
Sur le lieu de lissité de $Z$, le lemme \ref{coholocalsupport} est une conséquence immédiate du théorème de Kashiwara \cite[1.6.1]{HTT} et de la formule \eqref{calculcohlocale}. Autrement dit, le complexe $R\mathcal{M}(\ast Z)$ est à support contenu dans le lieu singulier $Z^{\sing}$ de $Z$. Du fait de l'identité
$$
R\mathcal{M}(\ast Z)\simeq R(R\mathcal{M}(\ast Z)(\ast Z^{\sing}))
$$
on est ramené à démontrer le lemme \ref{coholocalsupport} pour $R\mathcal{M}(\ast Z)$ et le sous-espace $Z^{\sing}$. En raisonnant par récurrence descendante sur la dimension de $Z$, on voit qu'il suffit de traiter le cas où $Z$ est un point, auquel cas on applique de nouveau le théorème de Kashiwara et la formule \eqref{calculcohlocale}.
\end{proof}
On prendra garde que la cohomologie locale algébrique et la localisation ne commutent pas à la dualité. Néanmoins, on a le
\begin{corollaire}\label{pticoro}
Soit $\mathcal{M}\in D^{b}_{\hol}(\mathcal{D}_X)$ et soit $Z$ un sous-espace analytique fermé de $X$. On a une identification canonique
$$
R\mathcal{M}^{\vee}(\ast Z)\simeq R(R\mathcal{M}(\ast Z))^{\vee}(\ast Z)
$$
\end{corollaire}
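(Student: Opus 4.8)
The plan is to dualize the local-cohomology triangle \eqref{ocholocal} and then apply localization a second time, using Lemma \ref{coholocalsupport} to annihilate the local-cohomology part. First I would apply the holonomic duality functor $(-)^{\vee}$ to the triangle \eqref{ocholocal}. As $(-)^{\vee}$ is a contravariant triangulated functor on $D^{b}_{\hol}(\mathcal{D}_X)$, it turns \eqref{ocholocal} into a distinguished triangle
$$
\xymatrix{
(R\mathcal{M}(\ast Z))^{\vee}\ar[r]& \mathcal{M}^{\vee}\ar[r]& (R\Gamma_{[Z]}\mathcal{M})^{\vee}\ar[r]^-{+1}&
}
$$
in which the roles of the local cohomology and of the localization have been exchanged.

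Next I would apply the localization functor $R(-)(\ast Z)$ to this new triangle. This functor is itself triangulated, being defined as the third vertex of \eqref{ocholocal} functorially in $\mathcal{M}$ (equivalently, as the cofiber of the natural transformation $R\Gamma_{[Z]}\to\id$). It therefore sends the triangle above to a distinguished triangle
$$
\xymatrix{
R(R\mathcal{M}(\ast Z))^{\vee}(\ast Z)\ar[r]& R\mathcal{M}^{\vee}(\ast Z)\ar[r]& R(R\Gamma_{[Z]}\mathcal{M})^{\vee}(\ast Z)\ar[r]^-{+1}&
}
$$

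The crux is then to show that the third term vanishes. By construction $R\Gamma_{[Z]}\mathcal{M}$ is supported in $Z$; since holonomic duality preserves the characteristic variety $\car$, and hence the support, the complex $(R\Gamma_{[Z]}\mathcal{M})^{\vee}$ is again supported in $Z$. Lemma \ref{coholocalsupport} then yields $R(R\Gamma_{[Z]}\mathcal{M})^{\vee}(\ast Z)\simeq 0$, so the first arrow of the last triangle is an isomorphism, which is precisely the canonical identification asserted. I do not expect a serious obstacle: the argument is a formal juggling of distinguished triangles, the only point deserving care being the compatibility of duality with support, which rests on the standard invariance of $\car\mathcal{M}$ under $(-)^{\vee}$.
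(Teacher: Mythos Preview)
Your argument is correct and follows essentially the same route as the paper: dualize the local-cohomology triangle, localize along $Z$, and kill the third term via Lemma~\ref{coholocalsupport} using that duality preserves the support. The only cosmetic difference is that you justify the support-preservation through $\car\mathcal{M}$, whereas the paper simply invokes that duality preserves the support.
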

\begin{proof}
En appliquant le foncteur de dualité au triangle de cohomologie locale \eqref{ocholocal}, on obtient un triangle 
\begin{equation}\label{coholcaldual}
\xymatrix{
R\mathcal{M}(\ast Z)^{\vee} \ar[r]&  \mathcal{M}^{\vee} \ar[r]& (R\Gamma_{[Z]}\mathcal{M})^{\vee}\ar[r]^-{+1}& 
}
\end{equation}
Puisque la dualité préserve le support, le complexe $ (R\Gamma_{[Z]}\mathcal{M})^{\vee}$ est à support contenu dans $Z$. Par application du foncteur de localisation à \eqref{coholcaldual}, on obtient \ref{pticoro} immédiatement à partir de \ref{coholocalsupport}.
\end{proof}
\subsection{}\label{compproduittens}
Les foncteurs de cohomologie locale algébrique et localisation sont compatibles au produit tensoriel externe.
Soient $X_1$ et $X_2$ deux variétés complexes. Pour $i=1,2$, soit $\mathcal{M}_i\in  D^{b}_{\hol}(\mathcal{D}_{X_i})$ et soit  $Y_i$ un sous-espace analytique de $X_i$. On pose $Z_1=Y_1\times X_2$ et $Z_2=X_1\times Y_2$. Alors d'après \cite[IV.1]{MT}, on a  des identifications canoniques
\begin{align*}
R(\mathcal{M}_1\boxtimes \mathcal{M}_2)(\ast Z_1 \cup Z_2)&\simeq R\mathcal{M}_1(\ast Y_1)\boxtimes R\mathcal{M}_2(\ast Y_2)\\
R\Gamma_{[Y_1\times Y_2]}(\mathcal{M}_1\boxtimes \mathcal{M}_2)&\simeq R\Gamma_{[Y_1]}\mathcal{M}_1\boxtimes R\Gamma_{[Y_2]}\mathcal{M}_2
\end{align*}

\subsection{}\label{defirr}
Si $Z$ est un sous-espace analytique fermé de $X$, on pose $$\Irr^{\ast}_{Z}(\mathcal{M}):=i_{Z\ast}i_Z^{-1}\mathbf{S}(R\mathcal{M}(\ast Z))$$ 
C'est le \textit{faisceau d'irrégularité de $\mathcal{M}$ le long de $Z$}. En appliquant $i_{Z\ast}i_Z^{-1}\mathbf{S}$ à \eqref{ocholocal}, on voit qu'il s'insère dans le triangle distingué de $D^{b}_{c}(X,\mathds{C})$

\begin{equation}\label{defirrprime}
\xymatrix{
\Irr^{\ast}_{Z}(\mathcal{M})\ar[r]&  i_{Z\ast}i_Z^{-1}\mathbf{S}(\mathcal{M})\ar[r]& \mathbf{S}(R\Gamma_{[Z]}\mathcal{M}) \ar[r]^-{+1}& 
}
\end{equation}
Notons $\mathcal{O}_{X\hat{|}Z}$ la formalisation de $\mathcal{O}_{X}$ le long de $Z$, et posons $\mathcal{O}_{X|Z}:=i_Z^{-1}\mathcal{O}_{X}$ et $\mathcal{Q}_Z:=\mathcal{O}_{X\hat{|}Z}/\mathcal{O}_{X|Z}$, alors d'après \cite[3.4-4]{Mehbsmf}, le triangle \eqref{defirrprime} est isomorphe au triangle
$$
\xymatrix{
R\mathcal{H}om_{\mathcal{D}_X}(\mathcal{M}, \mathcal{Q}_Z  )[-1]\longrightarrow R\mathcal{H}om_{\mathcal{D}_X}(\mathcal{M}, \mathcal{O}_{X|Z} )\longrightarrow R\mathcal{H}om_{\mathcal{D}_X}(\mathcal{M}, \mathcal{O}_{X\hat{|}Z} ) \overset{+1}{\longrightarrow}& 
}
$$
Dans la suite, on notera $\mathbf{S}(\mathcal{M},\hat{\mathcal{O}})$  pour $R\mathcal{H}om_{\mathcal{D}_X}(\mathcal{M}, \mathcal{O}_{X\hat{|}Z})$ quand aucune confusion n'est à craindre quant au sous-espace $Z$ choisi.
\subsection{}\label{compmorphismepropre}
La formation du faisceau d'irrégularité est compatible aux morphismes propres. Soit en effet $f:Y\longrightarrow X$ un morphisme propre entre variétés complexes lisses, et soit $Z$ un sous-espace analytique de $X$. Alors on a une identification canonique \cite[3.6-6]{Mehbsmf}
$$
\Irr^{\ast}_{Z}(f_+\mathcal{M})[\dim X]\overset{\sim}{\longrightarrow}Rf_{\ast}\Irr^{\ast}_{f^{-1}(Z)}(\mathcal{M})[\dim Y]
$$  

\subsection{}\label{CK}
Si $f:Y\longrightarrow X$ est un morphisme avec $Y$ variété lisse, on dispose d'un morphisme de comparaison canonique
$$
\CK_{f,\mathcal{M}}:f^{-1}\mathbf{S}(\mathcal{M})\longrightarrow \mathbf{S}(f^{+}\mathcal{M}) 
$$
appelé \textit{morphisme de Cauchy-Kowalevski}. D'après \cite{TheseKashiwara}, il s'agit d'un isomorphisme si $f$ est \textit{non-caractéristique} pour $\mathcal{M}$. \\ \indent
Dans le cas d'une immersion fermée $i_{Y}:Y\hookrightarrow X$, l'analyse détaillée de  $\CK_{i_{Y},\mathcal{M}}$ faite dans \cite[V 2.2]{MT} montre que $\CK_{i_{Y},\mathcal{M}}$ est un isomorphisme si et seulement si $\Irr^{\ast}_{Y}(\mathcal{M})$ est nul.
\subsection{}\label{lemmemeb}
On dira qu'un module holonome $\mathcal{M}$ est \textit{lisse} si le support $\Supp(\mathcal{M})$ de $\mathcal{M}$ est lisse équidimensionnel et si la variété caractéristique $\car \mathcal{M}$ de $\mathcal{M}$ est égale au fibré conormal de $\Supp(\mathcal{M})$ dans $X$. \\ \indent
Pour $\mathcal{M}\in D^{b}_{\hol}(\mathcal{D}_X)$, il passe suivant \cite[6.1-4]{Mehbsmf} par tout point de $X$ une hypersurface $Z$ telle que  
\begin{enumerate}
\item La trace de $Z$ sur le support de $\mathcal{M}$ est de codimension $1$ dans ce support\footnote{Par dimension d'un sous-espace analytique, il faut entendre la dimension maximale des composantes irréductibles de cet espace.}.
\item Les modules de cohomologie $\mathcal{H}^{k}\mathcal{M}$ sont lisses en dehors de $Z$.
\item 
La dimension du support de $R\Gamma_{[Z]}\mathcal{M}$ est strictement plus petite que la dimension du support de $\mathcal{M}$.
\end{enumerate}
\subsection{}\label{critèrefondamental}
Soit $\mathcal{M}\in D^{b}_{\hol}(\mathcal{D}_X)$ et soit $Z$ une hypersurface de $X$ satisfaisant aux points $(1)$ et $(2)$ de \ref{lemmemeb}. Alors, on a suivant \cite[4.3-16]{Mehbsmf} le\footnote{Stricto sensu, Mebkhout démontre \ref{soustheomeb} pour $\mathcal{M}$ module holonome, mais la généralisation au cas d'un complexe est aisée. Poser $\mathcal{N}:=\bigoplus_k \mathcal{H}^{k}\mathcal{M}$. On a par définition $\Supp \mathcal{N}=\Supp \mathcal{M}$. Par hypothèse, le support de $\Irr^{\ast}_{Z}(\mathcal{M})$ est un fermé de $Z$ de dimension $< \dim \Supp(\mathcal{M})-1$. D'après \cite[4.2-4]{Mehbsmf}, la nullité de $\Irr^{\ast}_{Z}(\mathcal{M})$ sur l'ouvert complémentaire implique la nullité de $\Irr^{\ast}_{Z}(\mathcal{N})$ sur ce même ouvert. On conclut alors en appliquant le critère fondamental de la régularité à $\mathcal{N}$ puis en utilisant la suite spectrale d'hypercohomologie 
$$
E_{2}^{pq}=\mathcal{H}^{p}\Irr^{\ast}_{Z}(\mathcal{H}^{-q}\mathcal{M})  \Longrightarrow \mathcal{H}^{p+q}\Irr^{\ast}_{Z}(\mathcal{M})
$$}
%\footnote{Stricto sensu, Mebkhout démontre \ref{soustheomeb} pour $\mathcal{M}$ module holonome, mais la généralisation au cas d'un complexe est aisée. Poser $\mathcal{N}:=\bigoplus_k \mathcal{H}^{k}\mathcal{M}$. On a par définition $\Supp \mathcal{N}=\Supp \mathcal{M}$. Par hypothèse, le support de $\Irr^{\ast}_{Z}(\mathcal{M})$ est un fermé de $Z$ de dimension $< \dim \Supp(\mathcal{M})-1$. D'après \cite[4.2-4]{Mehbsmf}, la nullité de $\Irr^{\ast}_{Z}(\mathcal{M})$ sur l'ouvert complémentaire implique la nullité de $\Irr^{\ast}_{Z}(\mathcal{N})$ sur ce même ouvert. On conclut alors en appliquant le critère fondamental de la régularité à $\mathcal{N}$ et en utilisant la suite spectrale d'hypercohomologie 
%$$
%E_{2}^{pq}=\mathcal{H}^{p}\Irr^{\ast}_{Z}(\mathcal{H}^{-q}\mathcal{M})  \Longrightarrow \mathcal{H}^{p+q}\Irr^{\ast}_{Z}(\mathcal{M})
%$$}
\begin{soustheorem}\label{soustheomeb}
Le complexe $\Irr^{\ast}_{Z}(\mathcal{M})$ est nul si et seulement si la dimension de son support est strictement plus petite que $ \dim \Supp(\mathcal{M})-1$.
\end{soustheorem}
Mebkhout en déduit le corollaire important
\begin{corollaire}\label{corrcriterfond}
Dans les conditions de \ref{soustheomeb}, le module $\mathcal{M}(\ast Z)$ est régulier.
\end{corollaire}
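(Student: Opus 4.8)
The plan is to deduce the regularity of $R\mathcal{M}(\ast Z)$ from Deligne's criterion \cite{Del}, the vanishing of $\Irr^{\ast}_{Z}(\mathcal{M})$ serving to control the irregularity generically along $Z$. Being in the conditions of \ref{soustheomeb} means precisely that $\Irr^{\ast}_{Z}(\mathcal{M})$ is the null object. Since $\Irr^{\ast}_{Z}$ depends on $\mathcal{M}$ only through the localization $R\mathcal{M}(\ast Z)$, and since localization is idempotent, this reads $\Irr^{\ast}_{Z}(R\mathcal{M}(\ast Z))=0$. Moreover, by point $(2)$ of \ref{lemmemeb} the cohomology modules $\mathcal{H}^{k}\mathcal{M}$ are smooth away from $Z$, so that $\Supp(\mathcal{M})$ is smooth there; as localization along $Z$ does not alter $\mathcal{M}$ outside $Z$, the cohomology modules of $R\mathcal{M}(\ast Z)$ are smooth away from $Z$ and all the singularities are concentrated along $Z$.

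First I would pass from the complex to its cohomology modules as in the footnote to \ref{critèrefondamental}: setting $\mathcal{N}:=\bigoplus_{k}\mathcal{H}^{k}\mathcal{M}$, the vanishing of $\Irr^{\ast}_{Z}(\mathcal{M})$ forces, via \cite[4.2-4]{Mehbsmf} and the hypercohomology spectral sequence, the generic vanishing of $\Irr^{\ast}_{Z}(\mathcal{H}^{k}\mathcal{M})$ along each irreducible component of $Z\cap\Supp(\mathcal{M})$. It then suffices to prove that each meromorphic module $\mathcal{H}^{k}R\mathcal{M}(\ast Z)=\mathcal{H}^{k}\mathcal{M}(\ast Z)$ is regular.

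Next I would localize the regularity question along the codimension-one strata of $\Supp(\mathcal{M})$, that is along the components of $Z\cap\Supp(\mathcal{M})$. At a generic point $z$ of such a component both $Z$ and $\Supp(\mathcal{M})$ are smooth, and one may restrict to a germ of smooth curve transverse to $Z$ inside $\Supp(\mathcal{M})$. There $\mathcal{H}^{k}R\mathcal{M}(\ast Z)$ becomes a germ of meromorphic connection on a punctured disk whose regularity is measured by its irregularity number, and the stalk of $\Irr^{\ast}_{Z}$ at $z$ computes exactly this number. The generic vanishing obtained above thus gives that $\mathcal{H}^{k}R\mathcal{M}(\ast Z)$ is regular at the generic point of every component of $Z\cap\Supp(\mathcal{M})$. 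Deligne's criterion \cite{Del}, in its extension to localized holonomic modules smooth outside $Z$, then upgrades this generic regularity to regularity of $\mathcal{H}^{k}R\mathcal{M}(\ast Z)$, and hence of $R\mathcal{M}(\ast Z)$.

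The delicate point is the identification, at a generic point of $Z$, between the stalk of the perverse complex $\Irr^{\ast}_{Z}$ and the classical irregularity of the transverse slice, together with the verification that restriction to a transverse curve commutes with localization and with the formation of the irregularity sheaf there. This rests on the perversity theorem for the irregularity sheaf \cite[2.1.6]{Mehbgro} and on the compatibility of $\Irr^{\ast}$ with non-characteristic restriction, which is what makes the reduction to the one-dimensional situation legitimate.
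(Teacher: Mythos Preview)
The paper does not give its own proof of this corollary: it simply records it as a consequence Mebkhout draws in \cite[4.3-16]{Mehbsmf} from the critère fondamental (Théorème~\ref{soustheomeb}). There is therefore no argument in the paper to compare your proposal against.

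Your sketch is a fair outline of the architecture of Mebkhout's argument, and you correctly isolate the ingredients that matter: the passage from the complex to its cohomology modules via the hypercohomology spectral sequence and \cite[4.2-4]{Mehbsmf}, the perversity of $\Irr^{\ast}_{Z}$ \cite[2.1.6]{Mehbgro}, and the identification of the stalk of $\Irr^{\ast}_{Z}$ at a generic point of $Z\cap\Supp(\mathcal{M})$ with the classical irregularity number of a transverse slice (which is exactly the content of the non-characteristic restriction compatibility, Proposition~\ref{restrcar} here). Your reading of ``the conditions of \ref{soustheomeb}'' as including $\Irr^{\ast}_{Z}(\mathcal{M})\simeq 0$ is also the correct one, as the example $\mathcal{M}=e^{1/x}$ on $\mathds{C}$ with $Z=\{0\}$ shows that hypotheses (1)--(2) of \ref{lemmemeb} alone do not force $\mathcal{M}(\ast Z)$ to be regular.

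The one point to flag is a risk of circularity. When you write that ``Deligne's criterion \cite{Del}, in its extension to localized holonomic modules smooth outside $Z$, then upgrades this generic regularity to regularity'', that extension \emph{is} the critère fondamental \cite[4.3-16]{Mehbsmf}; the corollary you are proving is precisely a reformulation of it. To make the argument stand on its own one must actually perform the reduction to a genuine meromorphic connection --- via Kashiwara's equivalence on the smooth locus of the support and a resolution of that support --- and then apply Deligne's original theorem \cite{Del}. That reduction is where the substance lies, and it is what Mebkhout carries out; your last paragraph acknowledges this as the ``delicate point'', which is accurate.
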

\subsection{}\label{dcomp} Pour tout $\mathcal{D}_{\mathds{C},0}$-module holonome $\mathcal{M}$, on
notera $\hat{\mathcal{M}}:=\hat{\mathcal{O}}_{\mathds{C},0}\otimes_{\mathcal{O}_{\mathds{C},0}}\mathcal{M}$. D'après \cite[6.3.1]{Courssimpa90}, on dispose d'une décomposition canonique
\begin{equation}\label{deomp}
\hat{\mathcal{M}}=\hat{\mathcal{M}}^{\reg}\oplus \hat{\mathcal{M}}^{\irr}
\end{equation}
avec $\hat{\mathcal{M}}^{\reg}$ régulier et $\hat{\mathcal{M}}^{\irr}$ satisfaisant à $\hat{\mathcal{M}}^{\irr}\simeq \hat{\mathcal{M}}^{\irr}(\ast 0)$.
\begin{lemme}\label{majorationn}
Supposons que $\mathcal{M}$ n'a pas de sections à support $0$, et posons $\mathcal{N}:=\mathcal{H}^{1}R\Gamma_{[0]}\mathcal{M}$. Alors on a
$$
\dim \mathbf{S}^{1}(\mathcal{N})_{0}\leq \rg \hat{\mathcal{M}}(\ast 0)^{\reg}
$$
\end{lemme}
\begin{proof}
On a la chaine d'isomorphismes
\begin{align}
\mathbf{S}(\mathcal{N})_{0}&\label{1}\simeq 
\mathbf{S}(\mathcal{N},\hat{\mathcal{O}})_{0}
 \simeq \mathbf{S}(\hat{\mathcal{N}},\hat{\mathcal{O}})_{0} \simeq \mathbf{S}(\hat{\mathcal{M}},\hat{\mathcal{O}})_{0}[-1]      \\
 &\label{2}\simeq \mathbf{S}(\hat{\mathcal{M}}^{\reg},\hat{\mathcal{O}})_{0}[-1]     \oplus \mathbf{S}(\hat{\mathcal{M}}^{\irr},\hat{\mathcal{O}})_{0}[-1]     \\
 &\label{3}\simeq\mathbf{S}(\hat{\mathcal{M}}^{\reg},\hat{\mathcal{O}})_{0}[-1] \\
 &\label{4}\simeq \mathbf{S}(\hat{\mathcal{M}}^{\reg}(\ast 0)/\hat{\mathcal{M}}^{\reg},\hat{\mathcal{O}})_{0}
\end{align}
où \eqref{1} provient du fait que $\mathcal{N}$ est régulier et de ce que le complexe des solutions formelles d'un module localisé est toujours nul\footnote{Voir la preuve de \cite[1.3.10]{Courssimpa90}}, où \eqref{2} provient de \eqref{deomp}. Enfin, \eqref{3} et \eqref{4} proviennent de ce que $\hat{\mathcal{M}}^{\irr}$ et $\hat{\mathcal{M}}^{\reg}(\ast 0)$ sont localisés.\\ \indent
Puisque le module 
$\hat{\mathcal{M}}^{\reg}(\ast 0)/\hat{\mathcal{M}}^{\reg}$ est à support $0$, il s'agit d'une somme de $m$ copies du Dirac en $0$. L'identification 
$$
\mathbf{S}(\mathcal{N})_{0}\simeq \mathbf{S}(\hat{\mathcal{M}}^{\reg}(\ast 0)/\hat{\mathcal{M}}^{\reg},\hat{\mathcal{O}})_{0}
$$
donne donc $m=\dim \mathbf{S}^{1}(\mathcal{N})_{0}$. En particulier, l'entier $\dim \mathbf{S}^{1}(\mathcal{N})_{0}$ est plus petit que le cardinal de n'importe quelle famille $\hat{\mathcal{D}}_{\mathds{C},0}$-génératrice de $\hat{\mathcal{M}}^{\reg}(\ast 0)$.\\ \indent
Pour montrer \ref{majorationn}, on est donc ramené à montrer qu'un $\mathds{C}((x))$-module différentiel régulier $M$ admet une famille $\hat{\mathcal{D}}_{\mathds{C},0}$-génératrice de cardinal égal à son rang, noté $\rg(M)$. Si $(e_1,\dots , e_{\rg(M)})$ est une base de $M$ sur laquelle $x\partial_{x}$ agit via une matrice triangulaire supérieure à coefficients complexes, alors pour un entier $n$ assez grand, la famille des $e_i/x^{n}, i=1,\dots ,\rg(M)$ convient.
\end{proof}

\section{Réduction du théorème \ref{theo1} au théorème \ref{theo2prime}}\label{Rappel}
Le contenu de cette section est entièrement dû à Mebkhout  \cite[7.3]{Mehbsmf} et ne prétend à aucune originalité. On adopte les notations de l'introduction.\\ \indent
L'observation principale de Mebkhout est la possibilité de factoriser le morphisme \eqref{RH} de la façon suivante
$$
\xymatrix{ R\mathcal{H}om(\mathcal{M}_1,\mathcal{M}_2) \ar[r]^{\sim}_-{(1)} \ar[d]_{RH_{\mathcal{M}_1,\mathcal{M}_2}}& \DR(\mathcal{M}_1^{\vee}\otimes^{\mathds{L}}_{\mathcal{O}_X}\mathcal{M}_2)   
      \ar[r]^-{\sim}_-{(2)}  & \mathbf{S}(\mathcal{M}_1^{\vee}\otimes^{\mathds{L}}_{\mathcal{O}_X}\mathcal{M}_2)^{\vee}\ar[d]^-{(3)}\\
       R\mathcal{H}om(\mathbf{S}(\mathcal{M}_2),\mathbf{S}(\mathcal{M}_1))     & \ar[l]_-{\sim}^-{(5)} R\mathcal{H}om(\mathbf{S}(\mathcal{M}_2),\mathbf{S}(\mathcal{M}_1^{\vee})^{\vee}   )   & \ar[l]_-{\sim}^-{(4)} (\mathbf{S}(\mathcal{M}_1^{\vee})\otimes\mathbf{S}(\mathcal{M}_2))^{\vee}        
}
$$
Les isomorphismes $(1)$ et $(4)$ sont de nature purement formelle. Les isomorphismes $(2)$ et $(5)$ proviennent du théorème de dualité locale \cite{MN}. Quant au morphisme $(3)$, il s'agit du morphisme dual du morphisme $(3)^{\vee}$ défini par la commutativité du diagramme
$$   
\xymatrix@C=6pc{
i_{\Delta_X}^{-1}\mathbf{S}(\mathcal{M}^{\vee}_1\boxtimes \mathcal{M}_2)\ar[r]^-{\CK_{i_{\Delta_X},\mathcal{M}^{\vee}_1\boxtimes \mathcal{M}_2}} \ar[d]^-\wr&   \mathbf{S}(i_{\Delta_X}^{+}(\mathcal{M}^{\vee}_1\boxtimes \mathcal{M}_2)) \ar[d]^-\wr \\
\mathbf{S}(\mathcal{M}^{\vee}_1)\otimes \mathbf{S}(\mathcal{M}_2) \ar[r]_{(3)^{\vee}}      &   \mathbf{S}(\mathcal{M}^{\vee}_1 \otimes^{\mathds{L}}_{\mathcal{O}_{X}} \mathcal{M}_2)
}
$$
où l'identification de gauche provient de  \ref{commutationsolbox}. \\ \indent
En particulier, le morphisme de Riemann-Hilbert $RH_{\mathcal{M}_1,\mathcal{M}_2}$ est un isomorphisme si et seulement si le morphisme $\CK_{i_{\Delta_X},\mathcal{M}^{\vee}_1\boxtimes \mathcal{M}_2}$
 est un isomorphisme. D'après la discussion \ref{CK},  le morphisme $RH_{\mathcal{M}_1,\mathcal{M}_2}$ est donc un isomorphisme si et seulement si $\Irr^{\ast}_{\Delta_X}(\mathcal{M}^{\vee}_1\boxtimes \mathcal{M}_2)$ est nul, et on a la réduction promise. 

%\subsection{Deux lemmes de Mebkhout}
%Les deux lemmes suivant nous serons utiles: 
%\begin{lemme}\label{Meblemme1}
%Soit $X$ est une variété complexe lisse,  et  $i:X^{\prime}\hookrightarrow X$ une hypersurface lisse de $X$ non caractéristique pour un module holonome $\mathcal{M}$ sur $X$. Alors pour tout $x\in X^{\prime}$, le morphisme canonique
%$$
%\Irr^{\ast}_{x}(\mathcal{M})\longrightarrow \Irr^{\ast}_{x}(i^{+}\mathcal{M})[1]
%$$
%est un isomorphisme de $D^{b}_{c}(x,\mathds{C})$.
%\end{lemme}
%\begin{proof}
%Voir \cite[6.2-6]{Mehbsmf}.
%\end{proof}
%\begin{lemme}\label{Meblemme2}

%\end{lemme}
\section{Quelques propriétés de l'irrégularité diagonale}\label{irrdiag}
\subsection{Irrégularités diagonale et ponctuelle}
Soit $X$ une variété complexe lisse et soient $\mathcal{M}_1,\mathcal{M}_2\in  D^{b}_{\hol}(\mathcal{D}_X)$. On se donne un point $x\in X$. On dispose du triangle de cohomologie locale
$$
R\Gamma_{[\Delta_X]}R(\mathcal{M}_{1}\boxtimes \mathcal{M}_{2})(\ast (x,x))\longrightarrow    R(\mathcal{M}_{1}\boxtimes \mathcal{M}_{2})(\ast (x,x))   \longrightarrow    R(\mathcal{M}_{1}\boxtimes \mathcal{M}_{2})(\ast \Delta_X) \longrightarrow 
$$
Or on a par \eqref{calculcohlocale}
\begin{align*}
R\Gamma_{[\Delta_X]}R(\mathcal{M}_{1}\boxtimes \mathcal{M}_{2})(\ast (x,x))&\simeq i_{\Delta_X +} i_{\Delta_X}^{+}R(\mathcal{M}_{1}\boxtimes \mathcal{M}_{2})(\ast (x,x))[-\dim X]\\ 
                  &\simeq  i_{\Delta_X +}R(i_{\Delta_X}^{+}(\mathcal{M}_{1}\boxtimes \mathcal{M}_{2}))(\ast x)[-\dim X] \\ 
                  &\simeq i_{\Delta_X +}R(\mathcal{M}_{1}\otimes^{\mathds{L}}_{\mathcal{O}_{X}}\mathcal{M}_{2})(\ast x)[-\dim X]
\end{align*}
Par compatibilité du foncteur solution avec l'image directe  \cite[4.2.5]{HTT}, on en déduit\footnote{Prendre garde aux décalages qui s'introduisent dans \cite[4.2.5]{HTT} pour le foncteur des solutions du fait de \cite[4.2.1]{HTT}. Utiliser aussi la formule $\mathbf{S}(\mathcal{M}[-\dim X])=\mathbf{S}(\mathcal{M})[\dim X]$. } une identification canonique dans $D^{b}_{c}(X,\mathds{C})$
$$
\mathbf{S}(R\Gamma_{[\Delta_X]}R(\mathcal{M}_{1}\boxtimes \mathcal{M}_{2})(\ast (x,x)))\simeq i_{\Delta_X \ast}\mathbf{S}(R(\mathcal{M}_{1}\otimes^{\mathds{L}}_{\mathcal{O}_{X}}\mathcal{M}_{2})(\ast x))
$$
Par application du foncteur solution au triangle de cohomologie locale précédent puis par prise des germes en $(x,x)$, on obtient un  triangle distingué de complexes d'espaces vectoriels à cohomologie bornée et de dimension finie
\begin{equation}\label{irrdiagetponct}
\xymatrix{
\Irr^{\ast}_{\Delta_{X}}(\mathcal{M}_{1}\boxtimes \mathcal{M}_{2})_{(x,x)} \ar[r] &  \Irr^{\ast}_{(x,x)}(\mathcal{M}_{1}\boxtimes \mathcal{M}_{2}) \ar[r]&\Irr^{\ast}_{x}(\mathcal{M}_{1}\otimes^{\mathds{L}}_{\mathcal{O}_{X}}\mathcal{M}_{2})  \ar[r]^-{+1}&
}
\end{equation}
\subsection{Le complexe $\Irr^{\ast}_{(x_1,x_2)}(\mathcal{M}_{1}\boxtimes \mathcal{M}_{2})$.} Dans ce paragraphe, on se donne des variétés complexes lisses $X_1$ et $X_2$, ainsi que des points $x_1\in X_1$ et $x_2\in X_2$. Enfin pour $i=1,2$, on se donne un complexe $\mathcal{M}_{i}\in D^{b}_{\hol}(\mathcal{D}_{X_i})$. \\ \indent
D'après \ref{compproduittens}, on a
$$
R\Gamma_{[(x_1,x_2)]}(\mathcal{M}_{1}\boxtimes \mathcal{M}_{2})\simeq R\Gamma_{[x_1]}\mathcal{M}_{1}\boxtimes R\Gamma_{[x_2]}\mathcal{M}_{2}
$$
Par application du foncteur solution au triangle de cohomologie locale
$$
\xymatrix{
R\Gamma_{[(x_1,x_2)]}(\mathcal{M}_{1}\boxtimes \mathcal{M}_{2})\ar[r] &  \mathcal{M}_{1}\boxtimes \mathcal{M}_{2}\ar[r]&  R(\mathcal{M}_{1}\boxtimes \mathcal{M}_{2})(\ast (x_1,x_2))\ar[r]^-{+1}&
}
$$
le point \ref{commutationsolbox} donne un triangle distingué de  $D^{b}_{c}(x_1\times x_2,\mathds{C})$
\begin{equation}\label{triangle2}
\Irr^{\ast}_{(x_1,x_2)}(\mathcal{M}_{1}\boxtimes \mathcal{M}_{2}) \longrightarrow  \mathbf{S}(\mathcal{M}_{1})_{x_1}\otimes \mathbf{S}(\mathcal{M}_{2})_{x_2}\longrightarrow \mathbf{S}(R\Gamma_{[x_1]}\mathcal{M}_{1})_{x_1}\otimes  \mathbf{S}(R\Gamma_{[x_2]}\mathcal{M}_{2})_{x_2}\overset{+1}{\longrightarrow }
\end{equation}

\subsection{Compatibilité avec la restriction non-caractéristique}\label{restrictioncar}
Soit $X$ une variété complexe lisse et soit $\mathcal{M}\in D^{b}_{\hol}(\mathcal{D}_X)$. On se donne une sous-variété lisse $X^{\prime}$ de $X$, ainsi qu'une sous-variété lisse $Z$ de $X$ telle que $Z^{\prime}:=Z\cap X^{\prime}$ est lisse. On a en particulier un diagramme cartésien 
\begin{equation}\label{diagrammecart}
\xymatrix{
Z^{\prime} \ar[r] \ar[d]_-{i_{Z^{\prime}}^{\prime}} &  Z \ar[d]^-{i_{Z}} \\
X^{\prime} \ar[r]_{i_{X^{\prime} }}      &   X
}
\end{equation}
Montrons la 
\begin{proposition}\label{restrcar}
Si $X^{\prime}$ est non-caractéristique pour $\mathcal{M}$, et si $Z^{\prime}$ est non caractéristique pour $i_Z^{+}\mathcal{M}$, alors on a une identification canonique
$$
\Irr_{Z^{\prime}}^{\ast}(i^{+}_{X^{\prime}}\mathcal{M})\overset{\sim}{\longrightarrow} i_{Z^{\prime}\ast}^{\prime}i^{-1}_{Z^{\prime}}\Irr^{\ast}_{Z}(\mathcal{M})
$$
\end{proposition}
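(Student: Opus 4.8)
Le plan est de se ramener, grâce à la définition \ref{defirr} et au diagramme cartésien \eqref{diagrammecart}, à la construction d'un isomorphisme canonique au voisinage de $Z'$. En déroulant \ref{defirr}, on a $\Irr^{\ast}_{Z}(\mathcal{M})=i_{Z\ast}i_Z^{-1}\mathbf{S}(R\mathcal{M}(\ast Z))$; en restreignant à $Z'$ vu comme sous-variété de $X$, et en utilisant $i_Z^{-1}i_{Z\ast}=\id$ ainsi que la commutativité de \eqref{diagrammecart}, le membre de droite $i'_{Z'\ast}i_{Z'}^{-1}\Irr^{\ast}_{Z}(\mathcal{M})$ s'identifie à $i'_{Z'\ast}(i'_{Z'})^{-1}i_{X'}^{-1}\mathbf{S}(R\mathcal{M}(\ast Z))$, tandis que le membre de gauche vaut $i'_{Z'\ast}(i'_{Z'})^{-1}\mathbf{S}(R(i_{X'}^{+}\mathcal{M})(\ast Z'))$. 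Il suffit donc de produire un isomorphisme canonique
\[
i_{X'}^{-1}\mathbf{S}(R\mathcal{M}(\ast Z))\overset{\sim}{\longrightarrow}\mathbf{S}(R(i_{X'}^{+}\mathcal{M})(\ast Z'))
\]
de complexes sur $X'$ (ou seulement sur les germes le long de $Z'$), puis de lui appliquer $i'_{Z'\ast}(i'_{Z'})^{-1}$.

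Pour construire cet isomorphisme, on identifie d'abord le but: comme $i_{X'}^{-1}(Z)=Z\cap X'=Z'$, l'isomorphisme de triangles \eqref{cohlociminv} appliqué à $f=i_{X'}$ donne $i_{X'}^{+}R\mathcal{M}(\ast Z)\simeq R(i_{X'}^{+}\mathcal{M})(\ast Z')$, d'où $\mathbf{S}(R(i_{X'}^{+}\mathcal{M})(\ast Z'))\simeq\mathbf{S}(i_{X'}^{+}R\mathcal{M}(\ast Z))$. Le candidat naturel est alors le morphisme de Cauchy-Kowalevski $\CK_{i_{X'},R\mathcal{M}(\ast Z)}\colon i_{X'}^{-1}\mathbf{S}(R\mathcal{M}(\ast Z))\to\mathbf{S}(i_{X'}^{+}R\mathcal{M}(\ast Z))$, qui, d'après \ref{CK}, est un isomorphisme dès que $i_{X'}$ est non-caractéristique pour $R\mathcal{M}(\ast Z)$ le long de $Z'$. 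Il restera à vérifier, par naturalité, que ce morphisme réalise bien l'identification voulue à travers \eqref{cohlociminv}.

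Le point crucial est donc de vérifier cette non-caractéristicité. On estime $\car(R\mathcal{M}(\ast Z))$ à l'aide du triangle \eqref{ocholocal}, qui donne $\car(R\mathcal{M}(\ast Z))\subseteq\car\mathcal{M}\cup\car(R\Gamma_{[Z]}\mathcal{M})$. Comme $Z$ est lisse, \eqref{calculcohlocale} identifie $R\Gamma_{[Z]}\mathcal{M}\simeq i_{Z+}i_Z^{\dag}\mathcal{M}$, dont la variété caractéristique est l'ensemble des covecteurs $\xi\in T^{\ast}X$ au-dessus de $Z$ tels que $\xi|_{T_pZ}\in\car(i_Z^{+}\mathcal{M})$. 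Soit alors $p\in Z'$ et $\xi\in T^{\ast}_pX$ conormal à $X'$ et appartenant à $\car(R\mathcal{M}(\ast Z))$. Si $\xi\in\car\mathcal{M}$, l'hypothèse que $X'$ est non-caractéristique pour $\mathcal{M}$ force $\xi=0$. Sinon $\xi|_{T_pZ}\in\car(i_Z^{+}\mathcal{M})$; comme $T_pZ'\subseteq T_pX'$ et que $\xi$ est conormal à $X'$, le covecteur $\xi|_{T_pZ}$ s'annule sur $T_pZ'$, donc est conormal à $Z'$ dans $Z$. L'hypothèse que $Z'$ est non-caractéristique pour $i_Z^{+}\mathcal{M}$ donne alors $\xi|_{T_pZ}=0$, soit $\xi$ conormal à $Z$. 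Ainsi $\xi$ s'annule sur $T_pX'+T_pZ$, et la transversalité de $X'$ et de $Z$ le long de $Z'$ — que traduit l'hypothèse de lissité de l'intersection $Z'=Z\cap X'$ dans \eqref{diagrammecart} — donne $T_pX'+T_pZ=T_pX$, d'où $\xi=0$.

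La principale difficulté réside dans ce dernier argument: l'estimation de la variété caractéristique de la localisation, et surtout l'emploi de la transversalité au point final, là où la seule lissité ensembliste de $Z'$ ne suffirait pas. C'est à cet endroit que les deux hypothèses interviennent de façon essentielle, la première éliminant la composante $\car\mathcal{M}$ et la seconde la composante conormale issue de $R\Gamma_{[Z]}\mathcal{M}$. Une fois $i_{X'}$ reconnu non-caractéristique pour $R\mathcal{M}(\ast Z)$ au voisinage de $Z'$, le morphisme de Cauchy-Kowalevski de \ref{CK} fournit l'isomorphisme cherché, et l'application de $i'_{Z'\ast}(i'_{Z'})^{-1}$ conclut.
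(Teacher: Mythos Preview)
Your overall strategy—reducing to the claim that $\CK_{i_{X'},R\mathcal{M}(\ast Z)}$ is an isomorphism along $Z'$, then applying $i'_{Z'\ast}(i'_{Z'})^{-1}$—matches the paper's. The gap lies in how you establish that this Cauchy--Kowalevski morphism is an isomorphism.

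You argue that $X'$ is non-characteristic for $R\mathcal{M}(\ast Z)$ via a characteristic-variety estimate, and at the last step you invoke $T_pX'+T_pZ=T_pX$, claiming this transversality follows from the smoothness of $Z'=Z\cap X'$. That implication is false. In the main application (Corollary~\ref{restricara}) one replaces $X$ by $X\times X$, $Z$ by $\Delta_X$, and $X'$ by $X'\times X'$; then $\Delta_{X'}$ is smooth, yet $\dim\bigl(T_{(p,p)}\Delta_X+T_{(p,p)}(X'\times X')\bigr)=\dim X+\dim X'<2\dim X$ whenever $X'\subsetneq X$, so transversality fails. And in that situation $X'$ is genuinely \emph{not} non-characteristic for $R\Gamma_{[Z]}\mathcal{M}$: its characteristic variety always contains the conormal $T^{\ast}_ZX$ over the support of $i_Z^{+}\mathcal{M}$, which meets $T^{\ast}_{X'}X$ outside the zero section. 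Hence $X'$ is not non-characteristic for $R\mathcal{M}(\ast Z)$ either, and your argument cannot close.

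The paper avoids this obstacle by not attempting a characteristic-variety estimate for $R\mathcal{M}(\ast Z)$. It applies $\CK_{i_{X'},-}$ to the whole local-cohomology triangle \eqref{ocholocal} and shows separately that $\CK_{i_{X'},\mathcal{M}}$ and $\CK_{i_{X'},R\Gamma_{[Z]}\mathcal{M}}$ are isomorphisms; the one for $R\mathcal{M}(\ast Z)$ then follows. The first uses the non-characteristic hypothesis on $\mathcal{M}$. For the second, the paper uses not non-characteristicity but the finer equivalence recalled in \ref{CK}: $\CK_{i_{X'},R\Gamma_{[Z]}\mathcal{M}}$ is an isomorphism iff $\Irr^{\ast}_{X'}(R\Gamma_{[Z]}\mathcal{M})=0$. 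Via \eqref{calculcohlocale} and the compatibility \ref{compmorphismepropre} of $\Irr^{\ast}$ with proper direct image, this reduces to $\Irr^{\ast}_{Z'}(i_Z^{+}\mathcal{M})=0$, which is precisely the second hypothesis. Thus the hypothesis on $Z'$ enters through the vanishing of an irregularity sheaf, a strictly weaker condition than the non-characteristicity you were trying to prove.
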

\begin{proof}
Par application de $i_{Z^{\prime}\ast}^{\prime}i_{Z^{\prime}}^{\prime-1}\mathbf{S}$
à l'isomorphisme de triangles \eqref{cohlociminv}, on obtient un isomorphisme de triangles
$$
\xymatrix{
 i_{Z^{\prime}\ast}^{\prime}i_{Z^{\prime}}^{\prime-1}\mathbf{S}(i^{+}_{X^{\prime}}R\mathcal{M}(\ast Z)) \ar[r] \ar[d]^-\wr&   i_{Z^{\prime}\ast}^{\prime}i_{Z^{\prime}}^{\prime-1}\mathbf{S}(i^{+}_{X^{\prime}}\mathcal{M}  ) \ar[r] \ar[d]^-\wr&    i_{Z^{\prime}\ast}^{\prime}i_{Z^{\prime}}^{\prime-1}\mathbf{S}(i^{+}_{X^{\prime}}R\Gamma_{[Z]}\mathcal{M}) \ar[d]^-\wr \\
  \Irr^{\ast}_{Z^{\prime}}(i^{+}_{X^{\prime}}\mathcal{M})  \ar[r] &     i_{Z^{\prime}\ast}^{\prime}i_{Z^{\prime}}^{\prime-1}\mathbf{S}( i^{+}_{X^{\prime}}\mathcal{M} )       \ar[r] &   i_{Z^{\prime}\ast}^{\prime}i_{Z^{\prime}}^{\prime-1}\mathbf{S}(R\Gamma_{[Z^{\prime}]}i^{+}_{X^{\prime}}\mathcal{M} )
}
$$
Or le morphisme de Cauchy-Kowalevski induit par fonctorialité un morphisme de triangles
$$
\xymatrix{
i^{-1}_{X^{\prime}}\mathbf{S}(R\mathcal{M}(\ast Z))\ar[r] \ar[d]^{\CK_{X^{\prime},R\mathcal{M}(\ast Z)}}&    i^{-1}_{X^{\prime}} \mathbf{S}(\mathcal{M})    \ar[r] \ar[d]^{\CK_{X^{\prime},\mathcal{M}}}&      i^{-1}_{X^{\prime}}\mathbf{S}(R\Gamma_{[Z]}(\mathcal{M}))   \ar[d]^{\CK_{X^{\prime},R\Gamma_{[Z]}(\mathcal{M})}} \\
\mathbf{S}(i^{+}_{X^{\prime}}R\mathcal{M}(\ast Z))\ar[r] &    \mathbf{S}(i^{+}_{X^{\prime}}\mathcal{M})    \ar[r] &               
 \mathbf{S}(i^{+}_{X^{\prime}}R\Gamma_{[Z]}(\mathcal{M}))
}
$$
Par application de $i_{Z^{\prime}\ast}^{\prime}i_{Z^{\prime}}^{\prime-1}$, 
on en déduit un morphisme de triangles
\begin{equation}\label{motriangle}
\xymatrix{
i_{Z^{\prime}\ast}^{\prime}i^{-1}_{Z^{\prime}}\mathbf{S}(R\mathcal{M}(\ast Z))\ar[r] \ar[d]&       i_{Z^{\prime}\ast}^{\prime}i^{-1}_{Z^{\prime}}\mathbf{S}(\mathcal{M})  \ar[r] \ar[d]&         i_{Z^{\prime}\ast}^{\prime}i^{-1}_{Z^{\prime}}\mathbf{S}(R\Gamma_{[Z]}(\mathcal{M}))\ar[d]\\
i_{Z^{\prime}\ast}^{\prime}i_{Z^{\prime}}^{\prime-1}\mathbf{S}(i^{+}_{X^{\prime}}R\mathcal{M}(\ast Z))\ar[r] &          i_{Z^{\prime}\ast}^{\prime}i_{Z^{\prime}}^{\prime-1}\mathbf{S}(i^{+}_{X^{\prime}}\mathcal{M}) \ar[r] &  
i_{Z^{\prime}\ast}^{\prime}i_{Z^{\prime}}^{\prime-1}\mathbf{S}(i^{+}_{X^{\prime}}R\Gamma_{[Z]}(\mathcal{M}))              
}
\end{equation}
Or
$$
i_{Z^{\prime}\ast}^{\prime}i^{-1}_{Z^{\prime}}\mathbf{S}(R\mathcal{M}(\ast Z))\simeq i_{Z^{\prime}\ast}^{\prime}i^{-1}_{Z^{\prime}}i_{Z*}i^{-1}_{Z}\mathbf{S}(R\mathcal{M}(\ast Z))=i_{Z^{\prime}\ast}^{\prime}i^{-1}_{Z^{\prime}}\Irr_{Z}^{\ast}(\mathcal{M})
$$
où la première identification provient simplement de $Z^{\prime}\subset Z$.\\ \indent
On est donc ramené à montrer que le morphisme vertical le plus à gauche de \eqref{motriangle} est un isomorphisme. Il suffit pour cela de voir que $\CK_{X^{\prime},\mathcal{M}}$ et $\CK_{X^{\prime},R\Gamma_{[Z]}(\mathcal{M})}$ sont des isomorphismes.\\ \indent
Pour $\CK_{X^{\prime},\mathcal{M}}$, cela provient de l'hypothèse que $X^{\prime}$ est non caractéristique pour $\mathcal{M}$. D'après \ref{CK}, on sait que l'obstruction à ce que $\CK_{X^{\prime},R\Gamma_{[Z]}(\mathcal{M})}$ soit un isomorphisme réside dans la non-nullité de 
$\Irr^{\ast}_{X^{\prime}}(R\Gamma_{[Z]}(\mathcal{M}))$. Or d'après \eqref{calculcohlocale}, on a 
$$
R\Gamma_{[Z]}(\mathcal{M})\simeq i_{Z + }i^{+}_{Z}\mathcal{M}[-\codim_{X}Z]
$$
Par compatibilité de l'irrégularité avec les morphismes propres \cite[3.6-6]{Mehbsmf}, on en déduit
\begin{equation}\label{compirrpropre}
\Irr^{\ast}_{X^{\prime}}(R\Gamma_{[Z]}(\mathcal{M}))\simeq i_{Z \ast}\Irr^{\ast}_{Z^{\prime}}(i^{+}_{Z}\mathcal{M})
\end{equation}
Le membre de droite de \eqref{compirrpropre} est nul du fait de l'hypothèse que $Z^{\prime}$ est non caractéristique pour $i^{+}_{Z}\mathcal{M}$. D'où la proposition \ref{restrcar}.
\end{proof}
On en déduit le 
\begin{corollaire}\label{restricara}
Si $\mathcal{M}_1,\mathcal{M}_2\in D^{b}_{\hol}(\mathcal{D}_X)$, et si $X^{\prime}$ désigne une sous-variété lisse de $X$ non-caractéristique pour $\mathcal{M}_{1}$,  $\mathcal{M}_{2}$ et $\mathcal{M}_{1}\otimes^{\mathds{L}}_{\mathcal{O}_{X}}\mathcal{M}_{2}$, alors on a un isomorphisme canonique
$$
\Irr^{\ast}_{\Delta_{X^{\prime}}}(i^{+}_{X^{\prime}}\mathcal{M}_{1}\boxtimes i^{+}_{X^{\prime}}\mathcal{M}_{2})\overset{\sim}{\longrightarrow}i^{\prime}_{\Delta_{X^{\prime}}\ast}i^{-1}_{\Delta_{X^{\prime}}}\Irr^{\ast}_{\Delta_{X}}(\mathcal{M}_{1}\boxtimes \mathcal{M}_{2}) 
$$
\end{corollaire}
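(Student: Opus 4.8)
L'idée directrice est de déduire ce corollaire de la proposition \ref{restrcar}, appliquée non pas sur $X$ mais sur la variété produit $X\times X$. Dans les notations de \ref{restrcar}, je prendrais donc la variété ambiante égale à $X\times X$, le sous-espace lisse noté $X^{\prime}$ égal à $X^{\prime}\times X^{\prime}$, et la sous-variété lisse notée $Z$ égale à la diagonale $\Delta_X$. On a alors $\Delta_X\cap(X^{\prime}\times X^{\prime})=\Delta_{X^{\prime}}$, qui est lisse, et le diagramme cartésien analogue à \eqref{diagrammecart} est bien celui reliant $\Delta_{X^{\prime}}$, $\Delta_X$, $X^{\prime}\times X^{\prime}$ et $X\times X$. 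Le complexe auquel on applique la proposition est $\mathcal{M}_{1}\boxtimes\mathcal{M}_{2}$.

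Il faut ensuite vérifier les deux hypothèses de non-caractéristicité de \ref{restrcar}. Pour la première, je me servirais de ce que $\car(\mathcal{M}_{1}\boxtimes\mathcal{M}_{2})=\car\mathcal{M}_{1}\times\car\mathcal{M}_{2}$ et de ce que le fibré conormal de $X^{\prime}\times X^{\prime}$ dans $X\times X$ s'identifie à $T^{\ast}_{X^{\prime}}X\times T^{\ast}_{X^{\prime}}X$; l'intersection de ces deux sous-ensembles du fibré cotangent se scinde alors en le produit des intersections $T^{\ast}_{X^{\prime}}X\cap\car\mathcal{M}_{i}$, de sorte que l'hypothèse que $X^{\prime}$ est non-caractéristique pour $\mathcal{M}_{1}$ et pour $\mathcal{M}_{2}$ entraîne bien que $X^{\prime}\times X^{\prime}$ est non-caractéristique pour $\mathcal{M}_{1}\boxtimes\mathcal{M}_{2}$. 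Pour la seconde hypothèse, le point clé est l'identification $i^{+}_{\Delta_X}(\mathcal{M}_{1}\boxtimes\mathcal{M}_{2})\simeq\mathcal{M}_{1}\otimes^{\mathds{L}}_{\mathcal{O}_{X}}\mathcal{M}_{2}$ déjà rencontrée au début de la partie \ref{irrdiag}. Via l'isomorphisme $\Delta_X\simeq X$ envoyant $\Delta_{X^{\prime}}$ sur $X^{\prime}$, la non-caractéristicité de $\Delta_{X^{\prime}}$ pour $i^{+}_{\Delta_X}(\mathcal{M}_{1}\boxtimes\mathcal{M}_{2})$ se traduit précisément en la non-caractéristicité de $X^{\prime}$ pour $\mathcal{M}_{1}\otimes^{\mathds{L}}_{\mathcal{O}_{X}}\mathcal{M}_{2}$, condition qui figure bien parmi les hypothèses du corollaire.

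Les hypothèses de \ref{restrcar} étant ainsi vérifiées, la proposition fournit l'isomorphisme
$$\Irr^{\ast}_{\Delta_{X^{\prime}}}(i^{+}_{X^{\prime}\times X^{\prime}}(\mathcal{M}_{1}\boxtimes\mathcal{M}_{2}))\overset{\sim}{\longrightarrow}i^{\prime}_{\Delta_{X^{\prime}}\ast}i^{-1}_{\Delta_{X^{\prime}}}\Irr^{\ast}_{\Delta_{X}}(\mathcal{M}_{1}\boxtimes\mathcal{M}_{2}).$$
Il resterait enfin à identifier le membre de gauche au terme voulu, ce qui découle de $i_{X^{\prime}\times X^{\prime}}=i_{X^{\prime}}\times i_{X^{\prime}}$ et de la commutation de l'image inverse au produit tensoriel externe, donnant $i^{+}_{X^{\prime}\times X^{\prime}}(\mathcal{M}_{1}\boxtimes\mathcal{M}_{2})\simeq i^{+}_{X^{\prime}}\mathcal{M}_{1}\boxtimes i^{+}_{X^{\prime}}\mathcal{M}_{2}$. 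L'essentiel du travail, et le seul point véritablement délicat, me semble être la vérification soigneuse des deux conditions non-caractéristiques, en particulier le passage par l'identification sur la diagonale qui transforme l'hypothèse portant sur $\mathcal{M}_{1}\otimes^{\mathds{L}}_{\mathcal{O}_{X}}\mathcal{M}_{2}$ en la condition requise par \ref{restrcar}; le reste est de nature formelle.
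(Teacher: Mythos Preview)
Your proof is correct and follows exactly the same strategy as the paper's own argument: apply Proposition~\ref{restrcar} to the ambient variety $X\times X$ with the pair $(X'\times X',\Delta_X)$, verify the two non-characteristic hypotheses via the product formula $\car(\mathcal{M}_{1}\boxtimes\mathcal{M}_{2})=\car\mathcal{M}_{1}\times\car\mathcal{M}_{2}$ and the diagonal identification $i_{\Delta_X}^{+}(\mathcal{M}_1\boxtimes\mathcal{M}_2)\simeq\mathcal{M}_1\otimes^{\mathds{L}}_{\mathcal{O}_X}\mathcal{M}_2$, and conclude. Your write-up is in fact slightly more explicit than the paper's (notably the final identification $i^{+}_{X'\times X'}(\mathcal{M}_1\boxtimes\mathcal{M}_2)\simeq i^{+}_{X'}\mathcal{M}_1\boxtimes i^{+}_{X'}\mathcal{M}_2$), but the content is the same.
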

\begin{proof}
Du fait de la formule \cite[4.3]{MT}
$$
\car(\mathcal{M}_{1}\boxtimes \mathcal{M}_{2})=\car(\mathcal{M}_{1})\times \car(\mathcal{M}_{2})
$$
la sous-variété $X^{\prime}\times X^{\prime}$ de $X\times X$ est non-caractéristique pour $\mathcal{M}_{1}\boxtimes \mathcal{M}_{2}$. Or on a 
$$
i_{\Delta_X}^{+}(\mathcal{M}_1\boxtimes \mathcal{M}_2)\simeq \mathcal{M}_1\otimes^{\mathds{L}}_{\mathcal{O}_{X}}\mathcal{M}_2
$$
où l'on a implicitement identifié $\Delta_X$ et $X$. A travers cette identification, $\Delta_{X^{\prime}}=\Delta_{X}\cap (X^{\prime}\times X^{\prime})$ correspond à $X^{\prime}$. Par hypothèse, $\Delta_{X^{\prime}}$ est donc non-caractéristique pour $i_{\Delta_X}^{+}(\mathcal{M}_1\boxtimes \mathcal{M}_2)$. Il suffit alors d'appliquer \ref{restrcar}.
\end{proof}
\section{Amplitude de l'irrégularité diagonale dans le cas de modules à support une courbe }\label{amplitude}
\subsection{}\label{notations}
Dans tout ce qui suit, on notera $d$ la dimension de $X$. On se donne des $\mathcal{D}_X$-modules holonomes $\mathcal{M}_1,\mathcal{M}_2$ à supports contenus dans une courbe $C$ et supposés réguliers en dehors d'un point $x_0\in C$. On note $C_1, \dots, C_n$ les composantes irréductibles de $C$. \\ \indent
Soit $D$ une hypersurface de $X$ passant par $x_0$ et ne contenant aucune des $C_i$. Notons $\tilde{C}_1,\dots, \tilde{C}_n$ les composantes connexes de la normalisation $\tilde{C}$ de $C$. Chaque $\tilde{C}_i$ s'envoie bijectivement sur une composante irréductible de $C$. On note $x_i\in \tilde{C}_i$ le point de $\tilde{C}_i$ s'envoyant sur $x_0$. On a $\tilde{C}\times \tilde{C}=\bigsqcup \tilde{C}_i\times \tilde{C}_j$. On note $p:\tilde{C}\longrightarrow X$ la composée de la projection $\tilde{C}\longrightarrow C$ et de l'inclusion $C\hookrightarrow X$.\\ \indent
Puisque $p$ est propre, on sait d'après               \cite[3.6-4]{Mehbsmf}  que pour $i=1,2$ on a
$$
(p_{+}p^{\dag}\mathcal{M}_i)(\ast D)\simeq p_{+}\left[(p^{\dag}\mathcal{M}_i)(\ast p^{-1}(D))\right]\simeq 
p_{+}p^{\dag}(\mathcal{M}_i(\ast D))
$$
D'autre part, $p$ induit une immersion fermée en dehors de $p^{-1}(x_0)$. Par théorème de Kashiwara \cite[1.6.1]{HTT},  l'adjonction canonique \cite[7.1]{MS}
$$
p_{+}p^{\dag}\mathcal{M}_i\longrightarrow \mathcal{M}_i
$$
induit un isomorphisme en dehors de $x_0$. Son cône est donc à support $x_0$. D'après \ref{coholocalsupport}, on en déduit une identification 
\begin{equation}\label{identification}
p_{+}p^{\dag}(\mathcal{M}_i(\ast D))\simeq  \mathcal{M}_i(\ast D)
\end{equation}
Toujours par théorème de Kashiwara, on sait que pour $k\neq 0$, le module $\mathcal{H}^{k}p^{\dag}\mathcal{M}_i$ est à support dans $p^{-1}(x_0)$. On en déduit que  $p^{\dag}(\mathcal{M}_i(\ast D))\simeq (p^{\dag}\mathcal{M}_i)(\ast p^{-1}(x_0))$ est concentré en degré $0$. Autrement dit, il s'agit d'un module holonome. On note $\mathcal{M}_{i,j}(\ast x_j)$ la restriction de $p^{\dag}(\mathcal{M}_i(\ast D))$ à $\tilde{C}_j$ et $\irr_{i,j}$ le nombre d'irrégularité de $\mathcal{M}_{i,j}(\ast x_j)$ en $x_j$. On pose aussi
$$
\irr_i :=\displaystyle{\sum_{j=1}^{n}}   \irr_{i,j}  
$$
Notons enfin $\delta$ le module Dirac en $x_0$. Il s'agit d'un module régulier. Le but de cette section est de montrer la 
\begin{proposition}\label{lemme0}
Le complexe $\Irr^{\ast}_{\Delta_{X}}(\mathcal{M}_1\boxtimes  \mathcal{M}_2) $ est concentré en degrés $2d-1$ et  $2d$.
\end{proposition}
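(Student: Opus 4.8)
Posons $\mathcal{N} := \mathcal{M}_1 \boxtimes \mathcal{M}_2$, de sorte que $X \times X$ est de dimension $2d$. Je commencerais par localiser le problème: comme $\mathcal{M}_1$ et $\mathcal{M}_2$ sont réguliers hors de $x_0$ et que la régularité est stable par produit tensoriel externe, $\mathcal{N}$ est régulier en dehors de $(\{x_0\} \times X) \cup (X \times \{x_0\})$, réunion qui ne rencontre $\Delta_X$ qu'au point $(x_0,x_0)$. Le complexe $\Irr^{\ast}_{\Delta_X}(\mathcal{N})$ est donc à support $(x_0,x_0)$, et il suffit d'en contrôler le germe en ce point. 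La majoration est alors immédiate: par définition \ref{defirr}, $\Irr^{\ast}_{\Delta_X}(\mathcal{N})$ est la restriction à $\Delta_X$ de $\mathbf{S}(R\mathcal{N}(\ast \Delta_X))$, et le complexe des solutions d'un module holonome sur la variété $X \times X$ de dimension $2d$ est concentré en degrés $[0,2d]$. Tout le contenu de l'énoncé réside donc dans la minoration, c'est-à-dire l'annulation en degrés strictement inférieurs à $2d-1$.

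Pour cette minoration, le plan est de se ramener à la normalisation $p : \tilde{C} \to X$ introduite en \ref{notations}, où le rôle joué par la diagonale redevient celui d'une hypersurface. Quitte à localiser le long de $D$ — ce qui, $D \cap C$ étant fini, ne modifie chaque $\mathcal{M}_i$ que par le complexe régulier à support ponctuel $R\Gamma_{[D]}\mathcal{M}_i$ — l'identification \eqref{identification} permet d'écrire $\mathcal{M}_1(\ast D) \boxtimes \mathcal{M}_2(\ast D) \simeq (p \times p)_{+}(p^{\dag}\mathcal{M}_1(\ast D) \boxtimes p^{\dag}\mathcal{M}_2(\ast D))$. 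La compatibilité \ref{compmorphismepropre} appliquée au morphisme propre $p \times p$ ramène alors le calcul de l'irrégularité diagonale, au décalage $[2d]$ contre $[2]$ près, à celui de l'irrégularité de $p^{\dag}\mathcal{M}_1(\ast D) \boxtimes p^{\dag}\mathcal{M}_2(\ast D)$ le long de $(p \times p)^{-1}(\Delta_X)$ sur la surface lisse $\tilde{C} \times \tilde{C}$. Sur chaque bloc diagonal $\tilde{C}_j \times \tilde{C}_j$, la trace de $(p \times p)^{-1}(\Delta_X)$ est la diagonale lisse $\Delta_{\tilde{C}_j}$, qui est bien une hypersurface de la surface. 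Le théorème de perversité du faisceau d'irrégularité le long d'une hypersurface \cite[2.1.6]{Mehbgro} s'y applique et montre que la contribution correspondante est, à décalage près, un faisceau pervers; comme elle est à support le point $(x_j, x_j)$, elle est concentrée en un unique degré, que les décalages de \ref{compmorphismepropre} placent précisément en degré $2d-1$. C'est l'annulation voulue en degrés $< 2d-1$ pour le module localisé.

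Il resterait à revenir de $\mathcal{M}_1(\ast D) \boxtimes \mathcal{M}_2(\ast D)$ au complexe $\mathcal{N}$ lui-même. L'écart est mesuré par les termes de cohomologie locale $R\Gamma_{[D]}\mathcal{M}_i$, qui sont réguliers et à support fini; à l'aide des triangles \eqref{irrdiagetponct} et \eqref{triangle2}, on vérifie que leur contribution à $\Irr^{\ast}_{\Delta_X}(\mathcal{N})_{(x_0,x_0)}$ ne peut apparaître qu'en degré maximal $2d$, sans faire redescendre l'amplitude en dessous de $2d-1$. En combinant avec la majoration du premier paragraphe, on obtiendrait la concentration de $\Irr^{\ast}_{\Delta_X}(\mathcal{N})$ en degrés $2d-1$ et $2d$.

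La difficulté principale — et l'unique point où la singularité éventuelle de $C$ en $x_0$ intervient — sera le traitement des blocs non diagonaux $\tilde{C}_i \times \tilde{C}_j$ avec $i \neq j$: la trace de $(p \times p)^{-1}(\Delta_X)$ y est de dimension $0$, à savoir les points où les branches $i$ et $j$ de $C$ se croisent, de sorte que le théorème de perversité, énoncé pour une hypersurface, ne s'applique pas directement. Je contournerais cet obstacle en calculant l'irrégularité le long d'un tel point isolé au moyen des triangles \eqref{triangle2} et \eqref{defirrprime}, à partir des mêmes données de solutions locales que sur les blocs diagonaux, ce qui la maintient dans la fourchette $[2d-1, 2d]$; de façon équivalente, il s'agit de l'injectivité en bas degré du morphisme de Künneth sur les solutions, garantie par la positivité de l'irrégularité. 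Le recollement final des contributions par $R(p \times p)_{\ast}$, exact car $p$ est fini, fournirait l'énoncé.
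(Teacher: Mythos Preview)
Your approach is essentially the paper's: localize along $D$, push to the normalization via \eqref{identification} and \ref{compmorphismepropre}, apply perversity on diagonal blocks, compute off-diagonal blocks directly, then reassemble via the local cohomology triangles. Two points deserve correction, though neither breaks the argument.

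First, your degree bookkeeping is off by one. On a diagonal block $\tilde{C}_j\times\tilde{C}_j$ the perverse sheaf $\Irr^{\ast}_{\Delta_{\tilde{C}_j}}(\cdots)$ supported at $(x_j,x_j)$ is concentrated in degree $2$, and the shift $[2-2d]$ coming from \ref{compmorphismepropre} places it in degree $2d$, not $2d-1$. In fact both $\Irr^{\ast}_{\Delta_X}(\mathcal{M}_1(\ast D)\boxtimes\mathcal{M}_2(\ast D))$ and $\Irr^{\ast}_{\Delta_X}(\delta\boxtimes\mathcal{M}_2)$ live purely in degree $2d$ (this is the content of \ref{calcul-1} and \ref{calcul2}); the degree $2d-1$ only appears when you feed these into the triangles of the $3\times 3$ diagram, because $\mathcal{C}_i=R\Gamma_{[D]}\mathcal{M}_i$ has cohomology in degrees $0$ \emph{and} $1$.

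Second, you overcomplicate the off-diagonal blocks. Since $\mathcal{M}_{1,i}(\ast x_i)$ and $\mathcal{M}_{2,j}(\ast x_j)$ are already localized, one has $R\Gamma_{[x_i]}\mathcal{M}_{1,i}(\ast x_i)\simeq 0$, so the third term of \eqref{triangle2} vanishes outright and $\Irr^{\ast}_{(x_i,x_j)}(\mathcal{M}_{1,i}(\ast x_i)\boxtimes\mathcal{M}_{2,j}(\ast x_j))\simeq \mathbf{S}(\mathcal{M}_{1,i}(\ast x_i))_{x_i}\otimes\mathbf{S}(\mathcal{M}_{2,j}(\ast x_j))_{x_j}$, a tensor of complexes each concentrated in degree $1$, hence concentrated in degree $2$. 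No appeal to ``K\"unneth injectivity'' or ``positivity of irregularity'' is needed.
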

\subsection{Calcul de $\Irr^{\ast}_{\Delta_{X}}(\delta\boxtimes \mathcal{M}_{2})$}\label{refamettre}
En appliquant $\Irr^{\ast}_{\Delta_{X}}$ au triangle distingué
$$
0\longrightarrow \delta\boxtimes  R\Gamma_{[D]}\mathcal{M}_2\longrightarrow  \delta\boxtimes  \mathcal{M}_2 \longrightarrow   \delta\boxtimes  \mathcal{M}_2(\ast D) \longrightarrow 0
$$
on obtient une identification 
$$
\Irr^{\ast}_{\Delta_{X}}(\delta\boxtimes  \mathcal{M}_2)\simeq 
\Irr^{\ast}_{\Delta_{X}}(\delta\boxtimes  \mathcal{M}_2(\ast D))
$$
On est donc ramené à calculer $\Irr^{\ast}_{\Delta_{X}}(\delta\boxtimes  \mathcal{M}_2(\ast D))$.
Or puisque  $\delta\otimes^{\mathds{L}}_{\mathcal{O}_{X}} \mathcal{M}_{2}(\ast D)\simeq 0$, le triangle \eqref{irrdiagetponct} donne une identification 
$$
\Irr^{\ast}_{\Delta_{X}}(\delta\boxtimes \mathcal{M}_{2} (\ast D))_{(x_0,x_0)} \simeq  \Irr^{\ast}_{(x_0,x_0)}(\delta\boxtimes \mathcal{M}_{2} (\ast D)) 
$$
Puisque $D$ passe par $x_0$, on a
$$
R\Gamma_{[x_0]}(\mathcal{M}_{2} (\ast D))\simeq R\Gamma_{[x_0]}R\Gamma_{[D]}(\mathcal{M}_{2} (\ast D))\simeq 0
$$
Le triangle $(3.3.1)$ donne donc une identification 
$$
\Irr^{\ast}_{(x_0,x_0)}(\delta\boxtimes \mathcal{M}_{2} (\ast D)) \simeq \mathbf{S}(\delta)_{x_0}\otimes \mathbf{S}(\mathcal{M}_{2}(\ast D))_{x_0}
$$
Or par compatibilité du foncteur solution avec l'image directe propre \cite[4.2.5]{HTT}, l'identification \eqref{identification} donne 
\begin{align*}
\mathbf{S}(\mathcal{M}_{2}(\ast D))_{x_0}&\simeq
\mathbf{S}(p_{+}p^{\dag}(\mathcal{M}_2(\ast D)))_{x_0}\\
 & \simeq (p_{\ast}\mathbf{S}(p^{\dag}(\mathcal{M}_{2}(\ast D))))_{x_0}[1-d] \\ \indent
 &\simeq\displaystyle{\bigoplus_{j=1}^{n}}
\mathbf{S}(\mathcal{M}_{2,j}(\ast x_j))_{x_j}[1-d]
\end{align*}
On a ainsi obtenu le 
\begin{lemme}\label{calcul-1}
Le complexe $\Irr^{\ast}_{\Delta_{X}}(\delta\boxtimes \mathcal{M}_{2} )_{(x_0,x_0)} $ est concentré en degré $2d$ et on a 
$$
\dim \mathcal{H}^{2d}\Irr^{\ast}_{\Delta_{X}}(\delta\boxtimes \mathcal{M}_{2} )_{(x_0,x_0)} =\irr_2
$$
\end{lemme}

\subsection{Calcul de $\Irr^{\ast}_{\Delta_{X}}( \mathcal{M}_1 (\ast D)
\boxtimes \mathcal{M}_2 ( \ast D)) $}
D'après l' identification \eqref{identification} pour $i=1,2$, on a
\begin{align*}
\Irr^{\ast}_{\Delta_{X}}( \mathcal{M}_1 (\ast D)
\boxtimes \mathcal{M}_2 ( \ast D))& \simeq \Irr^{\ast}_{\Delta_{X}}( p_{+}p^{\dag}(\mathcal{M}_1 (\ast D)
)\boxtimes p_{+}p^{\dag}(\mathcal{M}_2 ( \ast D))) \\
&\simeq 
\Irr^{\ast}_{\Delta_{X}}\left( (p\times p)_{+}(p^{\dag}(\mathcal{M}_1 (\ast D)
)\boxtimes p^{\dag}(\mathcal{M}_2 ( \ast D)))\right)
\end{align*}
où le second isomorphisme provient de \cite[1.5.30]{HTT}.
Par compatibilité \ref{compmorphismepropre} de l'irrégularité avec l'image directe propre, il vient
$$
\Irr^{\ast}_{\Delta_{X}}( \mathcal{M}_1 (\ast D)
\boxtimes \mathcal{M}_2 ( \ast D))\simeq (p\times p)_{\ast}\Irr^{\ast}_{\Delta_{X}^{\prime}}(p^{\dag}(\mathcal{M}_1 (\ast D)
)\boxtimes p^{\dag}(\mathcal{M}_2 (\ast D)
))[2-2d]$$
où on a noté $\Delta_{X}^{\prime}$ pour $(p\times p)^{-1}(\Delta_{X})$. Donc  $\Irr^{\ast}_{\Delta_{X}}( \mathcal{M}_1 (\ast D)
\boxtimes \mathcal{M}_2 ( \ast D))_{(x_0,x_0)}$ s'identifie à 
$$
\bigoplus_{(i,j)}
\Irr^{\ast}_{\Delta_{X}^{\prime}\cap (\tilde{C}_i\times \tilde{C}_j)}(\mathcal{M}_{1,i} (\ast x_i)\boxtimes \mathcal{M}_{2,j} (\ast x_j))_{(x_i,x_j)}[2-2d]
$$
Il y a deux cas possibles:
\begin{enumerate}
\item si $i\neq j$, on a $(\tilde{C}_i\times \tilde{C}_j)\cap \Delta_{X}^{\prime}=(x_i,x_j)$.
\item si $i=j$, on a $(\tilde{C}_i\times \tilde{C}_i)\cap \Delta_{X}^{\prime}=\Delta_{\tilde{C}_i}$
\end{enumerate}
Dans le premier cas, le triangle \eqref{triangle2} donne 
\begin{equation}\label{degre2}
\Irr^{\ast}_{(x_i,x_j)}(\mathcal{M}_{1,i} (\ast x_i)\boxtimes \mathcal{M}_{2,j}(\ast x_j))_{(x_i,x_j)}\simeq \mathbf{S}(\mathcal{M}_{1,i} (\ast x_i))_{x_i}\otimes \mathbf{S}(\mathcal{M}_{2,j} (\ast x_j))_{x_j}
\end{equation}
En particulier, le membre de gauche de \eqref{degre2} est concentré en degré $2$ et on a 
$$
\dim \mathcal{H}^{2}\Irr^{\ast}_{(x_i,x_j)}(\mathcal{M}_{1,i} (\ast x_i)\boxtimes \mathcal{M}_{2,j}(\ast x_j))_{(x_i,x_j)}=\irr_{1,i}\irr_{2,j}
$$
Dans le second cas, on observe que la diagonale $\Delta_{\tilde{C}_i}$  est une hypersurface de $\tilde{C}_i\times \tilde{C}_i$. Par théorème de perversité de Mebkhout \cite[2.1.6]{Mehbgro}, on en déduit que le complexe $$\Irr^{\ast}_{\Delta_{\tilde{C}_i}}(\mathcal{M}_{1,i}(\ast x_i)\boxtimes \mathcal{M}_{2,i}(\ast x_i))$$ est pervers. Puisqu'il est nul en dehors de $(x_i,x_i)$, il s'agit d'un faisceau gratte-ciel concentré en degré $2$. Le triangle 
\eqref{irrdiagetponct} donne
$$
\dim \mathcal{H}^{2}\Irr^{\ast}_{\Delta_{\tilde{C}_i}}(\mathcal{M}_{1,i} (\ast x_i)\boxtimes \mathcal{M}_{2,i} (\ast x_i))=\irr_{1,i}\irr_{2,i}+\irr_{x_i}(\mathcal{M}_{1,i}(\ast x_i)\otimes_{\mathcal{O}_{\tilde{C}_i}}  \mathcal{M}_{2,i}(\ast x_i))
$$
On en déduit le 
\begin{lemme}\label{calcul2}
Le complexe $\Irr^{\ast}_{\Delta_{X}}( \mathcal{M}_1 (\ast D)
\boxtimes \mathcal{M}_2 ( \ast D)) $ est concentré en degré $2d$, et on a 
$$\dim \mathcal{H}^{2d}\Irr^{\ast}_{\Delta_{X}}( \mathcal{M}_1 (\ast D)
\boxtimes \mathcal{M}_2 ( \ast D)) = \irr_{1}\irr_{2}+\displaystyle{\sum_{i=1}^{n}}\irr_{x_i}(\mathcal{M}_{1,i}(\ast x_i)\otimes_{\mathcal{O}_{\tilde{C}_i}}  \mathcal{M}_{2,i}(\ast x_i))$$
\end{lemme}

\subsection{Preuve de \ref{lemme0}}\label{amplitudebis}
Notons $\mathcal{C}_i:=R\Gamma_{[D]}\mathcal{M}_i$. Il s'agit d'un complexe n'ayant que du $\mathcal{H}^{0}$ et du $\mathcal{H}^{1}$, avec $\mathcal{H}^{0}\mathcal{C}_i$ et $\mathcal{H}^{1}\mathcal{C}_i$  à support $x_0$. Les triangles de cohomologie locale
\begin{equation}\label{tricohloc}
\xymatrix{
\mathcal{C}_i\ar[r] &  \mathcal{M}_i\ar[r]&  \mathcal{M}_i (\ast D)\ar[r]^-{+1}&
}
\end{equation}
pour $i=1,2$ fournissent un diagramme
\begin{equation}\label{gros diagramme}
\xymatrix{
\mathcal{C}_1\boxtimes \mathcal{C}_2\ar[r] \ar[d]&           \mathcal{M}_1\boxtimes  \mathcal{C}_2 \ar[r] \ar[d]&                \mathcal{M}_1 (\ast D)\boxtimes \mathcal{C}_2 \ar[d] \\
\mathcal{C}_1
\boxtimes \mathcal{M}_2\ar[r] \ar[d]&          \mathcal{M}_1
\boxtimes \mathcal{M}_2\ar[r] \ar[d]&              \mathcal{M}_1 (\ast D)
\boxtimes\mathcal{M}_2  \ar[d] \\
\mathcal{C}_1
\boxtimes  \mathcal{M}_2 (\ast D) \ar[r] &          \mathcal{M}_1
\boxtimes \mathcal{M}_2 (\ast D)
\ar[r] &               \mathcal{M}_1 (\ast D)
\boxtimes \mathcal{M}_2 ( \ast D)
}
\end{equation}
à lignes et colonnes distinguées. Par application de $\Irr^{\ast}_{\Delta_{X}}$, on obtient un digramme 
\begin{equation}\label{tresgrosdiagramme}
\xymatrix{
 0&           \Irr^{\ast}_{\Delta_{X}}(\mathcal{M}_1\boxtimes  \mathcal{C}_2 )\ar[l] &                \Irr^{\ast}_{\Delta_{X}}(\mathcal{M}_1 (\ast D)\boxtimes \mathcal{C}_2) \ar[l]_-{\sim}  \\
\Irr^{\ast}_{\Delta_{X}}(     \mathcal{C}_1
\boxtimes \mathcal{M}_2 ) \ar[u]&   \Irr^{\ast}_{\Delta_{X}}(       \mathcal{M}_1
\boxtimes \mathcal{M}_2)\ar[l] \ar[u]&        \Irr^{\ast}_{\Delta_{X}}( \mathcal{M}_1 (\ast D)
\boxtimes\mathcal{M}_2)\ar[u]\ar[l]  \\
 \Irr^{\ast}_{\Delta_{X}}(\mathcal{C}_1
\boxtimes  \mathcal{M}_2 (\ast D)       )  \ar[u]^-{\wr}  &    \ar[u] \Irr^{\ast}_{\Delta_{X}}(   \mathcal{M}_1
\boxtimes \mathcal{M}_2 (\ast D))
\ar[l] \ar[u]&   \Irr^{\ast}_{\Delta_{X}}( \mathcal{M}_1 (\ast D)
\boxtimes \mathcal{M}_2 ( \ast D)) \ar[u]\ar[l] 
}
\end{equation}
Pour démontrer la proposition \ref{lemme0}, il suffit donc de démontrer que  $\Irr^{\ast}_{\Delta_{X}}(     \mathcal{C}_1
\boxtimes \mathcal{M}_2 )$ et  $\Irr^{\ast}_{\Delta_{X}}( \mathcal{M}_1 (\ast D)
\boxtimes\mathcal{M}_2)$ sont concentrés en degrés $2d-1$ et  $2d$. \\ \indent
Traitons le cas du complexe $\Irr^{\ast}_{\Delta_{X}}(     \mathcal{C}_1
\boxtimes \mathcal{M}_2 )$. D'après \ref{calcul-1}, la suite spectrale d'hypercohomologie 
\begin{equation}\label{ssdeg}
E_{2}^{pq}=\mathcal{H}^{p}\Irr^{\ast}_{\Delta_{X}}((\mathcal{H}^{-q}\mathcal{C}_1)
\boxtimes \mathcal{M}_2)  \Longrightarrow \mathcal{H}^{p+q}\Irr^{\ast}_{\Delta_{X}}(\mathcal{C}_1
\boxtimes \mathcal{M}_2)
\end{equation}
n'a que les termes $E_{2}^{2d,-1}$ et $E_{2}^{2d,0}$ pour termes éventuellement non nuls. La suite spectrale \eqref{ssdeg} dégénère donc en page $2$, et on en déduit que $\Irr^{\ast}_{\Delta_{X}}(\mathcal{C}_1
\boxtimes  \mathcal{M}_2 )$ est concentré en degrés $2d-1$ et $2d$. \\ \indent 
Pour le complexe $\Irr^{\ast}_{\Delta_{X}}( \mathcal{M}_1 (\ast D)
\boxtimes\mathcal{M}_2)$, on observe en contemplant  \eqref{tresgrosdiagramme} qu'il suffit de montrer que  $ \Irr^{\ast}_{\Delta_{X}}(\mathcal{M}_1 (\ast D)\boxtimes \mathcal{C}_2)$ et $\Irr^{\ast}_{\Delta_{X}}( \mathcal{M}_1 (\ast D)
\boxtimes \mathcal{M}_2 ( \ast D))$ sont concentrés en degrés $2d-1$ et $2d$. Pour $ \Irr^{\ast}_{\Delta_{X}}(\mathcal{M}_1 (\ast D)\boxtimes \mathcal{C}_2)$, il s'agit par symétrie du même argument que pour $\Irr^{\ast}_{\Delta_{X}}(\mathcal{C}_1
\boxtimes \mathcal{M}_2 )$. Pour $\Irr^{\ast}_{\Delta_{X}}( \mathcal{M}_1 (\ast D)
\boxtimes \mathcal{M}_2 ( \ast D))$, cela découle de \ref{calcul2}. \\ \indent

Notons $\mathcal{N}_i:=\mathcal{H}^{0}R\Gamma_{[D]}\mathcal{M}_i$. C'est un sous-module de $\mathcal{M}_i$ à support $x_0$. On a le
\begin{corollaire}\label{petite réduction}
Dans les conditions de \ref{notations}, la nullité de $\Irr^{\ast}_{\Delta_{X}}(\mathcal{M}_1\boxtimes  \mathcal{M}_2)$ entraine la nullité de $\Irr^{\ast}_{\Delta_{X}}(\mathcal{M}_1/\mathcal{N}_1\boxtimes  \mathcal{M}_2)$.
\end{corollaire}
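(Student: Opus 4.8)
Le plan est de tensoriser extérieurement par $\mathcal{M}_2$ la suite exacte courte de $\mathcal{D}_X$-modules holonomes
$$
0\longrightarrow \mathcal{N}_1\longrightarrow \mathcal{M}_1\longrightarrow \mathcal{M}_1/\mathcal{N}_1\longrightarrow 0,
$$
puis d'appliquer le foncteur triangulé $\Irr^\ast_{\Delta_X}$. Le produit tensoriel externe étant exact, on obtient un triangle distingué
$$
\Irr^\ast_{\Delta_X}(\mathcal{N}_1\boxtimes\mathcal{M}_2)\longrightarrow \Irr^\ast_{\Delta_X}(\mathcal{M}_1\boxtimes\mathcal{M}_2)\longrightarrow \Irr^\ast_{\Delta_X}(\mathcal{M}_1/\mathcal{N}_1\boxtimes\mathcal{M}_2)\overset{+1}{\longrightarrow}.
$$
Sous l'hypothèse de nullité du terme médian, ce triangle donne $\Irr^\ast_{\Delta_X}(\mathcal{M}_1/\mathcal{N}_1\boxtimes\mathcal{M}_2)\simeq \Irr^\ast_{\Delta_X}(\mathcal{N}_1\boxtimes\mathcal{M}_2)[1]$, si bien qu'on est ramené à établir la nullité de $\Irr^\ast_{\Delta_X}(\mathcal{N}_1\boxtimes\mathcal{M}_2)$. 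Comme $\mathcal{N}_1=\mathcal{H}^0\mathcal{C}_1$ est à support $x_0$, c'est une somme de $m$ copies du Dirac $\delta$, et le lemme \ref{calcul-1} identifie $\Irr^\ast_{\Delta_X}(\mathcal{N}_1\boxtimes\mathcal{M}_2)$ à un gratte-ciel placé en degré $2d$, de dimension $m\,\irr_2$. Tout revient donc à montrer que $m\,\irr_2=0$.

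Pour cette annulation, le point crucial sera de localiser le \emph{second} facteur, de façon à préserver le sous-module $\mathcal{N}_1$ de $\mathcal{M}_1$ au lieu de le détruire. J'appliquerais $\Irr^\ast_{\Delta_X}$ à la colonne médiane de \eqref{gros diagramme}, à savoir le triangle déduit de \eqref{tricohloc} pour $i=2$ tensorisé par $\mathcal{M}_1$ :
$$
\Irr^\ast_{\Delta_X}(\mathcal{M}_1\boxtimes\mathcal{C}_2)\longrightarrow \Irr^\ast_{\Delta_X}(\mathcal{M}_1\boxtimes\mathcal{M}_2)\longrightarrow \Irr^\ast_{\Delta_X}(\mathcal{M}_1\boxtimes\mathcal{M}_2(\ast D))\overset{+1}{\longrightarrow}.
$$
Comme dans la preuve de \ref{lemme0}, la suite spectrale d'hypercohomologie jointe au lemme \ref{calcul-1} (appliqué au facteur $\mathcal{M}_1$ et aux modules ponctuels $\mathcal{H}^0\mathcal{C}_2$, $\mathcal{H}^1\mathcal{C}_2$) montre que $\Irr^\ast_{\Delta_X}(\mathcal{M}_1\boxtimes\mathcal{C}_2)$ est concentré en degrés $2d-1$ et $2d$. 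La nullité du terme médian force alors l'isomorphisme $\Irr^\ast_{\Delta_X}(\mathcal{M}_1\boxtimes\mathcal{M}_2(\ast D))\simeq \Irr^\ast_{\Delta_X}(\mathcal{M}_1\boxtimes\mathcal{C}_2)[1]$, complexe concentré en degrés $\leq 2d-1$ ; en particulier son $\mathcal{H}^{2d}$ est nul.

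Il restera à constater que $\mathcal{N}_1$ contribue précisément en degré maximal $2d$ à ce complexe. Pour cela j'utiliserais la ligne inférieure de \eqref{gros diagramme}, c'est-à-dire le triangle déduit de \eqref{tricohloc} pour $i=1$ tensorisé par $\mathcal{M}_2(\ast D)$ :
$$
\Irr^\ast_{\Delta_X}(\mathcal{C}_1\boxtimes\mathcal{M}_2(\ast D))\longrightarrow \Irr^\ast_{\Delta_X}(\mathcal{M}_1\boxtimes\mathcal{M}_2(\ast D))\longrightarrow \Irr^\ast_{\Delta_X}(\mathcal{M}_1(\ast D)\boxtimes\mathcal{M}_2(\ast D))\overset{+1}{\longrightarrow}.
$$
Le troisième terme étant concentré en degré $2d$ par \ref{calcul2}, la flèche $\mathcal{H}^{2d}\Irr^\ast_{\Delta_X}(\mathcal{C}_1\boxtimes\mathcal{M}_2(\ast D))\to \mathcal{H}^{2d}\Irr^\ast_{\Delta_X}(\mathcal{M}_1\boxtimes\mathcal{M}_2(\ast D))$ est injective. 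Or seul $\mathcal{H}^0\mathcal{C}_1=\mathcal{N}_1$ intervient en degré $2d$, et le calcul mené en \ref{refamettre} donne $\dim\mathcal{H}^{2d}\Irr^\ast_{\Delta_X}(\mathcal{N}_1\boxtimes\mathcal{M}_2(\ast D))=m\,\irr_2$. On obtient ainsi une injection d'un espace de dimension $m\,\irr_2$ dans un espace nul, d'où $m\,\irr_2=0$, ce qui conclut.

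L'obstacle principal est le choix asymétrique de la localisation : en localisant $\mathcal{M}_2$ tout en gardant $\mathcal{M}_1$ intact, les sections ponctuelles $\mathcal{N}_1$ de $\mathcal{M}_1$ subsistent et se manifestent en degré cohomologique maximal $2d$, là précisément où la nullité imposée par l'hypothèse peut être détectée ; si l'on localisait $\mathcal{M}_1$, le module $\mathcal{N}_1$ disparaîtrait et l'argument échouerait. Le cœur technique est la comptabilité des degrés, elle-même tributaire de la concentration en degré $2d$ de \ref{calcul2} et donc du théorème de perversité de Mebkhout qui la sous-tend.
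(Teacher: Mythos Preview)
There is a variance error running through your argument: the irregularity functor $\Irr^\ast_{\Delta_X}$, being defined via the solution functor $\mathbf{S}=R\mathcal{H}om_{\mathcal{D}}(-,\mathcal{O})$, is \emph{contravariant} in $\mathcal{M}$ (compare the reversal of arrows between \eqref{gros diagramme} and \eqref{tresgrosdiagramme} in the paper). All three of your triangles are therefore written in the wrong order. For the first one, the correct triangle is
$$\Irr^\ast_{\Delta_X}((\mathcal{M}_1/\mathcal{N}_1)\boxtimes\mathcal{M}_2)\longrightarrow \Irr^\ast_{\Delta_X}(\mathcal{M}_1\boxtimes\mathcal{M}_2)\longrightarrow \Irr^\ast_{\Delta_X}(\mathcal{N}_1\boxtimes\mathcal{M}_2)\overset{+1}{\longrightarrow},$$
so the shift is $[-1]$, not $[+1]$. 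The same reversal affects your second triangle, and then your key step fails: with the correct shift one gets $\Irr^\ast_{\Delta_X}(\mathcal{M}_1\boxtimes\mathcal{M}_2(\ast D))\simeq \Irr^\ast_{\Delta_X}(\mathcal{M}_1\boxtimes\mathcal{C}_2)[-1]$, which is concentrated in degrees $2d,2d+1$ rather than $\leq 2d-1$, so its $\mathcal{H}^{2d}$ need not vanish and your injection argument collapses. (A symmetric variant---localising the \emph{first} factor instead---does salvage the route to $m\,\irr_2=0$, but not the argument as you have written it.)

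More to the point, the detour through $m\,\irr_2=0$ is unnecessary. The paper's proof stops at the first triangle: once the correct identification $A:=\Irr^\ast_{\Delta_X}((\mathcal{M}_1/\mathcal{N}_1)\boxtimes\mathcal{M}_2)\simeq B[-1]$ with $B:=\Irr^\ast_{\Delta_X}(\mathcal{N}_1\boxtimes\mathcal{M}_2)$ is in hand, one observes that $A$ lives in degrees $2d-1,2d$ by \ref{lemme0}, while $B$ lives in degree $2d$ alone by \ref{calcul-1}, so $B[-1]$ lives in degree $2d+1$. These amplitudes are disjoint, hence $A=0$. No further triangles, no identification of the dimension $m\,\irr_2$, are needed.
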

\begin{proof}
On a une suite exacte
$$
0\longrightarrow \mathcal{N}_1\longrightarrow  \mathcal{M}_1 \longrightarrow   \mathcal{M}_1/\mathcal{N}_1 \longrightarrow 0
$$
On en déduit une suite exacte 
$$
0\longrightarrow \mathcal{N}_1\boxtimes  \mathcal{M}_2\longrightarrow  \mathcal{M}_1\boxtimes  \mathcal{M}_2 \longrightarrow   \mathcal{M}_1/\mathcal{N}_1\boxtimes  \mathcal{M}_2 \longrightarrow 0
$$
D'où un triangle distingué
$$
\Irr^{\ast}_{\Delta_{X}}(\mathcal{M}_1/\mathcal{N}_1\boxtimes  \mathcal{M}_2)\longrightarrow  \Irr^{\ast}_{\Delta_{X}}(\mathcal{M}_1\boxtimes  \mathcal{M}_2) \longrightarrow   \Irr^{\ast}_{\Delta_{X}}(\mathcal{N}_1\boxtimes  \mathcal{M}_2) \overset{+1}{\longrightarrow} 
$$
Alors on a par hypothèse
$$
\Irr^{\ast}_{\Delta_{X}}(\mathcal{M}_1/\mathcal{N}_1\boxtimes  \mathcal{M}_2)\simeq \Irr^{\ast}_{\Delta_{X}}(\mathcal{N}_1\boxtimes  \mathcal{M}_2)[-1]
$$
D'après \ref{lemme0}, on sait que $\Irr^{\ast}_{\Delta_{X}}(\mathcal{M}_1/\mathcal{N}_1\boxtimes  \mathcal{M}_2)$ et $\Irr^{\ast}_{\Delta_{X}}(\mathcal{N}_1\boxtimes  \mathcal{M}_2)$ sont concentrés en degrés $2d-1$ et $2d$. On a donc
$$
\mathcal{H}^{2d-1}\Irr^{\ast}_{\Delta_{X}}(\mathcal{M}_1/\mathcal{N}_1\boxtimes  \mathcal{M}_2)\simeq \mathcal{H}^{2d-2}\Irr^{\ast}_{\Delta_{X}}(\mathcal{N}_1\boxtimes  \mathcal{M}_2)\simeq 0
$$
et 
$$
\mathcal{H}^{2d}\Irr^{\ast}_{\Delta_{X}}(\mathcal{M}_1/\mathcal{N}_1\boxtimes  \mathcal{M}_2)\simeq \mathcal{H}^{2d-1}\Irr^{\ast}_{\Delta_{X}}(\mathcal{N}_1\boxtimes  \mathcal{M}_2)
$$
Pour démontrer \ref{petite réduction}, il suffit donc de montrer que 
$\Irr^{\ast}_{\Delta_{X}}(\mathcal{N}_1\boxtimes  \mathcal{M}_2)$ est concentré en degré $2d$. C'est bien le cas d'après \ref{calcul-1}.
\end{proof}
\section{Preuve du théorème \ref{theo2prime}}
On va raisonner par récurrence sur la dimension $d$ de $X$. 
\subsection{Le cas d'une courbe}\label{cascourbe}
Pour les besoins de la preuve, on va démontrer dans le cas d'une courbe un énoncé sensiblement plus général que le théorème \ref{theo2prime}, à savoir la
\begin{proposition}\label{uneprop}
Soient $\mathcal{M}_1$ et $\mathcal{M}_2$ des modules holonomes définis sur un voisinage de $0\in \mathds{C}$ et supposés lisses en dehors de $0$. On suppose que 
$$\Irr^{\ast}_{\Delta_{X}}(\mathcal{M}_1
\boxtimes \mathcal{M}_2) \simeq 0$$
Alors $\mathcal{M}_1$ ou $\mathcal{M}_2$ est régulier.
\end{proposition}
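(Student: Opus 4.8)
Le plan est de raisonner par l'absurde. Comme un module holonome sur un voisinage de $0$, lisse en dehors de $0$, est régulier si et seulement si son irrégularité en $0$ est nulle (il s'insère dans le triangle $R\Gamma_{[0]}\mathcal{M}\to\mathcal{M}\to\mathcal{M}(\ast 0)$ dont le terme de gauche est ponctuel donc régulier), il suffit de montrer que $\irr_1=0$ ou $\irr_2=0$, où $\irr_i:=\irr_{0}(\mathcal{M}_i(\ast 0))$. Je suppose donc $\irr_1\geq 1$ et $\irr_2\geq 1$ et je cherche une contradiction. On se place dans le cadre de \ref{notations}, la courbe $C$ étant $X$ elle-même (de dimension $d=1$), avec $n=1$ et pour hypersurface $D=\{0\}$. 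Grâce au corollaire \ref{petite réduction} et à sa version symétrique, on peut remplacer $\mathcal{M}_i$ par $\mathcal{M}_i/\mathcal{N}_i$ sans modifier l'hypothèse $\Irr^{\ast}_{\Delta_X}(\mathcal{M}_1\boxtimes\mathcal{M}_2)\simeq 0$, ni les entiers $\irr_i$, ni la régularité recherchée. On peut ainsi supposer que $\mathcal{M}_1$ et $\mathcal{M}_2$ n'ont pas de section à support $0$, de sorte que $\mathcal{C}_i:=R\Gamma_{[0]}\mathcal{M}_i\simeq\mathcal{N}^{(i)}[-1]$, où $\mathcal{N}^{(i)}:=\mathcal{H}^{1}R\Gamma_{[0]}\mathcal{M}_i$ est une somme de $m_i:=\dim\mathbf{S}^{1}(\mathcal{N}^{(i)})_0$ copies du Dirac $\delta$. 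Le lemme \ref{majorationn} fournit la majoration essentielle $m_i\leq r_i$, où $r_i:=\rg\hat{\mathcal{M}}_i(\ast 0)^{\reg}$.

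Je forme ensuite une identité numérique à partir du diagramme \eqref{tresgrosdiagramme}, dont le terme central $\Irr^{\ast}_{\Delta_X}(\mathcal{M}_1\boxtimes\mathcal{M}_2)$ est nul par hypothèse. Sa ligne médiane identifie alors $L:=\Irr^{\ast}_{\Delta_X}(\mathcal{M}_1(\ast 0)\boxtimes\mathcal{M}_2)$ au décalé $\Irr^{\ast}_{\Delta_X}(\mathcal{C}_1\boxtimes\mathcal{M}_2)[-1]$, lequel est concentré en degré $2d=2$ de dimension $m_1\irr_2$ d'après le lemme \ref{calcul-1} appliqué aux $m_1$ copies de $\delta$ formant $\mathcal{N}^{(1)}$. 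La colonne de droite de \eqref{tresgrosdiagramme} est le triangle distingué reliant $L$, le complexe $\Irr^{\ast}_{\Delta_X}(\mathcal{M}_1(\ast 0)\boxtimes\mathcal{M}_2(\ast 0))$ — concentré en degré $2$ de dimension $\irr_1\irr_2+e$ par le lemme \ref{calcul2}, où $e:=\irr_0(\mathcal{M}_1(\ast 0)\otimes_{\mathcal{O}}\mathcal{M}_2(\ast 0))\geq 0$ — et le complexe $\Irr^{\ast}_{\Delta_X}(\mathcal{M}_1(\ast 0)\boxtimes\mathcal{C}_2)\simeq\Irr^{\ast}_{\Delta_X}(\mathcal{M}_1(\ast 0)\boxtimes\mathcal{N}^{(2)})[1]$, concentré en degré $1$ de dimension $m_2\irr_1$ (lemme \ref{calcul-1}, version symétrique). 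La proposition \ref{lemme0} garantissant que tout vit en degrés $1$ et $2$, l'additivité de la caractéristique d'Euler--Poincaré dans ce triangle donne
\[
\irr_1\irr_2+e=m_1\irr_2+m_2\irr_1 .
\]

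La principale difficulté — et le cœur de l'argument — est alors de minorer l'irrégularité $e$ du produit tensoriel. En décomposant chaque $\mathcal{M}_i(\ast 0)$ selon Turrittin--Levelt--Hukuhara en somme de facteurs $\hat{\mathcal{E}}^{\varphi}\otimes(\text{régulier})$, on observe que le facteur régulier de l'un, tensorisé avec la partie irrégulière de l'autre, préserve les pentes et contribue à $e$ sans aucune compensation; il en résulte la minoration
\[
e\geq r_1\irr_2+r_2\irr_1 .
\]
Jointe à $m_i\leq r_i$, l'identité numérique fournit alors $\irr_1\irr_2+e=m_1\irr_2+m_2\irr_1\leq r_1\irr_2+r_2\irr_1\leq e$, d'où $\irr_1\irr_2\leq 0$, ce qui contredit $\irr_1,\irr_2\geq 1$ et conclut. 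J'anticipe que le point le plus délicat à rédiger sera précisément la minoration de $e$, à cause de la gestion des ramifications et des coïncidences éventuelles entre facteurs exponentiels, le reste n'étant qu'une manipulation des triangles distingués déjà établis à la section \ref{amplitude}.
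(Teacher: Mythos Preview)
Your argument is correct and lands on exactly the same numerical core as the paper: after reducing via \ref{petite réduction} to modules without sections supported at~$0$, you establish the identity $m_1\irr_2+m_2\irr_1=\irr_1\irr_2+e$ (your $m_i$ is the paper's $n_i=\dim\mathbf{S}^{1}(\mathcal{N}_i)_0$) and then conclude by combining the bound $m_i\leq r_i$ of \ref{majorationn} with the lower bound $e\geq r_1\irr_2+r_2\irr_1$. The only genuine difference is in how the identity is obtained. The paper works directly with the triangles \eqref{irrdiagetponct} and \eqref{triangle2}: from the vanishing of $\mathcal{H}^{2}\Irr^{\ast}_{(0,0)}(\mathcal{M}_1\boxtimes\mathcal{M}_2)$ it first extracts that one of the $\mathbf{S}^{1}(\mathcal{M}_i)_0$ vanishes, and then computes $\dim\mathcal{H}^{1}$ by hand using the long exact sequence \eqref{suitexactelongue1}. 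You instead specialise the diagram \eqref{tresgrosdiagramme} and Lemmas \ref{calcul-1}, \ref{calcul2} to $d=1$ and read off the identity from an Euler characteristic in the right-hand column; this is a little more streamlined (the intermediate step $\mu_1=0$ is bypassed) at the price of invoking the heavier machinery of \S\ref{amplitude}.

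One remark on your anticipated difficulty: the full Turrittin--Levelt--Hukuhara decomposition is not needed for the lower bound on $e$. The coarse splitting \eqref{deomp} into regular and purely irregular parts already yields, by additivity and positivity of the irregularity number,
\[
e\;\geq\;\irr_0\bigl(\hat{\mathcal{M}}_1(\ast 0)^{\reg}\otimes\hat{\mathcal{M}}_2(\ast 0)^{\irr}\bigr)+\irr_0\bigl(\hat{\mathcal{M}}_1(\ast 0)^{\irr}\otimes\hat{\mathcal{M}}_2(\ast 0)^{\reg}\bigr)\;=\;r_1\irr_2+r_2\irr_1,
\]
since tensoring an irregular module by a regular one of rank $r$ multiplies the irregularity by $r$. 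No ramification or coincidence of exponential factors needs to be controlled.
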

\begin{proof}
D'après \ref{petite réduction}, on peut supposer que  $\mathcal{M}_1$  et $\mathcal{M}_2$ n'ont pas de sections à support $0$. 
Si pour $i=1,2$ on pose $\mathcal{N}_i= \mathcal{H}^{1}R\Gamma_{[D]}(\mathcal{M}_i)$, on a une suite exacte
\begin{equation}\label{usiste exactcohlocal}
0\longrightarrow \mathcal{M}_i\longrightarrow  \mathcal{M}_i(\ast D) \longrightarrow   \mathcal{N}_i \longrightarrow 0
\end{equation}
Le complexe $\mathbf{S}(\mathcal{N}_i)_{0}$ est concentré en degré $1$, et par perversité des solutions d'un module holonome, le complexe $\mathbf{S}(\mathcal{M}_i)_{0}$ est concentré en degrés $0$ et $1$. D'autre part, $\mathbf{S}(\mathcal{M}_i(\ast D))_{0}$ est concentré en degré $1$, et par définition $\dim \mathbf{S}^{1}(\mathcal{M}_i(\ast D))_{0}=\irr_i$. On posera $m_i=\dim \mathbf{S}^{0}(\mathcal{M}_{i})_{0}$, $\mu_i=\dim \mathbf{S}^{1}(\mathcal{M}_{i})_{0}$, $ n_i=\dim \mathbf{S}^{1}(\mathcal{N}_{i})_{0}$. \\ \indent
La suite exacte longue associée au triangle distingué
$$
\mathbf{S}(\mathcal{N}_i)_{0} \longrightarrow  \mathbf{S}(\mathcal{M}_i(\ast D) )_{0}\longrightarrow    \mathbf{S}(\mathcal{M}_i)_{0}\overset{+1}{\longrightarrow} 
$$  
s'écrit donc
\begin{equation}\label{suitexactelongue1}
0\longrightarrow \mathbf{S}^{0}(\mathcal{M}_i)_{0}
\longrightarrow \mathbf{S}^{1}(\mathcal{N}_i)_{0}
\longrightarrow \mathbf{S}^{1}(\mathcal{M}_i(\ast D) )_{0}
\longrightarrow  \mathbf{S}^{1}(\mathcal{M}_i)_{0}
\longrightarrow 0
\end{equation}
et donne les relations
\begin{equation}\label{unerelation}
m_i-n_i+\irr_i-\mu_i=0
\end{equation}
D'après \eqref{irrdiagetponct}, on a par hypothèse
\begin{equation}\label{ptiiso}
\Irr^{\ast}_{(0,0)}(\mathcal{M}_{1}\boxtimes \mathcal{M}_{2})\simeq \Irr^{\ast}_{0}(\mathcal{M}_{1}\otimes^{\mathds{L}}_{\mathcal{O}_{ \mathds{C}}}\mathcal{M}_{2}) 
\end{equation}
Or l'irrégularité en $0$ de  $\mathcal{M}_{1}\otimes^{\mathds{L}}_{\mathcal{O}_{ \mathds{C}}}\mathcal{M}_{2}$ est aussi l'irrégularité en $0$ du complexe $\mathcal{M}_{1}(\ast 0)\otimes^{\mathds{L}}_{\mathcal{O}_{ \mathds{C}}}\mathcal{M}_{2}(\ast 0) $. Puisque chacun des $\mathcal{M}_{i}(\ast 0)$ est plat sur $\mathcal{O}_{ \mathds{C}}$, on a 
\begin{equation}\label{concdegre0}
\mathcal{M}_{1}(\ast 0)\otimes^{\mathds{L}}_{\mathcal{O}_{ \mathds{C}}}\mathcal{M}_{2}(\ast 0) \simeq \mathcal{M}_{1}(\ast 0)\otimes_{\mathcal{O}_{ \mathds{C}}}\mathcal{M}_{2}(\ast 0) 
\end{equation}  
et alors le membre de gauche de \eqref{concdegre0}
est concentré en degré $0$. On en déduit que $\Irr^{\ast}_{0}(\mathcal{M}_{1}\otimes^{\mathds{L}}_{\mathcal{O}_{ \mathds{C}}}\mathcal{M}_{2}) $ et par suite 
 $\Irr^{\ast}_{(0,0)}(\mathcal{M}_{1}\boxtimes \mathcal{M}_{2})$ est concentré en degré $1$. \\ \indent
 En particulier le triangle \eqref{triangle2} donne 
\begin{equation}\label{Hdeuxnul}
\mathcal{H}^{2}\Irr^{\ast}_{(0,0)}(\mathcal{M}_{1}\boxtimes \mathcal{M}_{2})\simeq 
 \mathbf{S}^{1}(\mathcal{M}_{1})_{0}\otimes \mathbf{S}^{1}(\mathcal{M}_{2})_{0}\simeq 0
\end{equation}
Dans la suite, on peut donc supposer que $\mathbf{S}^{1}(\mathcal{M}_{1})_{0}\simeq 0$, à savoir $\mu_1=0$.\\ \indent
Le triangle \eqref{triangle2} donne aussi 
une suite exacte 
$$
\begin{tikzcd}
  0 \rar & \mathbf{S}^{0}(\mathcal{M}_1)_{0}\otimes  \mathbf{S}^{0}(\mathcal{M}_2)_{0} \rar
             \ar[draw=none]{d}[name=X, anchor=center]{}
    & \mathbf{S}^{0}(\mathcal{N}_1)_{0}\otimes  \mathbf{S}^{0}(\mathcal{N}_2)_{0}   \ar[rounded corners,
            to path={ -- ([xshift=2ex]\tikztostart.east)
                      |- (X.center) \tikztonodes
                      -| ([xshift=-2ex]\tikztotarget.west)
                      -- (\tikztotarget)}]{dll}[at end]{} \\
  \mathcal{H}^{1}\Irr^{\ast}_{(0,0)}(\mathcal{M}_{1}\boxtimes \mathcal{M}_{2})  \rar &\mathbf{S}^{0}(\mathcal{M}_1)_{0}\otimes  \mathbf{S}^{1}(\mathcal{M}_2)_{0}  \rar & 0
\end{tikzcd}
$$
d'où la relation 
\begin{align*}\label{calculdim}
\dim \mathcal{H}^{1}\Irr^{\ast}_{(0,0)}(\mathcal{M}_{1}\boxtimes \mathcal{M}_{2})&=m_1\mu_2+n_1 n_2-m_1 m_2 \\ 
 &=m_1(\mu_2-m_2) +n_1n_2      \\
&=n_2 \irr_1 +m_1 \irr_2 \\
&=n_2 \irr_1 +n_1 \irr_2-\irr_1 \irr_2
\end{align*}
où les deux dernières égalités proviennent de \eqref{unerelation}. 
A l'aide de \eqref{ptiiso}, on obtient
$$
n_2 \irr_1 +n_1 \irr_2=\irr_1 \irr_2+ \irr_{0}(\mathcal{M}_{1}(\ast 0)\otimes \mathcal{M}_{2}(\ast 0))
$$
D'après \eqref{majorationn}, on a d'une part
$$
n_2 \irr_1 +n_1 \irr_2\leq \rg \hat{\mathcal{M}_2}(\ast 0)^{\reg}\irr_1 +\rg \hat{\mathcal{M}_1}(\ast 0)^{\reg}\irr_2
$$
Comme l'irrégularité est un invariant  de nature formelle, on a d'autre part
\begin{align*}
&\irr_{0}(\mathcal{M}_{1}(\ast 0)\otimes \mathcal{M}_{2}(\ast 0))
\\ &=\irr_{0}(\hat{\mathcal{M}_{1}}(\ast 0)\otimes  \hat{\mathcal{M}_{2}}(\ast 0))\\
                     &=\irr_{0}((\hat{\mathcal{M}_{1}}(\ast 0)^{\reg}\oplus \hat{\mathcal{M}_{1}}(\ast 0)^{\irr})\otimes(\hat{\mathcal{M}_{2}}(\ast 0)^{\reg}\oplus \hat{\mathcal{M}_{2}}(\ast 0)^{\irr}))\\
                     &\geq \irr_{0}(\hat{\mathcal{M}_{1}}(\ast 0)^{\reg}\otimes  \hat{\mathcal{M}_{2}}(\ast 0)^{\irr})+\irr_{0}(\hat{\mathcal{M}_{1}}(\ast 0)^{\irr}\otimes \hat{\mathcal{M}_{2}}(\ast 0)^{\reg}) \\
                     &=\rg \hat{\mathcal{M}_2}(\ast 0)^{\reg}\irr_1 +\rg \hat{\mathcal{M}_1}(\ast 0)^{\reg}\irr_2
\end{align*}
On en déduit que $\irr_1$ ou $\irr_2$ doit être nul, ce qui conclut la preuve de \ref{uneprop}.
\end{proof}

\subsection{Régularité en dehors d'un ensemble discret}\label{regensdisct}
On sait qu'en dehors d'un ensemble discret de points de $X$ passe une hypersurface lisse non caractéristique pour $\mathcal{M}$ et $\mathcal{M}\otimes^{\mathds{L}}_{\mathcal{O}_{X}}\mathcal{M}^{\vee}$. Soit $Z$ une telle hypersurface. D'après \ref{restricara} et la commutation de la dualité avec l'image inverse non caractéristique \cite[2.7.1]{HTT}, on a 
$$\Irr^{\ast}_{\Delta_{Z}}((i_Z^{+}\mathcal{M})^{\vee}\boxtimes i_Z^{+}\mathcal{M})\simeq 0$$
Par hypothèse de récurrence, il vient que  $i_Z^{+}\mathcal{M}$ est régulier. 
D'après \ref{restrcar}, on en déduit pour tout point $x\in Z$
$$
\Irr^{\ast}_{x}(\mathcal{M})\simeq \Irr^{\ast}_{x}(i_Z^{+}\mathcal{M})\simeq 0
$$
Puisque la régularité se teste ponctuellement \cite[6.2-6]{Mehbsmf}, on en déduit que $\mathcal{M}$ est régulier sur le complémentaire d'un ensemble discret de points de $X$.
\\ \indent
Dans la suite, on peut ainsi supposer que $\mathcal{M}$ (et donc $\mathcal{M}^{\vee}$) est régulier en dehors d'un point $x_0$ de $X$. 
\subsection{Réduction au cas à support une courbe}\label{reduction}
Le but de ce paragraphe est de montrer la
\begin{proposition}\label{reduprop}
Il existe une courbe $C$ passant par $x_0$ avec $R\mathcal{M}(\ast C)$ régulier\footnote{En particulier, $R\mathcal{M}^{\vee}(\ast C)$ est régulier d'après \ref{pticoro}.} et 
$$
\Irr^{\ast}_{\Delta_{X}}(R\Gamma_{[C]}\mathcal{M}^{\vee}\boxtimes R\Gamma_{[C]}\mathcal{M}) \simeq 0
$$ 
\end{proposition}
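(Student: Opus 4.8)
Le plan consiste à produire d'abord la courbe $C$ et la régularité des localisés, puis à établir séparément la nullité de l'irrégularité diagonale. Pour la première partie, on raisonnerait par récurrence descendante sur $s:=\dim\Supp\mathcal{M}$, en montrant l'existence d'une courbe $C\ni x_0$ telle que $R\mathcal{M}(\ast C)$ et $R\mathcal{M}^{\vee}(\ast C)$ soient réguliers. Si $s\leq 1$, le support de $\mathcal{M}\oplus\mathcal{M}^{\vee}$ est contenu dans une courbe $C$ passant par $x_0$, et $R\mathcal{M}(\ast C)\simeq R\mathcal{M}^{\vee}(\ast C)\simeq 0$ d'après \ref{coholocalsupport}. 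Si $s>1$, on choisit grâce à \ref{lemmemeb} une hypersurface $Z$ passant par $x_0$ et satisfaisant aux conditions $(1)$, $(2)$, $(3)$ pour le complexe $\mathcal{M}\oplus\mathcal{M}^{\vee}$.

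Le point central est alors l'emploi du critère fondamental. Comme $\mathcal{M}$ est régulier hors de $x_0$ et que $x_0\in Z$, le support de $\Irr^{\ast}_{Z}(\mathcal{M})$ est contenu dans $\{x_0\}$, donc de dimension $0<s-1$; le critère \ref{soustheomeb} donne $\Irr^{\ast}_{Z}(\mathcal{M})\simeq 0$ et \ref{corrcriterfond} la régularité de $R\mathcal{M}(\ast Z)$, de même pour $\mathcal{M}^{\vee}$. En appliquant l'hypothèse de récurrence à $\mathcal{M}':=R\Gamma_{[Z]}(\mathcal{M}\oplus\mathcal{M}^{\vee})$, de support de dimension $<s$ et régulier hors de $x_0$, on obtient une courbe $C\subseteq Z$; comme $C\subseteq Z$ on a $R\Gamma_{[C]}\mathcal{M}\simeq R\Gamma_{[C]}\mathcal{M}'$, et la régularité de $R\mathcal{M}(\ast C)$ se déduit de celles de $R\mathcal{M}'(\ast C)$ et de $R\mathcal{M}(\ast Z)$ en localisant le long de $C$ le triangle $R\Gamma_{[Z]}\mathcal{M}\to\mathcal{M}\to R\mathcal{M}(\ast Z)$.

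Il reste à montrer que $\Irr^{\ast}_{\Delta_{X}}(R\Gamma_{[C]}\mathcal{M}^{\vee}\boxtimes R\Gamma_{[C]}\mathcal{M})\simeq 0$ pour la courbe ainsi construite. D'après \ref{compproduittens} on a $R\Gamma_{[C]}\mathcal{M}^{\vee}\boxtimes R\Gamma_{[C]}\mathcal{M}\simeq R\Gamma_{[C\times C]}(\mathcal{M}^{\vee}\boxtimes\mathcal{M})$, et le triangle de cohomologie locale le long de $C\times C$, joint à l'hypothèse $\Irr^{\ast}_{\Delta_{X}}(\mathcal{M}^{\vee}\boxtimes\mathcal{M})\simeq 0$ du théorème \ref{theo2prime}, ramène la question à la nullité de $\Irr^{\ast}_{\Delta_{X}}(R(\mathcal{M}^{\vee}\boxtimes\mathcal{M})(\ast(C\times C)))$. Écrivant $C\times C=(C\times X)\cap(X\times C)$, on utiliserait le triangle de Mayer-Vietoris pour la localisation: son terme $R\mathcal{M}^{\vee}(\ast C)\boxtimes R\mathcal{M}(\ast C)$ est régulier (produit externe de complexes réguliers), donc d'irrégularité diagonale nulle, et l'on se ramène à annuler séparément les deux termes croisés $\Irr^{\ast}_{\Delta_{X}}(\mathcal{M}^{\vee}\boxtimes R\mathcal{M}(\ast C))$ et $\Irr^{\ast}_{\Delta_{X}}(R\mathcal{M}^{\vee}(\ast C)\boxtimes\mathcal{M})$. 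Le triangle \eqref{irrdiagetponct}, combiné à \eqref{triangle2} et à l'annulation $R\Gamma_{[x_0]}R\mathcal{M}(\ast C)\simeq 0$, présente le premier comme la fibre d'un morphisme $\mathbf{S}(\mathcal{M}^{\vee})_{x_0}\otimes\mathbf{S}(R\mathcal{M}(\ast C))_{x_0}\to\Irr^{\ast}_{x_0}(R(\mathcal{M}^{\vee}\otimes^{\mathds{L}}_{\mathcal{O}_X}\mathcal{M})(\ast C))$, dont il faut montrer que c'est un isomorphisme.

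La difficulté principale sera ce contrôle des termes croisés. Contrairement à la régularité, la nullité de l'irrégularité diagonale ne se transporte pas telle quelle par $R\Gamma_{[Z]}$: découper par une hypersurface fait apparaître des contributions ponctuelles non nulles, et ce n'est qu'au niveau de la courbe finale — où le critère fondamental n'est plus disponible, le support étant de dimension $1$ — que ces termes doivent s'annuler. Le ressort de l'argument consisterait à traduire l'hypothèse $\Irr^{\ast}_{\Delta_{X}}(\mathcal{M}^{\vee}\boxtimes\mathcal{M})\simeq 0$, équivalente d'après \ref{CK} à ce que le morphisme de Cauchy-Kowalevski le long de $\Delta_X$ soit un isomorphisme, en la compatibilité de cet isomorphisme avec la localisation le long de $C$, en s'appuyant sur la liberté laissée par \ref{lemmemeb} dans le choix de $Z$ à chaque étape pour rendre $C$ suffisamment générique vis-à-vis de $\car\mathcal{M}$.
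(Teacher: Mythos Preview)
La construction de la courbe $C$ avec $R\mathcal{M}(\ast C)$ régulier est correcte et coïncide pour l'essentiel avec celle de l'article : on descend par des hypersurfaces $Z_1,\ldots,Z_k$ choisies via \ref{lemmemeb}, et le critère fondamental \ref{corrcriterfond} donne la régularité des localisés successifs.

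En revanche, ta seconde partie comporte une véritable lacune. Tu reconnais toi-même que le contrôle des termes croisés $\Irr^{\ast}_{\Delta_{X}}(R\mathcal{M}^{\vee}(\ast C)\boxtimes\mathcal{M})$ est la \og difficulté principale\fg, et ta proposition finale --- invoquer la généricité de $C$ vis-à-vis de $\car\mathcal{M}$ ou une compatibilité du morphisme de Cauchy--Kowalevski avec la localisation --- n'est pas un argument mais une piste, et elle ne mène nulle part : ce qui manque n'est pas une position générique de $C$ mais un mécanisme d'annulation.

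Le point que tu rates est le suivant. L'article ne sépare pas la construction de $C$ et l'annulation de l'irrégularité diagonale : il les mène \emph{simultanément}, en appliquant à chaque étape $i$ le lemme \ref{lemme utile} aux triangles
\[
R\Gamma_{[Z_{1i}]}\mathcal{M}^{\vee}\longrightarrow R\Gamma_{[Z_{1,i-1}]}\mathcal{M}^{\vee}\longrightarrow R\Gamma_{[Z_{1,i-1}]}\mathcal{M}^{\vee}(\ast Z_i)
\]
et leurs analogues pour $\mathcal{M}$. La condition (2) de ce lemme --- régularité de $\mathcal{M}_{3}\otimes^{\mathds{L}}_{\mathcal{O}_{X}}\mathcal{N}_{1}$ --- y est satisfaite pour une raison triviale : $\mathcal{M}_{3}$ est localisé le long de l'\emph{hypersurface} $Z_i$ tandis que $\mathcal{N}_{1}$ est à support dans $Z_i$, donc leur produit tensoriel est à la fois localisé le long de $Z_i$ et à support dans $Z_i$, c'est-à-dire nul d'après \ref{coholocalsupport}. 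C'est cette annulation qui tue les termes croisés.

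Si l'on tente, comme tu le fais, de passer directement de $\mathcal{M}$ à $R\Gamma_{[C]}\mathcal{M}$ en une seule étape, la courbe $C$ n'est plus une hypersurface : la localisation $R\mathcal{M}^{\vee}(\ast C)$ n'est plus le simple produit tensoriel par un $\mathcal{O}_X$-module plat, et il n'y a aucune raison pour que $R\mathcal{M}^{\vee}(\ast C)\otimes^{\mathds{L}}_{\mathcal{O}_{X}}R\Gamma_{[C]}\mathcal{M}$ s'annule ou soit régulier. Ton approche par Mayer--Vietoris ne fait que déplacer le problème vers ces mêmes termes croisés, sans fournir d'outil pour les traiter. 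La descente pas à pas par des hypersurfaces n'est donc pas une commodité d'exposition : c'est le ressort de la preuve.
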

\begin{proof}
Supposons que sur un voisinage de $x_0$, le support de 
$\mathcal{M}$ est contenu dans une courbe $C$. Alors il en est de même pour $\Supp \mathcal{M}^{\vee}$, et donc d'après \ref{coholocalsupport} on a $\mathcal{M}(\ast C)\simeq \mathcal{M}^{\vee}(\ast C)\simeq 0$ et en particulier $R\Gamma_{[C]}\mathcal{M}\simeq \mathcal{M}$ et $R\Gamma_{[C]}\mathcal{M}^{\vee}\simeq \mathcal{M}^{\vee} $ . Les conclusions de \ref{reduprop} sont donc automatiquement vérifiées.\\ \indent
Supposons que par $x_0$ passe une composante de $\Supp \mathcal{M}$ de dimension au moins 2. Posons $Z_0=X$. On se donne une hypersurface $Z_1$ de  $X$ comme en \ref{lemmemeb} pour $\mathcal{M}^{\vee}\oplus \mathcal{M}$ et passant par $x_0$. D'après \ref{regensdisct}, l'irrégularité $\Irr^{\ast}_{Z_1}(\mathcal{M}^{\vee}\oplus \mathcal{M})$ est nulle en dehors de $x_0$. Le critère fondamental de la régularité \ref{corrcriterfond} stipule que $(\mathcal{M}^{\vee}\oplus \mathcal{M})(\ast Z_1)$ est régulier.  \\ \indent
Supposons donc construites des hypersurfaces $Z_1, \dots, Z_k$ de $X$ passant par $x_0$ et telles que pour tout $1\leq i\leq k$, dans les triangles de cohomologie locale\footnote{On note ici et dans la suite $Z_{1i}$ pour $Z_1\cap\dots\cap Z_{i}$.}
\begin{equation}\label{T11}
R\Gamma_{[Z_{1i}]} (\mathcal{M}^{\vee}\oplus \mathcal{M})\longrightarrow R\Gamma_{[Z_{1i-1}]} (\mathcal{M}^{\vee}\oplus \mathcal{M}) \longrightarrow R\Gamma_{[Z_{1i-1}]} (\mathcal{M}^{\vee}\oplus \mathcal{M})( \ast  Z_{i})
\end{equation}
le terme de gauche a un support de dimension strictement plus petite que la dimension du support du terme central, et le terme de droite est régulier. Une telle hypersurface $Z_i$ pour $i\geq 1$ peut toujours être construite suivant le raisonnement précédent tant que $\Supp R\Gamma_{[Z_{1i-1}]}(\mathcal{M}^{\vee}\oplus \mathcal{M})$ a une composante de dimension au moins $2$ passant par $x_0$. On peut donc supposer que 
l'entier $k$ est tel que la dimension des composantes de
$\Supp R\Gamma_{[Z_{1k}]}(\mathcal{M}^{\vee}\oplus \mathcal{M})$ passant par $x_0$ est $\leq 1$. \\ \indent
Par construction des $Z_i$, les triangles
\begin{equation}\label{T1}
R\Gamma_{[Z_{1i}]} \mathcal{M}\longrightarrow R\Gamma_{[Z_{1i-1}]} \mathcal{M} \longrightarrow R\Gamma_{[Z_{1i-1}]} \mathcal{M}( \ast  Z_{i})
\end{equation}
et 
\begin{equation}\label{T2}
R\Gamma_{[Z_{1i}]} \mathcal{M}^{\vee}\longrightarrow R\Gamma_{[Z_{1i-1}]} \mathcal{M}^{\vee} \longrightarrow R\Gamma_{[Z_{1i-1}]} \mathcal{M}^{\vee}( \ast  Z_{i})
\end{equation}
satisfont aux hypothèses du lemme \ref{lemme utile}. On en déduit de proche en proche
$$
\Irr^{\ast}_{\Delta_{X}}(R\Gamma_{[Z_{1k}]}\mathcal{M}^{\vee}\boxtimes R\Gamma_{[Z_{1k}]}\mathcal{M}) \simeq 0
$$
Considérons une courbe  $C\subset Z_{1k}$ contenant le support de  $R\Gamma_{[Z_{1k}]}(\mathcal{M}^{\vee}\oplus \mathcal{M})$. D'après \ref{coholocalsupport}, on a 
$$
R\Gamma_{[Z_{1k}]}\mathcal{M} \simeq R\Gamma_{[C]}\mathcal{M} \text{ \quad et \quad }
R\Gamma_{[Z_{1k}]}\mathcal{M}^{\vee} \simeq R\Gamma_{[C]}\mathcal{M}^{\vee}
$$
Il vient ainsi
$$
\Irr^{\ast}_{\Delta_{X}}(R\Gamma_{[C]}\mathcal{M}^{\vee}\boxtimes R\Gamma_{[C]}\mathcal{M}) \simeq 0
$$
Pour montrer \ref{reduprop}, il reste à montrer que $R\mathcal{M}( \ast  C)$ est régulier. 
Pour $i=1,\dots, k$ l'inclusion $C\subset Z_i$ donne que le triangle de cohomologie locale pour le complexe $R\Gamma_{[Z_{1i-1}]} R \mathcal{M}(\ast  C)$ et l'hypersurface $Z_i$ s'écrit
\begin{equation}\label{encoreuntriangle}
\xymatrix{
R\Gamma_{[Z_{1i}]} R \mathcal{M}(\ast  C)\ar[r]&  R\Gamma_{[Z_{1i-1}]} R \mathcal{M}(\ast  C)\ar[r]&  R\Gamma_{[Z_{1i-1}]}  \mathcal{M}( \ast  Z_{i})\ar[r]^-{+1}& 
}
\end{equation}
En particulier, le troisième terme de \eqref{encoreuntriangle} est régulier. On voit de proche en proche que pour montrer que $R\mathcal{M}( \ast  C)$ est régulier, il suffit de montrer que $R\Gamma_{[Z_{1k}]} R\mathcal{M}(\ast  C)\simeq R(R\Gamma_{[Z_{1k}]}  \mathcal{M})(\ast  C)\simeq 0$ est régulier, ce qui est bien le cas.
\end{proof}
Comme conséquence de \ref{reduprop}, on voit en utilisant les triangles \eqref{T1} et \eqref{T2} que le théorème \ref{theo2prime} découle du 
\begin{soustheorem}\label{theo3}
Soit $\mathcal{M}\in D^{b}_{\hol}(\mathcal{D}_X)$ supposé régulier en dehors de $x_0$. Soit $C$ une courbe passant par $x_0$ pour laquelle $R\mathcal{M}(\ast C)$ est régulier. On suppose de plus que 
$$
\Irr^{\ast}_{\Delta_{X}}(R\Gamma_{[C]}\mathcal{M}^{\vee}\boxtimes R\Gamma_{[C]}\mathcal{M}) \simeq 0
$$ 
Alors, le complexe $R\Gamma_{[C]}\mathcal{M}^{\vee}$ ou le complexe $R\Gamma_{[C]}\mathcal{M}$ est régulier.
\end{soustheorem}

\subsection{Elimination des composantes non pertinentes de $C$}\label{elimination}
Dans la suite, on posera $\mathcal{M}_1:=R\Gamma_{[C]}\mathcal{M}^{\vee}$, $\mathcal{M}_2:=R\Gamma_{[C]}\mathcal{M}$ et on 
utilisera librement les notations de \ref{notations} pour $\mathcal{M}_1$ et $\mathcal{M}_2$. On a par hypothèse
\begin{equation}\label{encfselkfdjl}
\Irr^{\ast}_{\Delta_{X}}(\mathcal{M}_1\boxtimes \mathcal{M}_2) \simeq 0
\end{equation} 
On dispose pour tout $i$ et pour $k=1,2$ d'un morphisme d'adjonction
\begin{equation}\label{adjof}
p_+p^{\dag}\mathcal{H}^{i}\mathcal{M}_k \longrightarrow \mathcal{H}^{i}\mathcal{M}_k 
\end{equation}
qui est un isomorphisme en dehors de $x_{0}$. En particulier le noyau et le conoyau de \eqref{adjof} sont à support $x_0$. Notons $\mathcal{M}_{k,i,l}$ la restriction de  $p^{\dag}\mathcal{H}^{i}\mathcal{M}_k$ à la composante $\tilde{C}_{l}$ de $\tilde{C}$ et faisons l'hypothèse que les  $\mathcal{M}_{2,i,n}$ sont réguliers pour tout $i$. On va montrer qu'alors $\mathcal{M}$ vérifie les conditions de \ref{theo3} avec $C$ remplacée par $C^{\prime}:=C_1\cup \dots \cup C_{n-1}$. Cela revient à montrer la 
\begin{proposition}\label{redsup}
Le complexe $R\mathcal{M}( \ast  C^{\prime})$ est régulier, et on a 
\begin{equation}\label{iso1}
\Irr^{\ast}_{\Delta_{X}}(R\Gamma_{[C^{\prime}]}\mathcal{M}^{\vee}\boxtimes R\Gamma_{[C^{\prime}]}\mathcal{M}) \simeq 0
\end{equation}
\end{proposition}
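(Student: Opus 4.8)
Le plan est de vérifier les deux conclusions de \ref{redsup} — la régularité de $R\mathcal{M}(\ast C')$ et l'annulation \eqref{iso1} — en introduisant pour $k=1,2$ les complexes $\mathcal{R}_k:=R\mathcal{M}_k(\ast C')$, à support dans $C_n$, ainsi que $\mathcal{M}_1':=R\Gamma_{[C']}\mathcal{M}^{\vee}$ et $\mathcal{M}_2':=R\Gamma_{[C']}\mathcal{M}$. Comme $C'\subset C$, on a $\mathcal{M}_k'\simeq R\Gamma_{[C']}\mathcal{M}_k$, d'où des triangles distingués $\mathcal{M}_k'\longrightarrow \mathcal{M}_k\longrightarrow \mathcal{R}_k\overset{+1}{\longrightarrow}$. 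Puisque $C'$ contient $x_0$ et que $\mathcal{R}_k$ est localisé le long de $C'$, on a $R\Gamma_{[x_0]}\mathcal{R}_k\simeq 0$, donc $R\Gamma_{[D]}\mathcal{R}_k\simeq 0$ au voisinage de $x_0$: les $\mathcal{R}_k$ sont déjà localisés le long de $D$. Enfin, l'hypothèse que les $\mathcal{M}_{2,i,n}$ sont réguliers se traduit, via la commutation de $p^{\dag}$ à la localisation, par la régularité de $p^{\dag}\mathcal{R}_2$ le long de $\tilde{C}_n$; comme $\mathcal{M}$ est régulier hors de $x_0$ et que la régularité d'un complexe à support une courbe se teste sur la normalisation, j'en déduirais que $\mathcal{R}_2$ est régulier.

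Je traiterais d'abord la régularité de $R\mathcal{M}(\ast C')$. En appliquant le foncteur de localisation $R(\cdot)(\ast C')$ au triangle de cohomologie locale de $\mathcal{M}$ le long de $C$ et en utilisant $R(R\mathcal{M}(\ast C))(\ast C')\simeq R\mathcal{M}(\ast C)$, j'obtiendrais un triangle distingué $\mathcal{R}_2\longrightarrow R\mathcal{M}(\ast C')\longrightarrow R\mathcal{M}(\ast C)\overset{+1}{\longrightarrow}$. Le terme de droite est régulier par hypothèse de \ref{theo3} et $\mathcal{R}_2$ l'est d'après ce qui précède, d'où la régularité de $R\mathcal{M}(\ast C')$. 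J'en déduirais ensuite celle de $\mathcal{R}_1$ par dualité: le corollaire \ref{pticoro} donne $R\mathcal{M}^{\vee}(\ast C')\simeq R(R\mathcal{M}(\ast C'))^{\vee}(\ast C')$ et de même pour $C$, de sorte que $R\mathcal{M}^{\vee}(\ast C')$ et $R\mathcal{M}^{\vee}(\ast C)$ sont réguliers, la dualité et la localisation préservant la régularité. Le triangle analogue au précédent pour $\mathcal{M}^{\vee}$, à savoir $\mathcal{R}_1\longrightarrow R\mathcal{M}^{\vee}(\ast C')\longrightarrow R\mathcal{M}^{\vee}(\ast C)\overset{+1}{\longrightarrow}$, montrerait alors que $\mathcal{R}_1$ est régulier; en particulier $\irr_{1,n}=0$.

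Pour l'annulation \eqref{iso1}, j'appliquerais $\Irr^{\ast}_{\Delta_X}$ aux triangles ci-dessus après produit tensoriel externe. Le triangle $\mathcal{M}_1\boxtimes\mathcal{M}_2'\longrightarrow \mathcal{M}_1\boxtimes\mathcal{M}_2\longrightarrow \mathcal{M}_1\boxtimes\mathcal{R}_2$, joint à l'hypothèse \eqref{encfselkfdjl}, ramènerait l'annulation de $\Irr^{\ast}_{\Delta_X}(\mathcal{M}_1\boxtimes\mathcal{M}_2')$ à celle de $\Irr^{\ast}_{\Delta_X}(\mathcal{M}_1\boxtimes\mathcal{R}_2)$; puis le triangle $\mathcal{M}_1'\boxtimes\mathcal{M}_2'\longrightarrow \mathcal{M}_1\boxtimes\mathcal{M}_2'\longrightarrow \mathcal{R}_1\boxtimes\mathcal{M}_2'$ ramènerait \eqref{iso1} à l'annulation de $\Irr^{\ast}_{\Delta_X}(\mathcal{R}_1\boxtimes\mathcal{M}_2')$. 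Pour chacun de ces deux complexes, je décomposerais le facteur non régulier par son triangle de cohomologie locale le long de $D$: la part à support ponctuel ne contribue, d'après \ref{calcul-1}, que par l'irrégularité du facteur régulier $\mathcal{R}_2$ (resp. $\mathcal{R}_1$), donc nulle, tandis que la part localisée relève de \ref{calcul2}. Pour $\Irr^{\ast}_{\Delta_X}(\mathcal{M}_1(\ast D)\boxtimes\mathcal{R}_2)$ le terme croisé $\irr_1\,\irr(\mathcal{R}_2)$ s'annule et le seul terme diagonal est $\irr_{x_n}(\mathcal{M}_{1,n}(\ast x_n)\otimes \mathcal{R}_{2,n}(\ast x_n))=\rg(\mathcal{R}_{2,n})\,\irr_{1,n}=0$; pour $\Irr^{\ast}_{\Delta_X}(\mathcal{R}_1\boxtimes\mathcal{M}_2'(\ast D))$, les complexes $\mathcal{R}_1$ et $\mathcal{M}_2'$ n'ayant aucune composante commune, il n'y a aucun terme diagonal et le terme croisé fait intervenir $\irr(\mathcal{R}_1)=0$. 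Les deux complexes seraient donc nuls, ce qui établirait \eqref{iso1}.

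Le point le plus délicat sera d'obtenir la régularité de $\mathcal{R}_1$ à partir de la seule hypothèse portant sur $\mathcal{M}_2$: c'est la combinaison de la dualité \ref{pticoro} et de la régularité déjà acquise de $R\mathcal{M}(\ast C')$ qui force $\irr_{1,n}=0$, condition sans laquelle le terme diagonal de \ref{calcul2} ne s'annulerait pas. La non-commutation de la cohomologie locale à la dualité impose du reste de raisonner sur les localisés $R\mathcal{M}^{\vee}(\ast C)$ et $R\mathcal{M}^{\vee}(\ast C')$ plutôt que directement sur $\mathcal{R}_1$.
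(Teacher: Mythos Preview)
Your argument is correct and, for the regularity of $R\mathcal{M}(\ast C')$, essentially coincides with the paper's: both localize the triangle $R\Gamma_{[C]}\mathcal{M}\to\mathcal{M}\to R\mathcal{M}(\ast C)$ along $C'$, reduce to the regularity of $\mathcal{R}_2=R\mathcal{M}_2(\ast C')$, and then pass to the normalization via $p_+p^{\dag}$ to read this off from the hypothesis on the $\mathcal{M}_{2,i,n}$. Your phrase ``la régularité d'un complexe à support une courbe se teste sur la normalisation'' is exactly what the paper unpacks via the adjunction \eqref{adjof} and the properness of $p$; you should make this step explicit. The deduction of the regularity of $\mathcal{R}_1$ via \ref{pticoro} is also the paper's.

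For the vanishing \eqref{iso1}, however, you take a genuinely different route. The paper applies the abstract lemma \ref{lemme utile} to the pair of triangles $R\Gamma_{[C']}\mathcal{M}^{\vee}\to\mathcal{M}_1\to R\mathcal{M}_1(\ast C')$ and $R\Gamma_{[C']}\mathcal{M}\to\mathcal{M}_2\to R\mathcal{M}_2(\ast C')$: the key inputs are that the third terms are regular and localized at $x_0$ (since $x_0\in C'$), and that the tensor products $R\Gamma_{[C']}\mathcal{M}\otimes^{\mathds{L}}_{\mathcal{O}_X}R\mathcal{M}_1(\ast C')$ and $R\Gamma_{[C']}\mathcal{M}^{\vee}\otimes^{\mathds{L}}_{\mathcal{O}_X}R\mathcal{M}_2(\ast C')$ vanish identically (one factor supported in $C'$, the other localized along $C'$), hence are trivially regular. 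This is a one-line application once \ref{lemme utile} is in hand. You instead compute $\Irr^{\ast}_{\Delta_X}(\mathcal{M}_1\boxtimes\mathcal{R}_2)$ and $\Irr^{\ast}_{\Delta_X}(\mathcal{R}_1\boxtimes\mathcal{M}_2')$ directly from the explicit formulas \ref{calcul-1} and \ref{calcul2}, which forces you to track the numerical invariants $\irr_{1,n}$ and to justify $\irr_{1,n}=0$ via the regularity of $\mathcal{R}_1$. This works, but since $\mathcal{M}_1$, $\mathcal{M}_2'$, $\mathcal{R}_1$, $\mathcal{R}_2$ are complexes while \ref{calcul-1} and \ref{calcul2} are stated for modules, you must insert a hypercohomology spectral-sequence argument (as in \eqref{ss}) at each step. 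The paper's route via \ref{lemme utile} bypasses both the numerics and this reduction; your route, in exchange, does not rely on \ref{lemme utile} at this point and makes the mechanism of cancellation (no common component, $\irr_{1,n}=0$) more transparent.
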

\begin{proof}
Par application du foncteur de localisation le long de $C^{\prime}$ au triangle
$$
\xymatrix{
R\Gamma_{[C]}\mathcal{M}\ar[r] &   \mathcal{M} \ar[r] &   R\mathcal{M}(\ast C)  \ar[r]^-{+1}&
}
$$
on obtient avec les notations qui précèdent un triangle
$$
\xymatrix{
R\mathcal{M}_{2}(\ast C^{\prime})\ar[r] &   R\mathcal{M}(\ast C^{\prime})   \ar[r] &   R\mathcal{M}(\ast C)  \ar[r]^-{+1}&
}
$$
dont le dernier terme est régulier. Pour montrer que $R\mathcal{M}(\ast C^{\prime})$  est régulier, il suffit donc de montrer que $R\mathcal{M}_{2}(\ast C^{\prime})$ est régulier. On déduit de la suite spectrale
$$
\mathcal{H}^{j}R(\mathcal{H}^{i}\mathcal{M}_{2})(\ast C^{\prime}) \Longrightarrow \mathcal{H}^{i+j}R\mathcal{M}_{2}(\ast C^{\prime})
$$
qu'il suffit pour cela de montrer que les $R(\mathcal{H}^{i}\mathcal{M}_{2})(\ast C^{\prime})$ sont réguliers. \\ \indent
Or en appliquant le foncteur de localisation le long de $C^{\prime}$ à \eqref{adjof}, on obtient un isomorphisme
\begin{equation}\label{unptiiso}
R(p_+p^{\dag}\mathcal{H}^{i}\mathcal{M}_{2})(\ast C^{\prime})\overset{\sim}{\longrightarrow} R(\mathcal{H}^{i}\mathcal{M}_{2})(\ast C^{\prime})
\end{equation}
Il suffit donc de montrer que le terme de gauche de \eqref{unptiiso} est régulier. Par propreté de $p$, on a d'après \cite[3.6-4]{Mehbsmf}  
$$
R(p_+p^{\dag}\mathcal{H}^{i}\mathcal{M}_{2})(\ast C^{\prime})\simeq p_{+}\left[R(p^{\dag}\mathcal{H}^{i}\mathcal{M}_2)(\ast p^{-1}(C^{\prime}))\right]
$$
donc par préservation de la régularité par image directe, il suffit de montrer que $R(p^{\dag}\mathcal{H}^{i}\mathcal{M}_2)(\ast p^{-1}(C^{\prime}))$ est régulier.  Or on a $p^{-1}(C^{\prime})=\tilde{C}_1\cup \dots \cup \tilde{C}_{n-1} \cup \{x_{n}\}$, donc le complexe $R(p^{\dag}\mathcal{H}^{i}\mathcal{M}_2)(\ast p^{-1}(C^{\prime}))$ est à support $\tilde{C}_n$. En particulier, il est régulier si et seulement si sa restriction à $\tilde{C}_n$ l'est. Or cette restriction est justement $\mathcal{M}_{2,i,n}(\ast x_n)$ qui est régulier par hypothèse. La régularité de $R\mathcal{M}(\ast C^{\prime})$  est donc prouvée. D'après \ref{pticoro}, le complexe $R\mathcal{M}^{\vee}(\ast C^{\prime}) $ est régulier. En particulier, $R\mathcal{M}_{1}(\ast C^{\prime})$ et $R\mathcal{M}_{2}(\ast C^{\prime})$ sont réguliers. 
\\ \indent
Dans les triangles 
$$
\xymatrix{
R\Gamma_{[C^{\prime}]}\mathcal{M}^{\vee}\ar[r] &  \mathcal{M}_{1}  \ar[r] &   R\mathcal{M}_{1}(\ast C^{\prime})  \ar[r]^-{+1}&
}
$$
et
$$
\xymatrix{
R\Gamma_{[C^{\prime}]}\mathcal{M}\ar[r] &  \mathcal{M}_{2}  \ar[r] &   R\mathcal{M}_{2}(\ast C^{\prime})  \ar[r]^-{+1}&
}
$$
on sait donc que  $R\mathcal{M}_1(\ast C^{\prime})$ et $R\mathcal{M}_2(\ast C^{\prime})$ sont réguliers. Or
$$
R\Gamma_{[C^{\prime}]}\mathcal{M}\otimes^{\mathds{L}}_{\mathcal{O}_{X}}R\mathcal{M}_{1}(\ast C^{\prime})  \simeq 0$$
et
$$
R\Gamma_{[C^{\prime}]}\mathcal{M}^{\vee}\otimes^{\mathds{L}}_{\mathcal{O}_{X}} R\mathcal{M}_{2}(\ast C^{\prime}) \simeq 0$$
L'annulation \eqref{iso1} est donc une conséquence de \eqref{encfselkfdjl} et \ref{lemme utile}.
\end{proof}
Si $\mathcal{M}_{2,i,l}$ est régulier pour tout entier $i$ et $l$, alors par préservation de la régularité par image directe on obtient que $p_+p^{\dag}\mathcal{H}^{i}\mathcal{M}_2$ est régulier pour tout $i$. Puisque le noyau et le conoyau de \eqref{adjof}  sont à support $x_0$, il vient que $\mathcal{H}^{i}\mathcal{M}_2$ est régulier pour tout $i$ et alors $\mathcal{M}_2$ est régulier et le théorème \ref{theo3} est acquis. \\ \indent
Sinon d'après la proposition \ref{redsup}, on peut supposer que pour \textbf{toute} composante $C_l$ de $C$, il existe un entier $i_l$ avec $\mathcal{M}_{2,i_l,l}$ non régulier.\\ \indent
De la même façon, si $\mathcal{M}_{1,i,l}$ est régulier pour tout entier $i$ et $l$, le théorème \ref{theo3} est acquis. Dans le cas contraire, et quitte à retirer de nouveau certaines composantes à $C$, on peut supposer que pour tout $l$, il existe un entier $j_l$ avec $\mathcal{M}_{1,j_l ,l}$ non régulier. 
\subsection{Réduction du théorème \ref{theo3} au cas de la dimension 1.}\label{reddim1}
On commence par le 
\begin{lemme}\label{endroelsem}
Pour tout couple d'entiers $(i,j)$, on a l'annulation
\begin{equation}\label{ann}
\Irr^{\ast}_{\Delta_{X}}(\mathcal{H}^{i}\mathcal{M}_1
\boxtimes \mathcal{H}^{j}\mathcal{M}_2) \simeq 0
\end{equation}
\end{lemme}
\begin{proof}
Du fait de l'identification
$$
\mathcal{H}^{q}(\mathcal{M}_1
\boxtimes \mathcal{M}_2)\simeq \bigoplus_{i+j=q}\mathcal{H}^{i}\mathcal{M}_1\boxtimes \mathcal{H}^{j}\mathcal{M}_2
$$
La proposition \ref{lemme0} implique que les termes  $E_2^{pq}$ éventuellement non nuls de la suite spectrale 
\begin{equation}\label{ss}
E_{2}^{pq}=\mathcal{H}^{p}\Irr^{\ast}_{\Delta_{X}}(\mathcal{H}^{-q}(\mathcal{M}_1
\boxtimes \mathcal{M}_2))  \Longrightarrow \mathcal{H}^{p+q}\Irr^{\ast}_{\Delta_{X}}(\mathcal{M}_1
\boxtimes \mathcal{M}_2)
\end{equation}
sont situés sur les droites $p=2d-1$ et $p=2d$. Ainsi, \eqref{ss} dégénère en page $2$.
En particulier pour $i$ et $j$ fixés, les espaces $$\mathcal{H}^{2d-1}\Irr^{\ast}_{\Delta_{X}}(\mathcal{H}^{i}\mathcal{M}_1
\boxtimes \mathcal{H}^{j}\mathcal{M}_2)  \text{\quad et \quad} 
 \mathcal{H}^{2d}\Irr^{\ast}_{\Delta_{X}}(\mathcal{H}^{i}\mathcal{M}_1
\boxtimes \mathcal{H}^{j}\mathcal{M}_2) $$ sont des sous-quotients des $\mathcal{H}^{p+q}\Irr^{\ast}_{\Delta_{X}}(\mathcal{M}_1
\boxtimes \mathcal{M}_2)$. On en déduit que \eqref{encfselkfdjl} entraine les annulations \eqref{ann}.
\end{proof}
En particulier pour tout entier $l$, on a 
$$
\Irr^{\ast}_{\Delta_{X}}(\mathcal{H}^{i_{l}}\mathcal{M}_1
\boxtimes \mathcal{H}^{j_{l}}\mathcal{M}_2) \simeq 0
$$
avec $\mathcal{M}_{1,i_l, l}$ et $\mathcal{M}_{2,j_l, l}$ non réguliers. \\ \indent
On se donne une hypersurface $D$ de $X$ passant par $x_0$ et ne contenant aucune composante irréductible de $C$. Par application de $p^{\dag}$ au triangle de cohomologie locale
$$
\xymatrix{
R\Gamma_{[D]}\mathcal{H}^{i_l}\mathcal{M}_1\ar[r] &   \mathcal{H}^{i_l}\mathcal{M}_1\ar[r] &   \mathcal{H}^{i_l}\mathcal{M}_1(\ast D)  \ar[r]^-{+1}&
}
$$
on obtient un triangle dont la restriction à la composante $\tilde{C}_l$ de $\tilde{C}$ s'écrit
\begin{equation}\label{qsmldfk}
\xymatrix{
R\Gamma_{[x_l]}\mathcal{M}_{1,i_l, l}\ar[r] &  \mathcal{M}_{1,i_l, l}\ar[r] &   \mathcal{M}_{1,i_l, l}(\ast x_l)  \ar[r]^-{+1}&
}
\end{equation}
En particulier, on a
\begin{align*}\label{calculker}
\mathcal{H}^{0}R\Gamma_{[x_l]}\mathcal{M}_{1,i_l, l}
&\simeq (\mathcal{H}^{0}p^{\dag}R\Gamma_{[D]}\mathcal{H}^{i_l}\mathcal{M}_1)_{|\tilde{C}_l} \\
& \simeq (\mathcal{H}^{0}p^{\dag}\mathcal{H}^{0}R\Gamma_{[D]}\mathcal{H}^{i_l}\mathcal{M}_1)_{|\tilde{C}_l}\\ 
 &\simeq (p^{\dag}\mathcal{H}^{0}R\Gamma_{[D]}\mathcal{H}^{i_l}\mathcal{M}_1)_{|\tilde{C}_l}
\end{align*}
où la seconde identification provient de ce que les termes éventuellement non nuls de la suite spectrale 
$$
E_2^{pq}=\mathcal{H}^{p}p^{\dag}\mathcal{H}^{-q}R\Gamma_{[D]}\mathcal{H}^{i_l}\mathcal{M}_1 \Longrightarrow \mathcal{H}^{p+q}p^{\dag}R\Gamma_{[D]}\mathcal{H}^{i_l}\mathcal{M}_1
$$
sont d'après \ref{concdegré0} les termes $E_2^{0,0}$ et  $E_2^{0,-1}$, et où la dernière identification provient aussi de \ref{concdegré0}. On en déduit
\begin{align*}
(p^{\dag}
(\mathcal{H}^{i_l}\mathcal{M}_1/\mathcal{H}^{0}R\Gamma_{[D]}\mathcal{H}^{i_l}\mathcal{M}_1
))_{|\tilde{C}_l}&\simeq   
(p^{\dag}\mathcal{H}^{i_l}\mathcal{M}_1/p^{\dag}\mathcal{H}^{0}R\Gamma_{[D]}\mathcal{H}^{i_l}\mathcal{M}_1
)_{|\tilde{C}_l}\\
&\simeq \mathcal{M}_{1,i_l ,
l}/\mathcal{H}^{0}R\Gamma_{[x_l]}\mathcal{M}_{1,i_l, l}
\end{align*}
En particulier, le module $(p^{\dag}
(\mathcal{H}^{i_l}\mathcal{M}_1/\mathcal{H}^{0}R\Gamma_{[D]}\mathcal{H}^{i_l}\mathcal{M}_1
))_{|\tilde{C}_l}$ est nécessairement irrégulier, car dans le cas contraire le triangle \eqref{qsmldfk} montre que 
$\mathcal{M}_{1,i_l, l}(\ast x_l)  $ est régulier, d'où on déduit que $\mathcal{M}_{1,i_l, l}$ est régulier, ce qui est exclu par hypothèse. En faisant de même avec $\mathcal{H}^{j_{l}}\mathcal{M}_2$ et en utilisant \ref{petite réduction}, on a ainsi obtenu en prenant $l=1$ deux modules holonomes de nouveau notés $\mathcal{M}_1$ et $\mathcal{M}_2$ par abus et vérifiant 
\begin{enumerate}
\item pour $k=1,2$ la flèche $\mathcal{M}_k \longrightarrow  \mathcal{M}_k(\ast D)$ est injective.
\item pour $k=1,2$ le module $(p^{\dag}\mathcal{M}_k)_{|\tilde{C}_1}$ est irrégulier. 
\item $\Irr^{\ast}_{\Delta_{X}}(\mathcal{M}_1
\boxtimes \mathcal{M}_2) \simeq 0$.
\end{enumerate}
On va montrer que la condition $(2)$ est contradictoire avec $(1)$ et $(3)$. On a pour $k=1,2$ une suite exacte
\begin{equation}\label{suiteexaccour}
0\longrightarrow \mathcal{M}_k \longrightarrow  \mathcal{M}_k(\ast D)
\longrightarrow   \mathcal{N}_{k}\longrightarrow 0
\end{equation}
avec $\mathcal{N}_{k}$ à support $x_0$. En particulier, il s'agit d'une somme d'un nombre fini de copies du Dirac $\delta$ en $x_0$. On sait donc d'après \ref{concdegré0} que $p^{\dag}\mathcal{N}_{k}$ est concentré en degré $0$. Puisque c'est aussi le cas de $p^{\dag}(\mathcal{M}_k(\ast D))$, il vient que $p^{\dag}\mathcal{M}_k$ est concentré en degré $0$. D'autre part, le morphisme $p$ étant fini, le foncteur $p_{+}$ est exact. Par application de $p_{+}p^{\dag}$ à la suite exacte courte \eqref{suiteexaccour}, on en déduit que 
$p_+p^{\dag}\mathcal{M}_k$ est concentré en degré $0$ et que 
le morphisme supérieur du carré 
\begin{equation}\label{carré}
\xymatrix{
    p_+p^{\dag} \mathcal{M}_k \ar[r] \ar[d] & p_+p^{\dag} (\mathcal{M}_k(\ast D)) \ar[d]^-\wr \\
    \mathcal{M}_k  \ar@{^{(}->}[r]& \mathcal{M}_k(\ast D) 
}
\end{equation}
est une injection, l'isomorphisme de droite provenant de \eqref{identification}. En particulier, le morphisme d'adjonction 
  $p_+p^{\dag}\mathcal{M}_k \longrightarrow \mathcal{M}_k $ est injectif. Comme par théorème de Kashiwara, il s'agit d'un isomorphisme en dehors de $x_0$, son conoyau noté $\mathcal{K}_{k}$ est une somme d'un nombre fini de copies de $\delta$. \\ \indent
Par application de $\Irr^{\ast}_{\Delta_{X}}$ au diagramme 
\begin{equation}\label{gros diagramme2}
\xymatrix{
p_+p^{\dag}\mathcal{M}_1  \boxtimes  
p_+p^{\dag}\mathcal{M}_2  \ar[r] \ar[d]&              \mathcal{M}_1  \boxtimes  
p_+p^{\dag}\mathcal{M}_2 \ar[r] \ar[d]&              \mathcal{K}_{1}  \boxtimes  
p_+p^{\dag}\mathcal{M}_2\ar[d] \\
p_+p^{\dag}\mathcal{M}_1  \boxtimes
\mathcal{M}_2 
\ar[r] \ar[d]&    
\mathcal{M}_1  \boxtimes   \mathcal{M}_2     \ar[r] \ar[d]&            \ar[d] \mathcal{K}_{1}  \boxtimes  \mathcal{M}_2\\
p_+p^{\dag}\mathcal{M}_1  \boxtimes \mathcal{K}_{2} \ar[r] &  
\mathcal{M}_1  \boxtimes  \mathcal{K}_{2}
\ar[r] &            \mathcal{K}_{1}  \boxtimes    \mathcal{K}_{2}
}
\end{equation}
on obtient un diagramme 
$$
\xymatrix@C=0.4cm{
\Irr^{\ast}_{\Delta_{X}}(p_+p^{\dag}\mathcal{M}_1  \boxtimes  
p_+p^{\dag}\mathcal{M}_2 )  &    \Irr^{\ast}_{\Delta_{X}}(\mathcal{M}_1  \boxtimes  
p_+p^{\dag}\mathcal{M}_2 )      \ar[l] &              
\Irr^{\ast}_{\Delta_{X}}(\mathcal{K}_{1}  \boxtimes  
p_+p^{\dag}\mathcal{M}_2) \ar[l] \\
\Irr^{\ast}_{\Delta_{X}}(p_+p^{\dag}\mathcal{M}_1  \boxtimes
\mathcal{M}_2 )\ar[u]& 0  \ar[l] \ar[u]&     \Irr^{\ast}_{\Delta_{X}}(\mathcal{K}_{1}  \boxtimes  \mathcal{M}_2)  \ar[u]_-{\wr}\ar[l]  \\
\Irr^{\ast}_{\Delta_{X}}(p_+p^{\dag}\mathcal{M}_1  \boxtimes \mathcal{K}_{2}) \ar[u]  &  \Irr^{\ast}_{\Delta_{X}}( \mathcal{M}_1  \boxtimes  \mathcal{K}_{2}) 
\ar[l]_-{\sim} \ar[u]&  0 \ar[u]\ar[l] 
}
$$
appelé $\Diag$ dont on note $\Diag_{ab}$ le terme placé en ligne $a$ et colonne $b$. \\ \indent
D'après \ref{lemme0}, les complexes intervenant dans  $\Diag_{ab}$  sont concentrés en degrés $2d$ et $2d-1$. L'exactitude de la seconde ligne montre que $\Diag_{21}$ est concentré en degré $2d-1$. On en déduit par exactitude de la première colonne que $\mathcal{H}^{2d}\Diag_{11}\simeq 0$. \\ \indent
Or par compatibilité \ref{compmorphismepropre} de l'irrégularité avec les morphismes propres, on a 
$$
\Diag_{11}\simeq (p\times p)_{\ast}\Irr^{\ast}_{(p\times p)^{-1}(\Delta_{X})}(p^{\dag}\mathcal{M}_1  \boxtimes  
p^{\dag}\mathcal{M}_2 ) [2-2d]
$$
En particulier le complexe
$$
 \Irr^{\ast}_{\Delta_{\tilde{C}_1}}((p^{\dag}\mathcal{M}_1)_{|\tilde{C}_1} \boxtimes  
(p^{\dag}\mathcal{M}_2)_{|\tilde{C}_1}  )_{(x_1,x_1)}[2-2d]
$$ 
est un facteur direct de $(\Diag_{11})_{(x_0,x_0)}$.
Comme par théorème de perversité de Mebkhout \cite[2.1.6]{Mehbgro}, l'irrégularité diagonale $\Irr^{\ast}_{\Delta_{\tilde{C}_1}}((p^{\dag}\mathcal{M}_1)_{|\tilde{C}_1} \boxtimes  
(p^{\dag}\mathcal{M}_2)_{|\tilde{C}_1} )$ est un dirac en $(x_1,x_1)$ concentré en degré $2$, on déduit de l'annulation de $\mathcal{H}^{2d}\Diag_{11}$ que 
\begin{equation}\label{annul}
\Irr^{\ast}_{\Delta_{\tilde{C}_1}}((p^{\dag}\mathcal{M}_1)_{|\tilde{C}_1}  \boxtimes  
(p^{\dag}\mathcal{M}_2)_{|\tilde{C}_1})\simeq 0
\end{equation}
D'après \ref{uneprop}, ceci est contradictoire avec le fait que les modules $(p^{\dag}\mathcal{M}_1)_{|\tilde{C}_1}$ et $(p^{\dag}\mathcal{M}_2)_{|\tilde{C}_1}$ sont irréguliers.

\section{Deux lemmes techniques}
\subsection{} Dans ce paragraphe, on se donne une variété complexe $X$, un point $x\in X$ et deux triangles de $D^{b}_{\hol}(\mathcal{D}_X)$
$$
\xymatrix{
\mathcal{M}_{1}\ar[r] &  \mathcal{M}_{2}\ar[r]&  \mathcal{M}_{3}\ar[r]^-{+1}&
}
$$
et 
$$
\xymatrix{
\mathcal{N}_{1}\ar[r] &  \mathcal{N}_{2}\ar[r]&  \mathcal{N}_{3}\ar[r]^-{+1}&
}
$$
tels que 
\begin{enumerate}
\item $\mathcal{M}_{3}$ et $\mathcal{N}_{3}$ sont réguliers.
\item Les complexes $\mathcal{M}_{3}\otimes^{\mathds{L}}_{\mathcal{O}_{X}}\mathcal{N}_{1}$ et $\mathcal{M}_{1}\otimes^{\mathds{L}}_{\mathcal{O}_{X}}\mathcal{N}_{3}$ sont réguliers.
\item $\mathcal{M}_{3}$ et $\mathcal{N}_{3}$ sont localisés en $x$, à savoir $\mathcal{M}_{3}\simeq R\mathcal{M}_{3}(\ast x)$ et $\mathcal{N}_{3}\simeq R\mathcal{N}_{3}(\ast x)$.
\end{enumerate}
On va montrer le 
\begin{lemme}\label{lemme utile}
L'annulation $\Irr^{\ast}_{\Delta_{X}}(\mathcal{M}_{2}\boxtimes \mathcal{N}_{2})_{(x,x)}\simeq 0$ équivaut à  l'annulation \newline $\Irr^{\ast}_{\Delta_{X}}(\mathcal{M}_{1}\boxtimes \mathcal{N}_{1})_{(x,x)}\simeq 0$
\end{lemme}
\begin{proof}
En appliquant $\Irr^{\ast}_{\Delta_{X}}$ au diagramme
$$
\xymatrix{
\mathcal{M}_{1}\boxtimes
\mathcal{N}_{1} \ar[r] \ar[d]& \mathcal{M}_{2}\boxtimes
\mathcal{N}_{1} \ar[r] \ar[d]&   \mathcal{M}_{3}\boxtimes
\mathcal{N}_{1}  \ar[d] \\
\mathcal{M}_{1}\boxtimes
\mathcal{N}_{2} \ar[r] \ar[d]&   \mathcal{M}_{2}\boxtimes
\mathcal{N}_{2} \ar[r] \ar[d]&      \mathcal{M}_{3}\boxtimes
\mathcal{N}_{2}  \ar[d] \\
\mathcal{M}_{1}\boxtimes
\mathcal{N}_{3} \ar[r] &    \mathcal{M}_{2}\boxtimes
\mathcal{N}_{3}
\ar[r] &             \mathcal{M}_{3}\boxtimes
\mathcal{N}_{3}
}
$$
on obtient par hypothèse le diagramme à lignes et colonnes distinguées
$$
\xymatrix{
\Irr^{\ast}_{\Delta_{X}}(\mathcal{M}_{1}\boxtimes
\mathcal{N}_{1})_{(x,x)}  & \Irr^{\ast}_{\Delta_{X}}(\mathcal{M}_{2}\boxtimes
\mathcal{N}_{1})_{(x,x)} \ar[l] &   \Irr^{\ast}_{\Delta_{X}}(\mathcal{M}_{3}\boxtimes
\mathcal{N}_{1} )_{(x,x)} \ar[l] \\
\Irr^{\ast}_{\Delta_{X}}(\mathcal{M}_{1}\boxtimes
\mathcal{N}_{2} )_{(x,x)} \ar[u]&   \Irr^{\ast}_{\Delta_{X}}(\mathcal{M}_{2}\boxtimes \mathcal{N}_{2})_{(x,x)}\ar[l] \ar[u]&      \Irr^{\ast}_{\Delta_{X}}(\mathcal{M}_{3}\boxtimes
\mathcal{N}_{2} )_{(x,x_)} \ar[l]\ar[u] \\
\Irr^{\ast}_{\Delta_{X}}(\mathcal{M}_{1}\boxtimes
\mathcal{N}_{3} )_{(x,x)}\ar[u]&   \Irr^{\ast}_{\Delta_{X}}( \mathcal{M}_{2}\boxtimes
\mathcal{N}_{3})_{(x,x)}
\ar[l] \ar[u]&        0\ar[u] \ar[l]
}
$$
Pour démontrer \ref{lemme utile}, il suffit de démontrer $$\Irr^{\ast}_{\Delta_{X}}(\mathcal{M}_{3}\boxtimes
\mathcal{N}_{1} )_{(x,x)}\simeq 0 \text{\quad et \quad}
\Irr^{\ast}_{\Delta_{X}}(\mathcal{M}_{1}\boxtimes
\mathcal{N}_{3} )_{(x,x)}\simeq 0$$
Par symétrie des hypothèses, il suffit de traiter la première annulation. Par hypothèse, on a
$$
\Irr^{\ast}_{x}(\mathcal{M}_{3}\otimes^{\mathds{L}}_{\mathcal{O}_{X}}\mathcal{N}_{1}) \simeq 0
$$
Le triangle \eqref{irrdiagetponct} donne donc une identification 
$$
\Irr^{\ast}_{\Delta_{X}}(\mathcal{M}_{3}\boxtimes \mathcal{N}_{1})_{(x,x)} \simeq   \Irr^{\ast}_{(x,x)}(\mathcal{M}_{3}\boxtimes \mathcal{N}_{1})
$$
Or $\mathcal{M}_{3}$ est localisé en $x$, donc le triangle \eqref{triangle2} donne
$$
\Irr^{\ast}_{(x,x)}(\mathcal{M}_{3}\boxtimes \mathcal{N}_{1}) \overset{\sim}{\longrightarrow}  \mathbf{S}(\mathcal{M}_{3})_{x}\otimes \mathbf{S}(\mathcal{N}_{1})_{x}
$$
Or on a
$$
\mathbf{S}(\mathcal{M}_{3})_{x}\simeq \mathbf{S}(R\mathcal{M}_{3}(\ast x))_{x}=\Irr^{\ast}_{x}(\mathcal{M}_{3})\simeq 0
$$
donc le lemme \ref{lemme utile} est prouvé.
\end{proof}

%\section*{Le diagramme $\Diag(i)$}

%\[
%\xymatrix{
%R\Gamma_{[Z_{1i}]} \mathcal{M}^{\vee}\boxtimes %R\Gamma_{[Z_{1i}]} \mathcal{M}\ar[r] \ar[d]&          R\Gamma_{[Z_{1i-1}]} \mathcal{M}^{\vee}\boxtimes R\Gamma_{[Z_{1i}]} \mathcal{M}\ar[r] \ar[d]&               R\Gamma_{[Z_{1i-1}]} \mathcal{M}^{\vee}( \ast  Z_{i})\boxtimes R\Gamma_{[Z_{1i}]} \mathcal{M}\ar[d] \\
%R\Gamma_{[Z_{1i}]} \mathcal{M}^{\vee}
%\boxtimes R\Gamma_{[Z_{1i-1}]} \mathcal{M}\ar[r] \ar[d]&          R\Gamma_{[Z_{1i-1}]} \mathcal{M}^{\vee}
%\boxtimes R\Gamma_{[Z_{1i-1}]} \mathcal{M} \ar[r] \ar[d]&               R\Gamma_{[Z_{1i-1}]} \mathcal{M}^{\vee}( \ast  Z_{i})
%\boxtimes R\Gamma_{[Z_{1i-1}]} \mathcal{M}\ar[d] \\
%R\Gamma_{[Z_{1i}]} \mathcal{M}^{\vee}
%\boxtimes R\Gamma_{[Z_{1i-1}]} \mathcal{M}( \ast  Z_{i})\ar[r] &          R\Gamma_{[Z_{1i-1}]} \mathcal{M}^{\vee}
%\boxtimes R\Gamma_{[Z_{1i-1}]} \mathcal{M}( \ast  Z_{i}) \ar[r] &               R\Gamma_{[Z_{1i-1}]} \mathcal{M}^{\vee}( \ast  Z_{i})
%\boxtimes R\Gamma_{[Z_{1i-1}]} \mathcal{M}( \ast  Z_{i}) 
%}
%\]
\subsection{}
On se place dans les conditions géométriques du point \ref{notations}, dont on adopte les notations. On a le 
\begin{lemme}\label{concdegré0}
Le complexe $p^{\dag}\delta$ est concentré en degré $0$.
\end{lemme}
\begin{proof}
Si $i: x_0 \hookrightarrow X$ désigne l'inclusion de $x_0$ dans $X$, le module $\delta$ est par définition  $i_+\mathds{C}$.  Le diagramme
$$
\xymatrix{
   \{ x_1,\dots, x_n\} \ar[r]^-{p^{\prime}} \ar[d]^-{i^{\prime}} & x_0 \ar[d]^-{i}\\
   \tilde{C} \ar[r]^-{p}& X
}
$$
étant cartésien, le théorème de changement de base \cite[1.7.3]{HTT} permet de voir qu'on a une identification canonique
$$
p^{\dag}\delta\simeq i_{+}^{\prime}p^{\prime +}\mathds{C}
$$
En particulier le complexe $p^{\dag}\delta$ est concentré en degré $0$.
\end{proof}
%\begin{landscape}

%\section*{Le diagramme $\Diag$}
%$$
%\xymatrix{
%R\Gamma_{[C^{\prime}]} \mathcal{M}^{\vee}\boxtimes
%R\Gamma_{[C^{\prime}]} \mathcal{M} \ar[r] \ar[d]&           R\Gamma_{[C]} \mathcal{M}^{\vee}\boxtimes
%R\Gamma_{[C^{\prime}]} \mathcal{M} \ar[r] \ar[d]&                R(R\Gamma_{[C]} \mathcal{M}^{\vee})(\ast C^{\prime})\boxtimes
%R\Gamma_{[C^{\prime}]} \mathcal{M}  \ar[d] \\
%R\Gamma_{[C^{\prime}]} \mathcal{M}^{\vee}\boxtimes
%R\Gamma_{[C]} \mathcal{M} \ar[r] \ar[d]&          R\Gamma_{[C]} \mathcal{M}^{\vee}\boxtimes
%R\Gamma_{[C]} \mathcal{M}\ar[r] \ar[d]&              R(R\Gamma_{[C]} \mathcal{M}^{\vee})(\ast C^{\prime})\boxtimes
%R\Gamma_{[C]} \mathcal{M}  \ar[d] \\
%R\Gamma_{[C^{\prime}]} \mathcal{M}^{\vee}\boxtimes
%R(R\Gamma_{[C]} \mathcal{M})(\ast C^{\prime})\ar[r] &          R\Gamma_{[C]} \mathcal{M}^{\vee}\boxtimes
%R(R\Gamma_{[C]} \mathcal{M})(\ast C^{\prime})
%\ar[r] &              R(R\Gamma_{[C^{\prime}]} \mathcal{M}^{\vee})(\ast C^{\prime})\boxtimes
%R(R\Gamma_{[C]} \mathcal{M})(\ast C^{\prime})
%}
%$$

\bibliographystyle{amsalpha} 
%on donne le style de bibliographie souhaité%
\bibliography{regcopie}

\providecommand{\bysame}{\leavevmode\hbox to3em{\hrulefill}\thinspace}
\providecommand{\MR}{\relax\ifhmode\unskip\space\fi MR }
% \MRhref is called by the amsart/book/proc definition of \MR.
\providecommand{\MRhref}[2]{%
  \href{http://www.ams.org/mathscinet-getitem?mr=#1}{#2}
}
\providecommand{\href}[2]{#2}
\begin{thebibliography}{{Meb}04}

\bibitem[{Del}70]{Del}
P.~{Deligne}, \emph{Equations différentielles à points singuliers
  réguliers}, Lecture notes in Mathematics, vol. 163, Springer-{V}erlag, 1970.

\bibitem[EK04]{EK}
M.~{Emerton} and M.~{Kisin}, \emph{The {R}iemann-{H}ilbert correspondence for
  unit {F}-crystals}, Astérisque, vol. 293, 2004.

\bibitem[HTT00]{HTT}
R.~{Hotta}, K.~{Takeuchi}, and T.~{Tanisaki}, \emph{$\mathcal{D}$-{M}odules,
  {P}erverse {S}heaves, and {R}epresentation {T}heory}, vol. 236, Birkhauser,
  2000.

\bibitem[{Kas}71]{TheseKashiwara}
M.~{Kashiwara}, \emph{Algebraic study of systems of partial differential
  equations}, Ph.D. thesis, Univ. {T}okyo, 1971.

\bibitem[KK81]{KashKawai}
M.~{Kashiwara} and T.~{Kawai}, \emph{On holonomic systems of micro-differential
  equations}, Publ. {RIMS} \textbf{17} (1981).

\bibitem[{Meb}84]{Mebuneautre}
Z.~{Mebkhout}, \emph{Une autre équivalence de catégories}, Comp. {M}ath.
  \textbf{51} (1984).

\bibitem[{Meb}89]{MS}
\bysame, \emph{Le formalisme des six op{\'e}rations de {G}rothendieck pour les
  $\mathcal{D}$-modules coh{\'e}rents}, vol.~35, Hermann, 1989.

\bibitem[{Meb}90]{Mehbgro}
\bysame, \emph{Le théorème de positivité de l’irrégularité pour les
  $\mathcal{D}_{X}$-modules}, The {G}rothendieck {F}estschrift {III}, vol.~88,
  Birkhäuser, 1990.

\bibitem[{Meb}04]{Mehbsmf}
\bysame, \emph{Le théorème de positivité, le théorème de comparaison et le
  théorème d'existence de {R}iemann}, Éléments de la théorie des systèmes
  différentiels géométriques, Cours du {C.I.M.P.A.}, Séminaires et
  Congrès, vol.~8, SMF, 2004.

\bibitem[MN93]{MN}
Z.~{Mebkhout} and L.~{Narvaez-Macarro}, \emph{Le {T}héorème de
  {C}onstructibilité de {K}ashiwara}, Images directes et constructibilité,
  Travaux en cours, vol.~46, SMF, {Hermann}, 1993.

\bibitem[MT04]{MT}
Ph. {Maisonobe} and T.~{Torrelli}, \emph{Image inverse en théorie des
  $\mathcal{D}$-modules}, S{\'e}minaire et Congr{\`e}s, vol.~8, SMF, 2004.

\bibitem[{Sab}90]{Courssimpa90}
C.~{Sabbah}, \emph{Introduction to algebraic theory of linear systems of
  differential equations}, $\mathcal{D}$-modules cohérents et holonomes
  (Hermann, ed.), no.~45, 1990.

\end{thebibliography}
%on donne le nom du fichier bib dans lequel se trouvent les références%

\end{document}